\renewcommand\eqref[1]{(\ref{#1})} 
\numberwithin{equation}{section}
\theoremstyle{plain}
\newtheorem{thm}{Theorem}[section]
\newtheorem{prop}[thm]{Proposition}
\newtheorem{cor}[thm]{Corollary}
\newtheorem{lem}[thm]{Lemma}
\theoremstyle{definition}
\newtheorem{defn}[thm]{Definition}
\newtheorem{rem}[thm]{Remark}
\renewcommand{\wp}{\mathfrak S}
\newcommand{\Rn}{\mathbb R^{n}}
\newcommand{\G}{\mathbb G}
\newcommand{\N}{\mathbb N}
\def\R{\mathcal R}
\def\R{\mathcal R}
\def\N{\mathcal N}
\def\e[#1]{{\textrm{e}}^{#1}}
\def\G{{\mathbb G}}
\def\I{\mathfrak{I}}
\def\L{\mathfrak{L}}
\begin{document}

   \title[Critical cases of Sobolev inequalities]
 {Critical Gagliardo-Nirenberg, Trudinger, Brezis-Gallouet-Wainger inequalities on graded groups and ground states}

 \author[M. Ruzhansky]{Michael Ruzhansky}
\address{
  Michael Ruzhansky:
 \endgraf
    Department of Mathematics: Analysis, Logic and Discrete Mathematics
  \endgraf
    Ghent University, Belgium
   \endgraf
  and
  \endgraf
  School of Mathematical Sciences
    \endgraf
    Queen Mary University of London, United Kingdom
   \endgraf
  {\it E-mail address} {\rm
michael.ruzhansky@ugent.be}
  }

 \author[N. Yessirkegenov]{Nurgissa Yessirkegenov}  
\address{
  Nurgissa Yessirkegenov:
  \endgraf
  Department of Mathematics: Analysis, Logic and Discrete Mathematics
  \endgraf
    Ghent University, Belgium
  \endgraf
   and
  \endgraf
  Institute of Mathematics and Mathematical Modeling, Kazakhstan
    \endgraf
  {\it E-mail address} {\rm nurgissa.yessirkegenov@ugent.be}
  }

\thanks{This research is funded by the Science Committee of the Ministry of Education and Science of the Republic of Kazakhstan (Grant No. AP09058474) and by the FWO Odysseus 1 grant G.0H94.18N: Analysis and Partial Differential Equations. Michael Ruzhansky was supported by the EPSRC grant EP/R003025/2 and by the Methusalem programme of the Ghent University Special
Research Fund (BOF) (Grant number 01M01021).
}

     \keywords{Trudinger inequality, Gagliardo-Nirenberg inequality, Sobolev inequality, Rockland operator, graded Lie group, stratified Lie group, sub-Laplacian.}
     \subjclass[2010]{35J35, 35G20, 22E30, 43A80}

     \begin{abstract} In this paper we investigate critical Gagliardo-Nirenberg, Trudinger-type and Brezis-Gallouet-Wainger inequalities associated with the positive Rockland operators on graded Lie groups, which includes the cases of $\mathbb R^n$, Heisenberg, and general stratified Lie groups. As an application, using the critical Gagliardo-Nirenberg inequality, the existence of least energy solutions of nonlinear Schr\"{o}dinger type equations is obtained. We also express the best constant in the critical Gagliardo-Nirenberg and Trudinger inequalities in the variational form as well as in terms of the ground state solutions of the corresponding nonlinear subelliptic equations. The obtained results are already new in the setting of general stratified Lie groups (homogeneous Carnot groups). Among new technical methods, we also extend Folland's analysis of H\"older spaces from stratified Lie groups to general homogeneous Lie groups.
     \end{abstract}
     \maketitle

       \tableofcontents

       \section{Introduction}
\label{SEC:intro}
Consider the following Trudinger-Moser inequality
\begin{equation}\label{intro:trud}
\int_{\Rn}(\exp(\alpha|f(x)|^{\frac{p}{p-1}})-1)dx\leq C,\;\;1<p<\infty,
\end{equation}
for $f\in L^{p}_{n/p}(\Rn)=(1-\triangle)^{-n/2p}L^{p}(\Rn)$ with $\|f\|_{L^{p}_{n/p}}\leq1$ and for some positive constants $C$ and $\alpha$. This inequality has been generalised in many directions. In bounded domains of $\Rn$ with $p=n\geq2$, we refer to \cite{M79}, \cite{A88}, \cite{CC86}, \cite{F92}, \cite{MP89}, \cite{T67} for the finding of the best exponents in \eqref{intro:trud}, and to \cite{AS07} for the singular version of this inequality. In unbounded domains, we refer to \cite{A88}, \cite{A75}, \cite{OO91}, \cite{Oz95}, \cite{S72}, \cite{Og90}, \cite{AT99}, \cite{Oz95} for Sobolev spaces of fractional order and higher order.

{In \cite{LL13}, the authors developed a rearrangement-free argument without using symmetrization to establish
the Trudinger-Moser inequalities in the unbounded space $\Rn$ including Adams type inequalities
on the higher order derivatives (and fractional order derivatives). Rearrangement fails on the
Heisenberg group or higher order Sobolev spaces. In \cite{LL13} the authors avoid such a rearrangement which
is only available in the first order in the Euclidean spaces. We also refer to \cite{Y14} for a rearrangement free argument on the Heisenberg group, where the author obtained the Trudinger–Moser inequalities when $p=Q$ by gluing local estimates with the cut-off functions. On the Heisenberg group, we also refer to \cite{CL01}, \cite{LLT12} for an analogue of inequality \eqref{intro:trud} on domains of finite measure, and to \cite{LL12}, \cite{LLT14}, \cite{CNGY12} and \cite{Y12} on the entire Heisenberg group as well as to \cite{LL13}, \cite{RY19_Japan} and \cite{RY19_survey} on stratified (Lie) groups. We also refer to \cite{LLZ18} for the results of concentration-compactness type on the Heisenberg group and beyond using the level set argument.}

In this paper, we are interested in obtaining such inequalities on graded Lie groups. We use the strategy developed in \cite{Oz95} and \cite{Oz97} on $\Rn$.

A connected simply connected Lie group $\G$ is called a graded (Lie) group if its Lie algebra admits a gradation. The graded groups form the subclass of homogeneous nilpotent Lie groups admitting homogeneous hypoelliptic left-invariant differential operators (\cite{Miller:80}, \cite{tER:97}, see also a discussion in \cite[Section 4.1]{FR16}). These operators are called Rockland operators from the Rockland conjecture, solved by Helffer and Nourrigat \cite{HN-79}. So, we understand by a Rockland operator {\em any left-invariant homogeneous hypoelliptic differential operator on} $\G$.

In this paper, we are interested in obtaining the inequality \eqref{intro:trud} associated with positive Rockland operators on graded groups. We are also interested to obtain critical Gagliardo-Nirenberg and Brezis-Gallouet-Wainger inequalities. Consequently, we give applications of these inequalities to the nonlinear subelliptic equations. As such, this is essentially the most general framework for such inequalities in the setting of nilpotent Lie groups. Indeed, if a nilpotent Lie group has a left-invariant hypoelliptic differential operator, then the group is graded, see Section \ref{SEC:prelim} for definitions and some details.

From now on we let $\mathcal{R}$ be a positive Rockland operator, that is, a positive left-invariant homogeneous
hypoelliptic invariant differential operator on $\mathbb{G}$ of homogeneous degree $\nu$.
Its powers $\R^{a}$ for any $a>0$ are understood through the functional calculus on the whole of $\G$, extensively analysed in \cite{FR16,FR:Sobolev}. We denote the Sobolev space by $L^{p}_{a}(\G)=L^{p}_{a, \R}(\G)$, for $a>0$, defined by the norm
\begin{equation}\label{norm}
\|u\|_{L^{p}_{a,\R}(\G)}:=\left(\int_{\G}(|\R^{a/\nu}u(x)|^{p}+|u(x)|^{p})dx\right)^{1/p}.
\end{equation}
We refer to \cite[Theorem 4.4.20]{FR16} for the independence of the spaces $L^{p}_{a}(\G)$ of a particular choice of the Rockland operator $\R$.

Thus, in this paper we will show that for a graded group $\mathbb{G}$ of homogeneous dimension $Q$ and for a positive Rockland operator $\mathcal{R}$ of homogeneous degree $\nu$ we have the following results:

\begin{itemize}
\item {\bf (Critical Gagliardo-Nirenberg inequality)} Let $1<p<\infty$. Then we have
\begin{equation}\label{intro:crit_GN_ineq}
\|f\|_{L^{q}(\G)}\leq C_{1}q^{1-1/p}\|\R^{\frac{Q}{\nu p}}f\|_{L^{p}(\G)}^{1-p/q}\|f\|_{L^{p}(\G)}^{p/q}
\end{equation}
for any $q$ with $p\leq q<\infty$ and for any function $f$ from the Sobolev space $L_{Q/p}^{p}(\G)$ on graded group $\G$, where the constant $C_{1}$ depends only on $p$ and $Q$.

\item {\bf (Trudinger inequality with remainders)} Let $1<p<\infty$. Then there exist positive $\alpha$ and $C_{2}$ such that
\begin{equation}\label{intro:Trud_ineq}
\int_{\G}(\exp(\alpha|f(x)|^{p'})-\sum_{0\leq k<p-1, \;k\in\mathbb{N}}\frac{1}{k!}(\alpha|f(x)|^{p'})^{k})dx\leq C_{2}\|f\|^{p}_{L^{p}(\G)}
\end{equation}
holds for any function $f\in L_{Q/p}^{p}(\G)$ with $\|\R^{\frac{Q}{\nu p}}f\|_{L^{p}(\G)}\leq1$, where $1/p+1/p'=1$. Furthermore, we show that \eqref{intro:crit_GN_ineq} and \eqref{intro:Trud_ineq} are actually equivalent and give the relation between their best constants. In \cite{RY18_hypo} using this result we obtained weighted versions of \eqref{intro:Trud_ineq} on graded groups. {In the case $p=Q$, for the best constant $\alpha$ in the weighted Trudinger-Moser inequalities we refer to \cite[Theorem 1.6]{LL12} on the Heisenberg group and to \cite[Theorem G]{LL13} on general stratified groups when $\left[\int_{\G}\left|\nabla_{H} u(\xi)\right|^{Q} d \xi+\tau \int_{\G}|u(\xi)|^{Q} d \xi\right]^{1 / Q}\leq 1$ for any fixed positive real number $\tau$, and to \cite[Theorem 1.1]{LLT14} on the Heisenberg group when $\left\|\nabla_{H} u\right\|_{L^{Q}(\mathbb{H}_{n})} \leq 1$.}

\item {\bf (Brezis-Gallouet-Wainger inequality)}   Let $a,p,q\in \mathbb{R}$ with $1<p,q<\infty$ and $a>Q/q$. Then we have
\begin{equation}\label{intro:BGW1}
\|f\|_{L^{\infty}}\leq C_{3}(1+\log(1+\|\R^{a/\nu} f\|_{L^{q}(\G)}))^{1/p'}
\end{equation}
for any function $f\in L^{p}_{Q/p}(\G)\cap L^{q}_{a}(\G)$ with $\|f\|_{L^{p}_{Q/p}(\G)}\leq1$.

\item {\bf (Existence of ground state solutions)} Let $1<p<q<\infty$. Then the Schr\"{o}dinger type equation \eqref{nonlinear} has a least energy solution $\phi\in L^{p}_{Q/p}(\mathbb{G})$.

Furthermore, we have $d=\L(\phi)$, for the variational problem \eqref{L}-\eqref{d}.

The nonlinear equation \eqref{nonlinear} mentioned above appears naturally in the analysis of the best constants for the above inequalities:

\smallskip
\item {\bf (Best constants in critical Gagliardo-Nirenberg inequality)} Let $1<p<q<\infty$. Let $\phi$ be a least energy solution of \eqref{nonlinear} and let $C_{GN, \R}$ be the smallest positive constant of $C_{1}$ in \eqref{intro:crit_GN_ineq}. Then we have
$$C_{GN, \R}=q^{-q+q/p}\frac{q}{p}\left(\frac{q-p}{p}\right)^{\frac{p-q}{p}}
\|\phi\|_{L^{p}(\mathbb{G})}^{p-q}$$
\begin{equation}\label{intro:sharp1} =
q^{-q+q/p}\frac{q}{p}\left(\frac{q-p}{p}\right)^{\frac{p-q}{p}}
\left(\frac{p^{2}}{q-p}d\right)^{\frac{p-q}{p}}.
\end{equation}
Since \eqref{intro:crit_GN_ineq} and \eqref{intro:Trud_ineq} are equivalent with a relation between constants, this also gives the best constant in Trudinger inequalities.
\end{itemize}

{\em We note that the above results are already new if $\mathbb{G}$ is a stratified group and $\R$ is the (positive) sub-Laplacian on $\mathbb{G}$ (so that also $\nu=2$).}

\smallskip
The paper is organised as follows. In Section \ref{SEC:prelim} we briefly recall main concepts of graded groups and fix the notation. The critical Gagliardo-Nirenberg inequality and Trudinger-type inequality \eqref{intro:trud} are obtained on graded groups in Section \ref{SEC:Trudinger}, where the constant $C$ is given more explicitly. In Section \ref{SEC:Brezis}, we prove the Brezis-Gallouet-Wainger inequalities on graded groups. Finally, applications are given to the nonlinear Schr\"{o}dinger type equations in Section \ref{SEC:applications}.

\section{Preliminaries}
\label{SEC:prelim}

Following Folland and Stein \cite[Chapter 1]{FS-book} and the recent exposition in \cite[Chapter 3]{FR16} we recall that $\mathbb{G}$ is a graded (Lie) group if its Lie algebra $\mathfrak{g}$ admits a gradation
$$\mathfrak{g}=\bigoplus_{\ell=1}^{\infty}\mathfrak{g}_{\ell},$$
where the $\mathfrak{g}_{\ell}$, $\ell=1,2,...,$ are vector subspaces of $\mathfrak{g}$, all but finitely many equal to $\{0\}$, and satisfying
$$[\mathfrak{g}_{\ell},\mathfrak{g}_{\ell'}]\subset \mathfrak{g}_{\ell+\ell'} \;\;\forall \ell, \ell'\in \mathbb{N}.$$

It is called stratified if $\mathfrak{g}_{1}$ generates the whole of $\mathfrak{g}$ through these commutators.

We fix a basis $\{X_{1},\ldots,X_{n}\}$ of a Lie algebra $\mathfrak{g}$ adapted to the gradation. By the exponential mapping $\exp_{\mathbb{G}}:\mathfrak{g}\rightarrow\mathbb{G}$ we get points in $\mathbb{G}$:
$$x=\exp_{\mathbb{G}}(x_{1}X_{1}+\ldots+x_{n}X_{n}).$$
A family of linear mappings of the form
$$D_{r}={\rm Exp}(A \,{\rm ln}r)=\sum_{k=0}^{\infty}
\frac{1}{k!}({\rm ln}(r) A)^{k}$$
is a family of dilations of $\mathfrak{g}$. Here $A$ is a diagonalisable linear operator on $\mathfrak{g}$ with positive eigenvalues. Every $D_{r}$ is a morphism of the Lie algebra $\mathfrak{g}$, i.e., $D_{r}$ is a linear mapping from $\mathfrak{g}$ to itself with the property
$$\forall X,Y\in \mathfrak{g},\, r>0,\;
[D_{r}X, D_{r}Y]=D_{r}[X,Y],$$
as usual $[X,Y]:=XY-YX$ is the Lie bracket. One can extend these dilations through the exponential mapping to the group $\G$ by
\begin{equation}\label{dil_weight}
D_{r}(x)=rx:=(r^{\nu_{1}}x_{1},\ldots,r^{\nu_{n}}x_{n}), \;\;x=(x_{1},\ldots,x_{n})\in\mathbb{G},\;\;r>0,
\end{equation}
where $\nu_{1},\ldots,\nu_{n}$ are weights of the dilations. The sum of these weights
$$
Q:={\rm Tr}\, A=\nu_1+\cdots+\nu_n
$$
is called the homogeneous dimension of $\G$. We also recall that the standard Lebesgue measure $dx$ on $\mathbb{R}^{n}$ is the Haar measure for $\mathbb{G}$ (see, e.g. \cite[Proposition 1.6.6]{FR16}). A {homogeneous quasi-norm} on $\mathbb G$ is
a continuous non-negative function
$$\mathbb{G}\ni x\mapsto |x|\in [0,\infty)$$
with the properties:
\begin{itemize}
\item   $|x^{-1}| = |x|$ for any $x\in \mathbb{G}$,
\item  $|\lambda x|=\lambda |x|$ for any
$x\in \mathbb{G}$ and $\lambda >0$,
\item  $|x|= 0$ if and only if $x=0$.
\end{itemize}

We will use the following polar decomposition for our analysis: there is a (unique) positive Borel measure $\sigma$ on the unit sphere
\begin{equation}\label{EQ:sphere}
\wp:=\{x\in \mathbb{G}:\,|x|=1\},
\end{equation}
such that for any function $f\in L^{1}(\mathbb{G})$ we have
\begin{equation}\label{EQ:polar}
\int_{\mathbb{G}}f(x)dx=\int_{0}^{\infty}
\int_{\wp}f(ry)r^{Q-1}d\sigma(y)dr.
\end{equation}

Let $\widehat{\mathbb{G}}$ be a unitary dual of $\mathbb{G}$ and let $\mathcal{H}_{\pi}^{\infty}$ be the space of smooth vectors for a representation $\pi\in\widehat{\mathbb{G}}$. A Rockland operator $\mathcal{R}$ on $\mathbb{G}$ is a left-invariant differential operator which is homogeneous of positive degree and satisfies the condition:

({\bf Rockland condition}) for each representation $\pi\in\widehat{\mathbb{G}}$, except for the trivial representation, the operator $\pi(\R)$ is injective on $\mathcal{H}_{\pi}^{\infty}$, i.e.,
$$\forall \upsilon \in \mathcal{H}_{\pi}^{\infty}, \;\;\pi(\R)\upsilon=0\Rightarrow \upsilon=0.$$
Here $\pi(\R):=d\pi(\R)$ is the infinitesimal representation of the Rockland operator $\R$ as of an element of the universal enveloping algebra of $\G$.

Different characterisations of such operators have been obtained by Rockland \cite{Rockland} and Beals \cite{Beals-Rockland}. For an extensive presentation about Rockland operators and for the theory of Sobolev spaces on graded groups we refer to \cite{FR:Sobolev} and \cite[Chapter 4]{FR16}, and for the Besov spaces on graded groups we refer to \cite{CR17}.

Since we will not be using the representation theoretic interpretation of these operators in this paper, we define {\em Rockland operators as left-invariant homogeneous hypoelliptic differential operators on $\G$.} This is equivalent to the Rockland condition as it was shown by Helffer and Nourrigat in \cite{HN-79}.

The homogeneous and inhomogeneous Sobolev spaces $\dot{L}^{p}_{a}(\mathbb{G})$ and ${L}^{p}_{a}(\mathbb{G})$ based on the positive left-invariant hypoelliptic differential operator $\R$ have been extensively analysed in \cite{FR:Sobolev} and \cite[Section 4.4]{FR16} to which we refer for the details of their properties. They generalise the Sobolev spaces based on the sub-Laplacian on stratified groups analysed by Folland in \cite{F75}. We refer to the above papers for (non-critical) Sobolev inequalities in the setting of graded groups, and to \cite{RTY17} to the determination of the best constants in non-critical Sobolev and Gagliardo-Nirenberg inequalities on graded groups.

\section{Critical Gagliardo-Nirenberg and Trudinger inequalities}
\label{SEC:Trudinger}
We recall the following Gagliardo-Nirenberg inequality (see \cite[Theorem 3.2]{RTY17}): Let $a\geq0$, $1<p<\frac{Q}{a}$ and $p\leq q\leq\frac{pQ}{Q-ap}$. Then we have for all functions $u$ from the homogeneous Sobolev space $\dot{L}^{p}_{a}(\mathbb{G})$ on the graded group $\G$:
\begin{equation}\label{GN1}
\int_{\mathbb{G}}|u(x)|^{q}dx\leq C \left(\int_{\mathbb{G}}|\mathcal{R}^{\frac{a}{\nu}}u(x)|^{p}dx\right)^{\frac{Q(q-p)}{ap^{2}}}
\left(\int_{\mathbb{G}}|u(x)|^{p}dx\right)^{\frac{apq-Q(q-p)}{ap^{2}}}.
\end{equation}

In this section, we show this inequality for $a=Q/p$, which can be viewed as a critical Gagliardo-Nirenberg inequality. Then, we prove Trudinger-type inequality on graded groups, and we show the equivalence of these two inequalities. We note that another version of the Gagliardo-Nirenberg inequalities was also given in \cite{BFKG-graded}.

In order to prove the critical Gagliardo-Nirenberg inequality, we need to recall the following results.

\begin{thm}[{\cite[Theorem 4.4.28, Part 6]{FR16}}]
\label{Sob-emb-thm}
Let $\mathbb{G}$ be a graded Lie group of homogeneous dimension $Q$. Let $1<p<\infty$ and $a,b,c\in\mathbb{R}$ with $a<c<b$. Then we have
\begin{equation}\label{Th_Sob_emb1} \|f\|_{\dot{L}^{p}_{c}(\G)}\leq C\|f\|_{\dot{L}^{p}_{a}(\G)}^{1-\theta}\|f\|_{\dot{L}^{p}_{b}(\G)}^{\theta}
\end{equation}
for any function $f\in \dot{L}^{p}_{b}(\G)$, where $\theta:=(c-a)/(b-a)$ and the constant $C$ depends only on $a$, $b$ and $c$.
\end{thm}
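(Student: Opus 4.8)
The plan is to reduce \eqref{Th_Sob_emb1} to a two-endpoint interpolation inequality for the fractional powers of $\R$ and then to settle that by complex interpolation. First I would use the fact that the homogeneous Sobolev norm is, up to equivalence of norms, given by $\|f\|_{\dot{L}^{p}_{s}(\G)}\approx\|\R^{s/\nu}f\|_{L^{p}(\G)}$, so that \eqref{Th_Sob_emb1} is equivalent to
$$\|\R^{c/\nu}f\|_{L^{p}(\G)}\leq C\|\R^{a/\nu}f\|_{L^{p}(\G)}^{1-\theta}\|\R^{b/\nu}f\|_{L^{p}(\G)}^{\theta}.$$
Writing $h=\R^{a/\nu}f$ and $\delta=(b-a)/\nu>0$, and noting that $c/\nu-a/\nu=\theta\delta$ since $\theta=(c-a)/(b-a)$, this collapses to the single statement
$$\|\R^{\theta\delta}h\|_{L^{p}(\G)}\leq C\|h\|_{L^{p}(\G)}^{1-\theta}\|\R^{\delta}h\|_{L^{p}(\G)}^{\theta},\qquad 0<\theta<1,$$
that is, to the endpoint case in which the lower-order space is $L^{p}(\G)$ itself.

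To prove this reduced inequality I would run Stein's complex interpolation for the analytic family $T_{z}:=\R^{z\delta}$ on the strip $0\leq\operatorname{Re}z\leq1$. On the vertical line $\operatorname{Re}z=0$ one has $T_{iy}=\R^{iy\delta}$, a purely imaginary power of the positive Rockland operator, while on $\operatorname{Re}z=1$ one has $T_{1+iy}h=\R^{iy\delta}(\R^{\delta}h)$. The decisive input is therefore the boundedness of the imaginary powers $\R^{i\tau}$ on $L^{p}(\G)$ for $1<p<\infty$, together with the polynomial bound $\|\R^{i\tau}\|_{L^{p}\to L^{p}}\leq C(1+|\tau|)^{N}$; such estimates follow from the Mihlin--H\"ormander type spectral multiplier theorems available for Rockland operators in the calculus of \cite{FR16}, since the symbol $\lambda\mapsto\lambda^{i\tau}$ satisfies the Mihlin conditions with constants growing polynomially in $\tau$. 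Evaluating the interpolated family at $z=\theta$ then yields exactly the claimed bound with the correct exponents $1-\theta$ and $\theta$.

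To make the interpolation rigorous I would fix a dense class (for instance $h$ with $\R^{\delta}h\in L^{p}(\G)$, or Schwartz vectors), pair against $\psi\in L^{p'}(\G)$ with $\|\psi\|_{L^{p'}(\G)}=1$, and apply the three-lines theorem to the scalar function $z\mapsto e^{\varepsilon z^{2}}\langle \R^{z\delta}h,\psi\rangle$. The Gaussian factor $e^{\varepsilon z^{2}}$ decays like $e^{-\varepsilon(\operatorname{Im}z)^{2}}$ on the strip and so absorbs the polynomial growth of the imaginary-power norms, guaranteeing the boundedness and subexponential-growth hypotheses of the three-lines theorem; letting $\varepsilon\to0$ after optimising recovers the inequality with a constant $C$ depending only on $a,b,c$ (through $\theta$ and $\delta$) and $p$. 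Taking the supremum over $\psi$ returns the $L^{p}$ norm on the left-hand side.

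The main obstacle is precisely the imaginary-power estimate and the verification that its growth in $\tau$ is compatible with the three-lines theorem: boundedness of $\R^{i\tau}$ on $L^{p}(\G)$ is not elementary and rests on the full multiplier calculus for Rockland operators on graded groups. Once this is granted, the analyticity of $z\mapsto\R^{z\delta}$ on the strip and its continuity up to the boundary are routine consequences of the functional calculus, and the remainder is the standard Hadamard manipulation. Alternatively one could quote directly that $\{\dot{L}^{p}_{s}(\G)\}_{s}$ is a complex interpolation scale, $[\dot{L}^{p}_{a}(\G),\dot{L}^{p}_{b}(\G)]_{\theta}=\dot{L}^{p}_{c}(\G)$, so that \eqref{Th_Sob_emb1} becomes immediate from abstract interpolation theory; but the genuine content is then hidden in exactly the same imaginary-power bounds.
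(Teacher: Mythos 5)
You should note at the outset that the paper contains no proof of this statement at all: it is imported verbatim from \cite[Theorem 4.4.28, Part 6]{FR16}, so your proposal can only be measured against the proof in that source. Your strategy is essentially the same as the one used there: write the homogeneous Sobolev norms as $\|\R^{s/\nu}f\|_{L^{p}(\G)}$ (in \cite{FR16} this is the \emph{definition} of the norm, so no equivalence is needed), substitute $h=\R^{a/\nu}f$ to reduce to the moment inequality $\|\R^{\theta\delta}h\|_{L^{p}}\leq C\|h\|_{L^{p}}^{1-\theta}\|\R^{\delta}h\|_{L^{p}}^{\theta}$, and prove this by analytic interpolation, the decisive input being that the imaginary powers $\R^{i\tau}$ are bounded on $L^{p}(\G)$, $1<p<\infty$, with norm growing at most polynomially in $\tau$. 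One correction of attribution: \cite{FR16} does not obtain this input from a Mihlin--H\"ormander spectral multiplier theorem (no such theorem for Rockland operators is developed in that book); it proves the imaginary-power bound by showing that $\R^{i\tau}$ is a singular integral operator whose kernel is a kernel of type $i\nu\tau$ and then applying Calder\'on--Zygmund theory, tracking the constants to get the polynomial growth. Since the bound you need is available in \cite{FR16} (Section 4.3, imaginary powers and Riesz kernels), this does not affect the soundness of your plan, but you should cite that result rather than a multiplier theorem.

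The one step in your write-up that would fail as stated is the limit $\varepsilon\to0$ in the three-lines argument. With boundary majorants of size $(1+|y|)^{N}$, the damped suprema $\sup_{y}e^{-\varepsilon y^{2}}(1+|y|)^{N}$ blow up (roughly like $\varepsilon^{-N/2}$) as $\varepsilon\to0$, so ``letting $\varepsilon\to0$ after optimising'' does not return a finite constant: the naive Gaussian trick only works when the boundary bounds are uniform in $\mathrm{Im}\,z$. The fix is immediate, however. The theorem claims only \emph{some} constant depending on $a,b,c$ (and $p$), so you may simply keep $\varepsilon$ fixed, say $\varepsilon=1$, and accept a constant depending on $N$, $\theta$ and $\delta$; alternatively, invoke the three-lines theorem for families of admissible growth (Hirschman's lemma, underlying Stein's interpolation theorem), in which polynomially growing boundary bounds are integrated logarithmically against the exponentially decaying Poisson kernel of the strip. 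You should also be aware of a more elementary route that bypasses imaginary powers and complex interpolation entirely: the equiboundedness of the heat semigroup $e^{-t\R}$ on $L^{p}(\G)$ makes $\R$ a non-negative (sectorial) operator in Komatsu's sense, and the moment inequality for fractional powers of such operators follows from the Balakrishnan integral representation by splitting the integral at $t=\|\R^{\delta}h\|_{L^{p}}/\|h\|_{L^{p}}$; this uses only the abstract fractional-power theory that \cite{FR16} develops in any case, and it yields the constant without any harmonic analysis.
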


We will also need the following statement where we refer to Definition \ref{DEF:hom} for the precise definition of the operators of type $\nu$.

\begin{cor}[{\cite[Corollary 3.2.32]{FR16}}]
\label{int_op_cor}
Let $\G$ be a homogeneous Lie group and let $\nu$ be a complex number such that $0\leq{\rm Re}\nu<Q$. Then, all operators of type $\nu$ on $\G$ are $(-\nu)$-homogeneous and extend to a bounded operator from $L^{p}(\G)$ to $L^{q}(\G)$ provided that $1/p-1/q={\rm Re}\nu/Q$ for $1<p\leq q<\infty$.
\end{cor}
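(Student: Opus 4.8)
The plan is to prove the two assertions of the corollary separately: first the claimed $(-\nu)$-homogeneity of the operator, and then its $L^{p}$--$L^{q}$ mapping property, which amounts to a Hardy--Littlewood--Sobolev inequality on $\G$. Throughout I would use the realisation of an operator $T$ of type $\nu$ as a group convolution $Tf=f\s K$, where the kernel $K$ is smooth away from the origin and homogeneous of degree $\nu-Q$, that is $K(D_{r}x)=r^{\nu-Q}K(x)$ for all $r>0$.

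For the homogeneity, write $\delta_{r}f:=f\circ D_{r}$ and recall that $D_{r}$ is an automorphism of $\G$, so it commutes with inversion and is multiplicative, while its Jacobian equals $r^{Q}$. Substituting $z=D_{r}y$ in $T(\delta_{r}f)(x)=\int_{\G}f(D_{r}y)K(y^{-1}x)\,dy$, using $(D_{1/r}z)^{-1}=D_{1/r}(z^{-1})$ and $x=D_{1/r}(D_{r}x)$ to collect the dilation inside a single argument, and then applying the homogeneity of $K$ in the form $K(D_{1/r}w)=r^{Q-\nu}K(w)$, one obtains $T(\delta_{r}f)=r^{-\nu}\,\delta_{r}(Tf)$. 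This is exactly the statement that $T$ is $(-\nu)$-homogeneous; the computation is routine once the homogeneity of $K$ is invoked.

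For the boundedness, assume first $0<{\rm Re}\,\nu<Q$. Since $K$ is homogeneous of degree $\nu-Q$ and smooth, hence bounded, on the unit sphere $\wp$, evaluating the homogeneity relation at $r=|x|^{-1}$ gives the pointwise bound $|K(x)|\leq C|x|^{{\rm Re}\,\nu-Q}$, whence $|Tf(x)|\leq C\,(|f|\s|\cdot|^{{\rm Re}\,\nu-Q})(x)$. It therefore suffices to show that the Riesz-type potential $I_{\alpha}g:=|g|\s|\cdot|^{\alpha-Q}$, with $\alpha:={\rm Re}\,\nu$, maps $L^{p}(\G)$ boundedly into $L^{q}(\G)$ whenever $1/p-1/q=\alpha/Q$ and $1<p\leq q<\infty$. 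The key step is Hedberg's inequality: splitting the convolution integral into the regions $|y^{-1}x|<R$ and $|y^{-1}x|\geq R$, estimating the first by the Hardy--Littlewood maximal function $Mf(x)$ through a dyadic decomposition and the polar decomposition \eqref{EQ:polar}, and the second by H\"older's inequality (the exterior integral converging precisely because $\alpha<Q/p$, which holds since $q<\infty$), one gets $I_{\alpha}f(x)\leq C\bigl(R^{\alpha}Mf(x)+R^{\alpha-Q/p}\|f\|_{L^{p}(\G)}\bigr)$. Optimising in $R>0$ yields $I_{\alpha}f(x)\leq C\,(Mf(x))^{1-\alpha p/Q}\|f\|_{L^{p}(\G)}^{\alpha p/Q}$. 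Because the scaling relation forces $q(1-\alpha p/Q)=p$, raising to the power $q$, integrating, and invoking the $L^{p}$-boundedness of $M$ (valid on $\G$ as a space of homogeneous type) gives $\|I_{\alpha}f\|_{L^{q}(\G)}\leq C\|f\|_{L^{p}(\G)}$.

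The remaining case ${\rm Re}\,\nu=0$, which forces $p=q$, is of a genuinely different nature: the kernel is no longer locally integrable and the pointwise domination by a positive fractional integral breaks down. Here I would instead invoke Calder\'on--Zygmund theory on homogeneous groups, using the smoothness and the homogeneity of degree $-Q$ of $K$ together with the cancellation condition built into the definition of a type-$0$ kernel to verify the Hörmander regularity estimates, and then conclude $L^{p}$-boundedness for $1<p<\infty$ by the standard weak-$(1,1)$ bound followed by interpolation and duality. I expect this borderline case to be the main obstacle, since it requires the full singular-integral machinery rather than the soft maximal-function argument that settles $0<{\rm Re}\,\nu<Q$.
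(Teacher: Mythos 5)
First, a point of reference: the paper does not prove this statement at all --- it is quoted verbatim from \cite[Corollary 3.2.32]{FR16}, so there is no internal proof to compare against; the closest argument actually carried out in the paper is the self-contained weak-type/Marcinkiewicz treatment of the Riesz potential inside the proof of Theorem \ref{crit_GN_thm}, which is done there in order to track the explicit growth in $q$ of the constants. Your treatment of the regime $0<{\rm Re}\,\nu<Q$ is correct and is a genuinely different, softer route: the change of variables giving $T(f\circ D_{r})=r^{-\nu}(Tf)\circ D_{r}$ is exactly right; the pointwise bound $|K(x)|\leq C|x|^{{\rm Re}\,\nu-Q}$ is legitimate because $K$ is smooth (hence bounded) on the compact sphere $\wp$ and, for $\nu\neq 0$, a homogeneous distribution of degree $\nu-Q$ cannot carry a component supported at the origin; and Hedberg's inequality together with the $L^{p}(\G)$-boundedness of the Hardy--Littlewood maximal function (valid on any homogeneous group) yields the Hardy--Littlewood--Sobolev bound, with the exponent bookkeeping $q(1-\alpha p/Q)=p$ checking out. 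Where the cited source and the paper's own Riesz-potential argument interpolate between weak-type endpoints, your maximal-function argument gives boundedness with no control of the constant's dependence on $q$ --- which is fine for the corollary as stated, though it is precisely what Theorem \ref{crit_GN_thm} cannot afford to lose.

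The case ${\rm Re}\,\nu=0$, however, contains a genuine wiring error. The H\"ormander regularity estimate needs no cancellation at all: for $K$ smooth away from the origin and homogeneous of degree $-Q$, inequality \eqref{Fol_prop2_ineq1} of Proposition \ref{Fol_prop2} gives $|K(xy)-K(x)|\leq C|y||x|^{-Q-1}$ for $|y|\leq\frac{1}{2}|x|$, and integrating this over $|x|\geq 2|y|$ via \eqref{EQ:polar} produces a finite bound independent of $y$ --- smoothness plus homogeneity suffice. The cancellation condition is needed somewhere else entirely: it is what makes $T$ bounded on $L^{2}(\G)$, and without an a priori $L^{2}$ (or some $L^{p_{0}}$) bound the Calder\'on--Zygmund machine cannot start, since the weak-$(1,1)$ estimate is derived from the $L^{2}$ bound \emph{plus} the H\"ormander condition, not from the H\"ormander condition alone; interpolation and duality then propagate it. On $\Rn$ this $L^{2}$ input is immediate from Plancherel, because the Fourier transform of a smooth homogeneous kernel with vanishing mean is a bounded function; on a non-abelian homogeneous group there is no scalar Fourier-multiplier shortcut, and the $L^{2}$-boundedness of type-$0$ operators is itself a nontrivial theorem, proved via Cotlar--Stein almost-orthogonality --- this is precisely the hard content of the results in \cite{FR16} and \cite{FS-book} that the corollary is quoted from. (A minor additional point: for $\nu=0$ exactly, the kernel may also contain a multiple of the Dirac delta, which is harmless but should be acknowledged.) So, as you anticipated, the borderline case is the real obstacle, but the obstacle sits in the $L^{2}$ estimate, which your outline never establishes, rather than in the regularity estimates you propose to verify.
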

Now let us state the critical Gagliardo-Nirenberg inequality.
\begin{thm}\label{crit_GN_thm}
Let $\mathbb{G}$ be a graded Lie group of homogeneous dimension $Q$ and let $\mathcal{R}$ be a positive Rockland operator of homogeneous degree $\nu$. Let $1<p<\infty$. Then we have
\begin{equation}\label{crit_GN_ineq}
\|f\|_{L^{q}(\G)}\leq C_{1}q^{1-1/p}\|\R^{\frac{Q}{\nu p}}f\|_{L^{p}(\G)}^{1-p/q}\|f\|_{L^{p}(\G)}^{p/q}
\end{equation}
for every $q$ with $p\leq q<\infty$ and for every function $f\in L_{Q/p}^{p}(\G)$, where the constant $C_{1}$ depends only on $p$ and $Q$.
\end{thm}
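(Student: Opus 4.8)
The plan is to reduce the critical inequality to a quantitative Sobolev estimate at the ``first'' subcritical order and then restore the critical derivative by interpolation. Fix $q$ with $p<q<\infty$ (the case $q=p$ being trivial) and set
$$a:=\frac{Q}{p}-\frac{Q}{q}=Q\left(\frac1p-\frac1q\right)\in\left(0,\tfrac{Q}{p}\right),$$
so that $\frac1p-\frac1q=\frac{a}{Q}$. Writing $g:=\R^{a/\nu}f$, the Riesz potential $\R^{-a/\nu}$ is an operator of type $a$ in the sense of Definition \ref{DEF:hom}; hence by Corollary \ref{int_op_cor} (with $\mathrm{Re}\,\nu=a$) it maps $L^{p}(\G)$ to $L^{q}(\G)$, and $f=\R^{-a/\nu}g$ is given by convolution against a kernel $h_{a}$ which is homogeneous of degree $a-Q$ and smooth away from the origin, so $|h_{a}(x)|\le C_{p,Q}|x|^{a-Q}$ with $C_{p,Q}$ uniform for $a$ in the compact range $[0,Q/p]$. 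The two ingredients to establish are (i) the quantitative bound $\|f\|_{L^{q}}\le C\,q^{1-1/p}\|g\|_{L^{p}}$ with $C=C(p,Q)$, and (ii) the interpolation bound $\|g\|_{L^{p}}\le C\|f\|_{L^{p}}^{p/q}\|\R^{Q/(\nu p)}f\|_{L^{p}}^{1-p/q}$; multiplying these yields \eqref{crit_GN_ineq}.

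For (i) I would estimate $|f(x)|\le C_{p,Q}\int_{\G}|y^{-1}x|^{a-Q}|g(y)|\,dy$ and split the integral at a radius $R>0$ to be optimised pointwise. On the near region $|y^{-1}x|\le R$, a dyadic decomposition together with $\int_{|y^{-1}x|\le\rho}|g|\le C\rho^{Q}Mg(x)$ (where $M$ is the Hardy--Littlewood maximal operator) yields, since $a>0$, a convergent geometric series giving
$$\int_{|y^{-1}x|\le R}|y^{-1}x|^{a-Q}|g(y)|\,dy\le C\,R^{a}\,Mg(x).$$
On the far region $|y^{-1}x|>R$, H\"older's inequality and the polar decomposition \eqref{EQ:polar} give
$$\int_{|y^{-1}x|>R}|y^{-1}x|^{a-Q}|g(y)|\,dy\le\Big(\int_{|z|>R}|z|^{(a-Q)p'}dz\Big)^{1/p'}\|g\|_{L^{p}}=\Big(\tfrac{\sigma(\wp)\,q}{Qp'}\Big)^{1/p'}R^{-Q/q}\|g\|_{L^{p}},$$
where the exponent identity $(a-Q)p'+Q=-Qp'/q<0$ uses precisely $a<Q/p$, and the resulting constant is of order $q^{1/p'}$. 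Optimising $C R^{a}Mg(x)+Cq^{1/p'}R^{-Q/q}\|g\|_{L^{p}}$ over $R$ (the exponents satisfy $a+Q/q=Q/p$) leads to
$$|f(x)|\le C\,q^{(1-p/q)/p'}\,(Mg(x))^{p/q}\,\|g\|_{L^{p}}^{1-p/q}\le C\,q^{1-1/p}\,(Mg(x))^{p/q}\,\|g\|_{L^{p}}^{1-p/q};$$
taking the $L^{q}$ norm in $x$ and using $\|(Mg)^{p/q}\|_{L^{q}}=\|Mg\|_{L^{p}}^{p/q}\le C_{p}\|g\|_{L^{p}}^{p/q}$ (maximal theorem, $p>1$) gives $\|f\|_{L^{q}}\le C\,q^{1-1/p}\|g\|_{L^{p}}$ with $C=C(p,Q)$.

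For (ii) I would apply the interpolation Theorem \ref{Sob-emb-thm} with parameters $0<\,c=\tfrac{Q}{p}-\tfrac{Q}{q}\,<\tfrac{Q}{p}$ (taking $a_{\mathrm{thm}}=0$, $b_{\mathrm{thm}}=\tfrac{Q}{p}$), for which $\theta=c/b_{\mathrm{thm}}=1-p/q$, to obtain
$$\|\R^{a/\nu}f\|_{L^{p}}=\|f\|_{\dot L^{p}_{a}(\G)}\le C\,\|f\|_{\dot L^{p}_{0}(\G)}^{p/q}\,\|f\|_{\dot L^{p}_{Q/p}(\G)}^{1-p/q}=C\,\|f\|_{L^{p}(\G)}^{p/q}\,\|\R^{Q/(\nu p)}f\|_{L^{p}(\G)}^{1-p/q}.$$
I expect the main obstacle to be the bookkeeping of the constants' dependence on $q$, which is where the criticality is felt: at $a=Q/p$ the far-field integral above diverges logarithmically, so one must keep $q$ finite and verify that the single factor $q^{1/p'}=q^{1-1/p}$ produced by the polar integral is the \emph{only} source of $q$-growth. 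Concretely, one has to check that the pointwise optimisation contributes merely a bounded factor (the bracket obtained by balancing the near and far terms tends to $1$ as $q\to\infty$) and that the constant furnished by Theorem \ref{Sob-emb-thm} remains bounded as $c\uparrow b_{\mathrm{thm}}$, so that the constant $C_{1}$ in \eqref{crit_GN_ineq} ultimately depends only on $p$ and $Q$.
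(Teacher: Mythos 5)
Your proposal is correct in substance and takes a genuinely different route through the key analytic step. Both you and the paper reduce \eqref{crit_GN_ineq} to (i) an $L^{p}\to L^{q}$ bound for the Riesz potential of order $a=Q(1/p-1/q)$ with operator norm $O(q^{1-1/p})$, followed by (ii) the interpolation of Theorem \ref{Sob-emb-thm}; your choice $a_{\mathrm{thm}}=0$, $b_{\mathrm{thm}}=Q/p$, $\theta=1-p/q$ is exactly the paper's \eqref{I-est4} with the roles of the endpoints swapped. The difference is in (i): the paper splits the kernel $|x|^{a-Q}$ at a radius $s$, derives explicit weak-type estimates at two $q$-dependent exponent pairs $(p_{1},q_{1})$, $(p_{2},q_{2})$, applies the Marcinkiewicz interpolation theorem with an explicit constant, and then tracks the $q\to\infty$ asymptotics of $\theta(q)$, $M_{1}(q)$, $q^{1/p-1}M_{2}(q)$; you instead run Hedberg's argument --- near-field controlled by $R^{a}Mg(x)$ via dyadic shells and the maximal function, far-field by H\"older and the polar decomposition, producing the single factor $\bigl(|\wp|q/(Qp')\bigr)^{1/p'}$ --- and finish with the Hardy--Littlewood maximal theorem on homogeneous groups. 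Your route is more elementary and isolates the source of the $q^{1-1/p}$ growth very cleanly in the far-field polar integral, whereas the paper's route yields semi-explicit intermediate constants in the spirit of its later best-constant results. To your credit, you also flag a point the paper glosses over: the bound $|h_{a}(x)|\leq C|x|^{a-Q}$ for the actual Riesz kernel of $\R^{-a/\nu}$ must be checked to be uniform in $a$ on the relevant compact range.

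One caveat, which you share with the paper rather than introduce. Your near-field constant is the geometric series $\sum_{j\geq0}2^{-ja}=(1-2^{-a})^{-1}\sim(a\ln 2)^{-1}$, which blows up as $q\to p^{+}$ (where $a\to0$); raised to the power $p/q\approx1$ it survives into the final constant, so the claimed $q$-uniformity of $C_{1}$ is not yet established near $q=p$ --- your verification, like the paper's, only addresses $q\to\infty$. The paper has the identical defect: its $M_{1}(q)$ and $M_{2}(q)$ blow up as $q\to p^{+}$, and in fact no uniform bound is possible at the level of the unnormalised kernel, since $\|I_{a}\|_{L^{p}\to L^{q}}\gtrsim 1/a$ (test on the indicator of a ball); only the normalised operator $\R^{-a/\nu}$, whose kernel constant degenerates like $a$ (from the factor $1/\Gamma(a/\nu)$), can satisfy a uniform estimate. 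The gap is harmless and easily patched: once \eqref{crit_GN_ineq} is known for a single fixed $q_{0}>p$, say $q_{0}=2p$, the range $p<q\leq q_{0}$ follows from H\"older interpolation $\|f\|_{L^{q}}\leq\|f\|_{L^{p}}^{1-\eta}\|f\|_{L^{q_{0}}}^{\eta}$ with $1/q=(1-\eta)/p+\eta/q_{0}$, because the resulting exponent $\eta(1-p/q_{0})$ equals $1-p/q$ exactly, and the constant stays bounded. You should append this (or use the sharper kernel bound $|h_{a}(x)|\leq Ca|x|^{a-Q}$ near $a=0$) to make the uniformity claim complete.
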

\begin{rem} We note that when $\G=(\Rn,+)$ and $\R=-\triangle$ is the Laplacian, the inequality \eqref{crit_GN_ineq} was obtained in \cite{OO91} for $p=2$, in \cite{KOS92} for $p=n$ and in \cite{Oz95} for general $p$ as in Theorem \ref{crit_GN_thm}.
\end{rem}
\begin{proof}[Proof of Theorem \ref{crit_GN_thm}] We can assume that $f\not\equiv0$. Let us consider the Riesz potential $I_{\lambda}$ with $0<\lambda<Q$ (see e.g. \cite[Section 4.3.4]{FR16} on graded groups and \cite{RSY_Morrey} on general homogeneous groups), given by
\begin{equation}\label{Riesz} (I_{\lambda}f)(x):=\int_{\G}|xy^{-1}|^{\lambda-Q}f(y)dy=(K_{\lambda}\ast f)(x),
\end{equation}
where $K_{\lambda}(x)=|x|^{\lambda-Q}$. Now we decompose, for some $s>0$ to be chosen later,
$$(I_{\lambda}f)(x)=(I^{(1)}_{\lambda}(s)f)(x)+(I^{(2)}_{\lambda}(s)f)(x):=(K^{(1)}_{\lambda,s}\ast f)(x)+(K^{(2)}_{\lambda,s}\ast f)(x),$$
where $K^{(1)}_{\lambda,s}$ and $K^{(2)}_{\lambda,s}$ are defined by
$$
K_{\lambda}(x)=:
\begin{cases} K^{(1)}_{\lambda,s}, \;\;|x|<s;\\
K^{(2)}_{\lambda,s},   \;\;|x|\geq s.\end{cases}$$
Let $1/\widetilde{q}=1/\widetilde{p}-\lambda/Q$ and $1\leq \widetilde{p}<\widetilde{q}<\infty$. Using Young's inequality (see e.g. \cite[Proposition 1.5.2]{FR16}), and introducing polar coordinates $(r,y)=(|x|, \frac{x}{\mid x\mid})\in (0,\infty)\times\wp$ on $\mathbb{G}$, where $\wp$ is the sphere as in \eqref{EQ:sphere}, and by \eqref{EQ:polar}, we have
$$\|I^{(1)}_{\lambda}(s)f\|_{L^{\widetilde{p}}(\G)}\leq
\|K^{(1)}_{\lambda,s}\|_{L^{1}(\G)}\|f\|_{L^{\widetilde{p}}(\G)}=\frac{|\wp|}{\lambda}s^{\lambda}\|f\|_{L^{\widetilde{p}}(\G)}.$$
Similarly, we get
$$\|I^{(2)}_{\lambda}(s)f\|_{L^{\infty}(\G)}\leq
\|K^{(2)}_{\lambda,s}\|_{L^{\widetilde{p}'}(\G)}\|f\|_{L^{\widetilde{p}}(\G)}
=\left(\frac{|\wp|\widetilde{q}}{Q\widetilde{p}'}\right)^{1/\widetilde{p}'}s^{\lambda-Q/\widetilde{p}}\|f\|_{L^{\widetilde{p}}(\G)},$$
where $1/\widetilde{p}+1/\widetilde{p}'=1$ and $|\wp|$ is the $Q-1$ dimensional surface measure of the unit sphere $\wp$.

Then, as in \cite[Section 2, Formula (2.1)]{Oz95}, one can observe that
\begin{multline}\label{Mar0}
\sup_{z>0}z|\{x\in\G:|(I_{\lambda}f)(x)|>z\}|^{1/\widetilde{q}}\\ \leq
2\left(\frac{|\wp|\widetilde{q}}{Q}\right)^{1-1/\widetilde{p}+1/\widetilde{q}}\frac{(\widetilde{p}-1)^{\frac{(\widetilde{p}-1)(\widetilde{q}-\widetilde{p})}{\widetilde{p}\widetilde{q}}}}
{\widetilde{p}^{1-1/\widetilde{p}-(2\widetilde{p}-1)/\widetilde{q}}(\widetilde{q}-\widetilde{p})^{\widetilde{p}/\widetilde{q}}}\|f\|_{L^{\widetilde{p}}(\G)},
\end{multline}
where $1/\widetilde{q}=1/\widetilde{p}-\lambda/Q$, $1\leq \widetilde{p}<\widetilde{q}<\infty$, $0<\lambda<Q$.

If $(1/p_{1},1/q_{1})=(1,1-\lambda/Q)$ with $0<\lambda<Q$, then \eqref{Mar0} implies that
\begin{equation}\label{Mar1}
\sup_{z>0}z|\{x\in\G:|(I_{\lambda}f)(x)|>z\}|^{1/q_{1}}\leq2\left(\frac{|\wp|q_{1}}{Q(q_{1}-1)}\right)^{1/q_{1}}\|f\|_{L^{1}(\G)}.
\end{equation}
If $\left(\frac{1}{p_{2}},\frac{1}{q_{2}}\right)=\left(\frac{1}{p}-\frac{(Q-\lambda p)^{2}}{pQ(pQ+Q-\lambda p)},\frac{Q-\lambda p}{Qp+Q-\lambda p}\right)$ with $0<\lambda<Q$ and $1<p<Q/\lambda$, then \eqref{Mar0} gives that
$$\sup_{z>0}z|\{x\in\G:|(I_{\lambda}f)(x)|>z\}|^{1/q_{2}}$$
\begin{equation}\label{Mar2}
\leq
2\left(\frac{|\wp|q_{2}}{Q}\right)^{1-1/p_{2}+1/q_{2}}
\frac{(p_{2}-1)^{\frac{(p_{2}-1)(q_{2}-p_{2})}{p_{2}q_{2}}}}
{p_{2}^{1-1/p_{2}-(2p_{2}-1)/q_{2}}(q_{2}-p_{2})^{p_{2}/q_{2}}}\|f\|_{L^{p_{2}}(\G)}.
\end{equation}
We fix $p\in(1,\infty)$ and define $\lambda=Q(1/p-1/q)$ for all $q$ with $p<q<\infty$. Then $0<\lambda<Q$, $1<p<Q/\lambda$, $(1/p_{1},1/q_{1})=(1,1-1/p+1/q)$ and $(1/p_{2},1/q_{2})=(1/p-1/q+1/(q+1),1/(q+1))$, so that \eqref{Mar1} and \eqref{Mar2} show that $I_{Q(1/p-1/q)}$ is of weak types $(p_{1},q_{1})$ and $(p_{2},q_{2})$, respectively. Setting $\theta=\theta(q)=(1-1/p)/(1-1/p+1/q-1/(q+1))$, we get $0<\theta<1$, $1/p=(1-\theta)/p_{1}+\theta/p_{2}$ and $1/q=(1-\theta)/q_{1}+\theta/q_{2}$.

As in \cite[Section 2]{Oz95}, we can now use the Marcinkiewicz interpolation theorem to obtain
\begin{equation}\label{I_est1}
\|I_{\lambda}f\|_{L^{q}(\G)}\leq4\left(q+\frac{pq}{q-p}\right)^{1/q}
(M_{1}(q))^{1-\theta}(M_{2}(q))^{\theta}\|f\|_{L^{p}(\G)},
\end{equation}
where $\lambda=Q(1/p-1/q)$, and
$$M_{1}(q)=\left(\frac{|\wp|q_{1}}{Q(q_{1}-1)}\right)^{1/q_{1}},$$
$$M_{2}(q)=\left(\frac{|\wp|q_{2}}{Q}\right)^{1-1/p_{2}+1/q_{2}}
\frac{(p_{2}-1)^{\frac{(p_{2}-1)(q_{2}-p_{2})}{p_{2}q_{2}}}}
{p_{2}^{1-1/p_{2}-(2p_{2}-1)/q_{2}}(q_{2}-p_{2})^{p_{2}/q_{2}}}.$$
Considering the growth property of the right-hand side of \eqref{I_est1} with respect to $q$, one gets
$$\lim_{q\rightarrow\infty}\theta(q)=1,\;\;\lim_{q\rightarrow\infty}M_{1}(q)=\left(\frac{|\wp|p}{Q}\right)^{1-1/p},$$
and
$$\lim_{q\rightarrow\infty}q^{1/p-1}M_{2}(q)=\left(\frac{|\wp|(p-1)}{Qp}\right)^{1-1/p}.$$
So, there is a constant $C$ depending on $p$ and $Q$ for any $q$ with $p<q<\infty$ such that
$$4\left(q+\frac{pq}{q-p}\right)^{1/q}
(M_{1}(q))^{1-\theta}(M_{2}(q))^{\theta}\leq Cq^{1-1/p}$$
holds. Using this and \eqref{I_est1}, we deduce
\begin{equation}\label{I-est2}
\|I_{\lambda}f\|_{L^{q}(\G)}\leq C q^{1-1/p}\|f\|_{L^{p}(\G)},
\end{equation}
for any $q$ with $p<q<\infty$. Since $\R^{-\lambda/\nu}$ is the Riesz potential (see e.g. \cite[Section 4.3.4]{FR16}), then the inequality \eqref{I-est2} with Corollary \ref{int_op_cor} imply that
\begin{equation}\label{I-est3}
\|f\|_{L^{q}(\G)}\leq C q^{1-1/p}\|\R^{(Q/\nu)(1/p-1/q)}f\|_{L^{p}(\G)},
\end{equation}
for $0<\nu<Q$ and for any $q$ with $p<q<\infty$, where $C$ depends only on $p$ and $Q$.

By Theorem \ref{Sob-emb-thm} with $a=Q/p$, $b=0$ and $c=Q(1/p-1/q)$, hence $\theta=\frac{p}{q}$, we have
\begin{equation}\label{I-est4}
\|\R^{(Q/\nu)(1/p-1/q)}f\|_{L^{p}(\G)}\leq C \|\R^{\frac{Q}{\nu p}}f\|_{L^{p}(\G)}^{1-p/q}\|f\|_{L^{p}(\G)}^{p/q},
\end{equation}
which implies \eqref{crit_GN_ineq} in view of \eqref{I-est3}.
\end{proof}
Now we state the Trudinger-type inequality with the remainder estimate on graded groups.
\begin{thm}\label{Trud_thm} Let $\mathbb{G}$ be a graded Lie group of homogeneous dimension $Q$ and let $\mathcal{R}$ be a positive Rockland operator of homogeneous degree $\nu$. Let $1<p<\infty$. Then there exist positive $\alpha$ and $C_{2}$ such that
\begin{equation}\label{Trud_ineq}
\int_{\G}\left(\exp(\alpha|f(x)|^{p'})-\sum_{0\leq k<p-1, \;k\in\mathbb{N}}\frac{1}{k!}(\alpha|f(x)|^{p'})^{k}\right)dx\leq C_{2}\|f\|^{p}_{L^{p}(\G)}
\end{equation}
holds for all functions $f\in L_{Q/p}^{p}(\G)$ with $\|\R^{\frac{Q}{\nu p}}f\|_{L^{p}(\G)}\leq1$, where $1/p+1/p'=1$.
\end{thm}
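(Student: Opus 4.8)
The plan is to deduce \eqref{Trud_ineq} directly from the critical Gagliardo-Nirenberg inequality of Theorem~\ref{crit_GN_thm} by expanding the exponential into its Taylor series and summing the resulting $L^q$-norm bounds. Since we work under the hypothesis $\|\R^{Q/(\nu p)}f\|_{L^p(\G)}\leq1$ and the exponent $1-p/q$ is nonnegative for $q\geq p$, inequality \eqref{crit_GN_ineq} yields the simplified bound $\|f\|_{L^q(\G)}\leq C_1 q^{1-1/p}\|f\|_{L^p(\G)}^{p/q}$ for every $p\leq q<\infty$.

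First I would expand and regroup the integrand,
$$\exp(\alpha|f|^{p'})-\sum_{0\leq k<p-1,\,k\in\mathbb{N}}\frac{1}{k!}(\alpha|f|^{p'})^k=\sum_{k\geq p-1,\,k\in\mathbb{N}}\frac{\alpha^k}{k!}|f|^{p'k},$$
and integrate term by term by Tonelli's theorem, all summands being nonnegative. This reduces the left-hand side of \eqref{Trud_ineq} to $\sum_{k\geq p-1}\frac{\alpha^k}{k!}\|f\|_{L^{p'k}(\G)}^{p'k}$. The reason the subtracted terms are precisely those with $0\leq k<p-1$ is that the exponent $q:=p'k$ satisfies $q\geq p$ exactly when $k\geq p-1$ (because $p/p'=p-1$), which is the admissible range in Theorem~\ref{crit_GN_thm}.

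Next I would apply the critical Gagliardo-Nirenberg inequality with $q=p'k$ to each remaining term. The crucial algebraic simplification is that $(1-1/p)p'=1$, so that
$$\|f\|_{L^{p'k}(\G)}^{p'k}\leq\big(C_1(p'k)^{1-1/p}\big)^{p'k}\|f\|_{L^p(\G)}^{p}=C_1^{p'k}(p'k)^{k}\|f\|_{L^p(\G)}^{p}.$$
Summing over $k$ yields the bound $\|f\|_{L^p(\G)}^p\sum_{k\geq p-1}\frac{\alpha^k C_1^{p'k}(p'k)^k}{k!}$, and it remains to choose $\alpha$ so that this series converges.

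The main technical point, and the place where $\alpha$ gets fixed, is the convergence of this series. Using Stirling's estimate $k!\gtrsim\sqrt{k}\,(k/e)^k$, the general term is dominated by a constant multiple of $(\alpha C_1^{p'}p'e)^k/\sqrt{k}$. Hence, choosing $\alpha>0$ small enough that $\alpha C_1^{p'}p'e<1$, the series converges and its sum is a finite constant $C_2=C_2(\alpha,C_1,p)$, which establishes \eqref{Trud_ineq}. I expect the delicate part to be the uniform control of $(p'k)^k/k!$ through Stirling's formula and the resulting explicit smallness condition on $\alpha$.
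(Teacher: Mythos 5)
Your proposal is correct and follows essentially the same route as the paper: expand the series, integrate term by term, apply the critical Gagliardo--Nirenberg inequality \eqref{crit_GN_ineq} with $q=p'k$ under the normalisation $\|\R^{\frac{Q}{\nu p}}f\|_{L^{p}(\G)}\leq1$, exploit $(1-1/p)p'=1$, and fix $\alpha$ via Stirling so that $\sum_{k\geq p-1}\frac{k^{k}}{k!}(p'C_{1}^{p'}\alpha)^{k}$ converges. The only difference is cosmetic: the paper states the smallness condition $\alpha<(ep'C_{1}^{p'})^{-1}$ and the explicit value of $C_{2}$ in a separate remark, whereas you carry out the Stirling estimate inside the proof.
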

\begin{rem}\label{rem_Trud_thm} The constant $C_{2}$ can be expressed in terms of the constant $C_{1}=C_{1}(p,Q)$ in \eqref{crit_GN_ineq} as follows
$$C_{2}=C_{2}(\alpha)=\sum_{k\geq p-1,\; k\in\mathbb{N}}\frac{k^{k}}{k!}(p'C_{1}^{p'}\alpha)^{k}.$$
Then, we have \eqref{Trud_ineq} for all $\alpha\in(0, (ep'C_{1}^{p'})^{-1})$ and $C_{2}(\alpha)$.
\end{rem}
\begin{rem} In the case $\G=(\Rn,+)$ and $\R=-\triangle$ is the Laplacian, the inequality \eqref{Trud_ineq} was obtained in \cite{Oz95}. In this abelian case, we can also refer to \cite{Og90} for $p=2$, \cite{OO91} for $p=n=2$, and to \cite{AT99} for $p=n\geq2$.
\end{rem}
\begin{proof}[Proof of Theorem \ref{Trud_thm}] A direct calculation gives
$$\int_{\G}\left(\exp(\alpha|f(x)|^{p'})-\sum_{0\leq k<p-1, \;k\in\mathbb{N}}\frac{1}{k!}(\alpha|f(x)|^{p'})^{k}\right)dx
=\sum_{k\geq p-1, \;k\in\mathbb{N}}\frac{\alpha^{k}}{k!}\int_{\G}|f(x)|^{p'k}dx.$$
Since $k\geq p-1$, we have $p'k\geq p$, then using Theorem \ref{crit_GN_thm} for the above integrals in the last line, we calculate
$$\int_{\G}(\exp(\alpha|f(x)|^{p'})-\sum_{0\leq k<p-1, \;k\in\mathbb{N}}\frac{1}{k!}(\alpha|f(x)|^{p'})^{k})dx$$
$$\leq \sum_{k\geq p-1, \;k\in\mathbb{N}}\frac{\alpha^{k}}{k!} C_{1}^{p'k}(p'k)^{p'k(1-1/p)}\int_{\G}|f(x)|^{p}dx$$
$$=\sum_{k\geq p-1, \;k\in\mathbb{N}}\frac{k^{k}}{k!}(p'C_{1}^{p'}\alpha)^{k}\|f\|^{p}_{L^{p}(\G)},$$
which implies \eqref{Trud_ineq}.
\end{proof}
Now we show that the obtained critical Gagliardo-Nirenberg inequality \eqref{crit_GN_ineq} and Trudinger-type inequality \eqref{Trud_ineq} are actually equivalent on general graded groups. Note that it is already known in $\Rn$, see \cite{Oz97}.
\begin{thm}\label{equiv}
The inequalities \eqref{crit_GN_ineq} and \eqref{Trud_ineq} are equivalent. Furthermore, we have
\begin{equation}\label{equiv_identity}
\frac{1}{\widetilde{\alpha}p'e}=A^{p'}=B^{p'},
\end{equation}
where
\begin{equation}\label{alphaAB}
\begin{split}
&\widetilde{\alpha}=\sup\{\alpha>0; \exists C_{2}=C_{2}(\alpha):\eqref{Trud_ineq} \textrm{ holds }\forall f\in L_{Q/p}^{p}(\G)
\textrm{ with } \|\R^{\frac{Q}{\nu p}}f\|_{L^{p}(\G)}\leq1\},\\
&A=\inf\left\{C_{1}>0; \exists r=r(C_{1}) \textrm{ with } r\geq p:\eqref{crit_GN_ineq}\textrm{ holds }\forall f\in L_{Q/p}^{p}(\G), \forall q \textrm{ with } r\leq q<\infty\right\},\\
&B=\limsup_{q\rightarrow \infty}\frac{\|f\|_{L^{q}(\G)}}{q^{1-1/p}\|\R^{\frac{Q}{\nu p}}f\|_{L^{p}(\G)}^{1-p/q}\|f\|_{L^{p}(\G)}^{p/q}}.
\end{split}
\end{equation}
\end{thm}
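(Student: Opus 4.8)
The plan is to prove the two implications \eqref{crit_GN_ineq}$\Rightarrow$\eqref{Trud_ineq} and \eqref{Trud_ineq}$\Rightarrow$\eqref{crit_GN_ineq} separately, both resting on the power-series identity already exploited in the proof of Theorem \ref{Trud_thm},
$$\int_{\G}\left(\exp(\alpha|f(x)|^{p'})-\sum_{0\leq k<p-1,\,k\in\mathbb{N}}\frac{1}{k!}(\alpha|f(x)|^{p'})^{k}\right)dx=\sum_{k\geq p-1,\,k\in\mathbb{N}}\frac{\alpha^{k}}{k!}\|f\|_{L^{p'k}(\G)}^{p'k},$$
which holds for every $f$ and converts the Trudinger functional into a series of critical Gagliardo-Nirenberg norms taken at the exponents $q=p'k$. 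Since \eqref{crit_GN_ineq} governs precisely these norms, the theorem reduces to reading this identity in both directions while tracking constants through the Stirling asymptotics $(k!)^{1/k}\sim k/e$. Before doing so I would record the elementary fact that $A=B$: by definition $B$ is the $\limsup$ as $q\to\infty$ of the sharp constant in \eqref{crit_GN_ineq} at the fixed exponent $q$ (after dividing out $q^{1-1/p}$ and taking the supremum over $f$), while $A$ is the least constant admissible simultaneously for all large $q$; the $\limsup$ characterisation yields both $B\leq A$ and $A\leq B$ at once.

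First I would treat \eqref{crit_GN_ineq}$\Rightarrow$\eqref{Trud_ineq}, which is Theorem \ref{Trud_thm} together with Remark \ref{rem_Trud_thm}. Substituting \eqref{crit_GN_ineq} at $q=p'k$ into the right-hand side of the identity under the normalisation $\|\R^{Q/(\nu p)}f\|_{L^{p}(\G)}\leq1$ produces the series $\sum_{k\geq p-1}\frac{k^{k}}{k!}(p'C_{1}^{p'}\alpha)^{k}\|f\|_{L^{p}(\G)}^{p}$; by the root test and $(k^{k}/k!)^{1/k}\to e$ this converges exactly when $p'C_{1}^{p'}\alpha<1/e$, i.e. for every $\alpha<(ep'C_{1}^{p'})^{-1}$, with $C_{2}(\alpha)$ equal to the sum. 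Letting $C_{1}\downarrow A$ gives $\widetilde{\alpha}\geq(ep'A^{p'})^{-1}$, that is, $\frac{1}{\widetilde{\alpha}p'e}\leq A^{p'}$.

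For the reverse implication I would discard all but one term of the nonnegative series: if \eqref{Trud_ineq} holds with $\alpha$ and $C_{2}=C_{2}(\alpha)$ then, for each integer $k\geq p-1$,
$$\frac{\alpha^{k}}{k!}\|f\|_{L^{p'k}(\G)}^{p'k}\leq C_{2}\|f\|_{L^{p}(\G)}^{p}\qquad\text{whenever}\quad\|\R^{Q/(\nu p)}f\|_{L^{p}(\G)}\leq1.$$
Replacing $f$ by $f/\|\R^{Q/(\nu p)}f\|_{L^{p}(\G)}$ to restore homogeneity, this is exactly \eqref{crit_GN_ineq} at $q=p'k$ with the constant $(p'k)^{-1/p'}(C_{2}k!/\alpha^{k})^{1/(p'k)}$, which by Stirling tends to $(ep'\alpha)^{-1/p'}$ as $k\to\infty$. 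The intermediate exponents $p'k<q<p'(k+1)$ are recovered by H\"older interpolation between the two endpoints (or by Theorem \ref{Sob-emb-thm}), and since both endpoints share the same asymptotic constant the interpolated one has the same limit; hence $B^{p'}\leq(ep'\alpha)^{-1}$ for every admissible $\alpha$, and letting $\alpha\uparrow\widetilde{\alpha}$ gives $B^{p'}\leq\frac{1}{\widetilde{\alpha}p'e}$. Combining the three facts yields the chain $A^{p'}=B^{p'}\leq\frac{1}{\widetilde{\alpha}p'e}\leq A^{p'}$, which collapses to the asserted equalities \eqref{equiv_identity}.

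The main obstacle is sharpness rather than equivalence: the two implications are soft once the identity is in hand, but \eqref{equiv_identity} forces the convergence threshold $1/e$ of the GN$\Rightarrow$Trudinger series to coincide with the Stirling limit $(ep'\alpha)^{-1/p'}$ of the Trudinger$\Rightarrow$GN constant, so no constant may be squandered on either side. Concretely one must verify that the subexponential factor $\sqrt{2\pi k}$ in Stirling and the polynomial factor $(p'k)^{-1/p'}$ both wash out under the $k$-th root, and that the interpolation in the reverse direction preserves the leading asymptotics in $q$; this precise power-counting in the exponents $1-1/p$ and $p/q$ is the delicate part.
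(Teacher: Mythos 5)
Your proposal is correct and follows essentially the same route as the paper's proof: the same power-series identity, the root test/Stirling asymptotics for \eqref{crit_GN_ineq}$\Rightarrow$\eqref{Trud_ineq}, and single-term extraction followed by interpolation between $L^{p'k}(\G)$ and $L^{p'(k+1)}(\G)$ with Stirling's formula for the converse. The only difference is bookkeeping: you prove $A=B$ directly from the definitions and close the chain $A^{p'}=B^{p'}\leq \frac{1}{\widetilde{\alpha}p'e}\leq A^{p'}$, whereas the paper invokes only $A\geq B$ and closes the chain $A^{p'}\leq \frac{1}{\widetilde{\alpha}p'e}\leq B^{p'}\leq A^{p'}$, the underlying estimates being identical.
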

\begin{proof}[Proof of Theorem \ref{equiv}] We have already shown that \eqref{crit_GN_ineq} implies \eqref{Trud_ineq} in the proof of Theorem \ref{Trud_thm}. By Remark \ref{rem_Trud_thm}, \eqref{alphaAB} and taking into account that $A\geq B$, we note that $\alpha<(ep'C_{1}^{p'})^{-1}$ implies $\alpha<(ep'B^{p'})^{-1}$, that is, \eqref{crit_GN_ineq} implies \eqref{Trud_ineq} with $\widetilde{\alpha}\geq(ep'B^{p'})^{-1}$.

Now let us show \eqref{Trud_ineq}$\Rightarrow$\eqref{crit_GN_ineq} with $\widetilde{\alpha}\leq(ep'A^{p'})^{-1}$. Since $\|\R^{\frac{Q}{\nu p}}f\|_{L^{p}(\G)}\leq1$, replacing $f$ by $f/\|\R^{\frac{Q}{\nu p}}f\|_{L^{p}(\G)}$ in \eqref{Trud_ineq} we obtain
\begin{multline}\label{Trud_ineq_2}
\int_{\G}\left(\exp\left(\frac{\alpha|f(x)|^{p'}}{\|\R^{\frac{Q}{\nu p}}f\|_{L^{p}(\G)}^{p'}}\right)-\sum_{0\leq k<p-1, \;k\in\mathbb{N}}\frac{1}{k!}\left(\frac{\alpha|f(x)|^{p'}}{\|\R^{\frac{Q}{\nu p}}f\|_{L^{p}(\G)}^{p'}}\right)^{k}\right)dx  \\
\leq C_{2}\frac{\|f\|^{p}_{L^{p}(\G)}}{\|\R^{\frac{Q}{\nu p}}f\|^{p}_{L^{p}(\G)}}.
\end{multline}
Then, we see that for any $\varepsilon$ with $0<\varepsilon<\widetilde{\alpha}$ there is $C_{\varepsilon}$ such that
\begin{multline}\label{Trud_ineq_3}
\int_{\G}\left(\exp\left(\frac{(\widetilde{\alpha}-\varepsilon)|f(x)|^{p'}}{\|\R^{\frac{Q}{\nu p}}f\|_{L^{p}(\G)}^{p'}}\right)-\sum_{0\leq k<p-1, \;k\in\mathbb{N}}\frac{1}{k!}\left(\frac{(\widetilde{\alpha}-\varepsilon)|f(x)|^{p'}}{\|\R^{\frac{Q}{\nu p}}f\|_{L^{p}(\G)}^{p'}}\right)^{k}\right)dx
\\
\leq C_{\varepsilon}\frac{\|f\|^{p}_{L^{p}(\G)}}{\|\R^{\frac{Q}{\nu p}}f\|^{p}_{L^{p}(\G)}}
\end{multline}
holds for all $f\in L_{Q/p}^{p}(\G)$. It follows that
$$
\int_{\G}\sum_{k\geq p-1, \;k\in\mathbb{N}}\frac{1}{k!}\left(\frac{(\widetilde{\alpha}-\varepsilon)|f(x)|^{p'}}{\|\R^{\frac{Q}{\nu p}}f\|_{L^{p}(\G)}^{p'}}\right)^{k}dx\leq C_{\varepsilon}\frac{\|f\|^{p}_{L^{p}(\G)}}{\|\R^{\frac{Q}{\nu p}}f\|^{p}_{L^{p}(\G)}},$$
that is,
\begin{equation}\label{Trud_ineq_3_2}
\|f\|_{L^{p'k}(\G)}\leq (C_{\varepsilon}k!)^{1/p'k}(\widetilde{\alpha}-\varepsilon)^{-1/p'}
\|\R^{\frac{Q}{\nu p}}f\|^{1-(p-1)/k}_{L^{p}(\G)}\|f\|^{(p-1)/k}_{L^{p}(\G)}
\end{equation}
for all $k\in\mathbb{N}$ with $k\geq p-1$. Let $q>p$ and $p'k\leq q <p'(k+1)$. Then, by interpolating this between $L^{p'k}(\G)$ and $L^{p'(k+1)}(\G)$ and taking into account $(k+1)!\leq \Gamma(2+q/p')$, we get
\begin{equation}\label{Trud_ineq_4}
\|f\|_{L^{q}(\G)}\leq (C_{\varepsilon}\Gamma(2+q/p'))^{1/p'k}(\widetilde{\alpha}-\varepsilon)^{-1/p'}
\|\R^{\frac{Q}{\nu p}}f\|^{1-p/q}_{L^{p}(\G)}\|f\|^{p/q}_{L^{p}(\G)},
\end{equation}
where $\Gamma$ is the gamma function. Applying here the Stirling's formula and $p'k\geq q-p'$, we obtain that there is $r$ such that
\begin{equation}\label{crit_GN_ineq2}
\|f\|_{L^{q}(\G)}\leq ((p'e(\widetilde{\alpha}-\varepsilon))^{-1/p'}+\delta)q^{1-1/p}\|\R^{\frac{Q}{\nu p}}f\|_{L^{p}(\G)}^{1-p/q}\|f\|_{L^{p}(\G)}^{p/q}
\end{equation}
holds for all $f\in L_{Q/p}^{p}(\G)$ and $q$ with $r\leq q<\infty$. Thus, $A\leq (p'e(\widetilde{\alpha}-\varepsilon))^{-1/p'}+\delta$, then by arbitrariness of $\varepsilon$ and $\delta$ we obtain $\widetilde{\alpha}\leq(ep'A^{p'})^{-1}$.

This completes the proof of Theorem \ref{equiv}.
\end{proof}
\section{Brezis-Gallouet-Wainger inequalities}
\label{SEC:Brezis}
In this section we investigate Brezis-Gallouet-Wainger inequalities, which concern the limiting case of the Sobolev estimates (see \cite{B82}, \cite{BG80} and \cite{BW80}). As part of the proof we extend the analysis of Folland \cite{F75} related to H\"older spaces from the setting of stratified to general homogeneous groups. For the background analysis on homogeneous groups we refer to Folland and Stein's fundamental book \cite{FS-book} as well as to a more recent treatment in \cite{FR16}.

We recall some definitions from \cite{F75}, see also \cite[Chapter 3]{FR16}.

\begin{defn}\label{DEF:hom}
Let $\G$ be a nilpotent Lie group and let $\lambda$ be a complex number.
\begin{itemize}
\item A measurable function $f$ on $\G$ is called homogeneous of degree $\lambda$ if $f\circ D_{r}=r^{\lambda}f$ for all positive $r>0$, where $D_{r}$ is the family of dilations on $\G$.
\item A distribution $\tau\in\mathcal{D}'$ is called homogeneous of degree $\lambda$ if $\langle\tau,\phi\circ D_{r}\rangle=r^{-Q-\lambda}\langle\tau,\phi\rangle$ for all $\phi\in\mathcal{D}$ and all positive $r>0$.
\item A distribution which is smooth away from the origin and homogeneous of degree $\lambda-Q$ is called a kernel of type $\lambda$ on $\G$.
\end{itemize}
\end{defn}
We also need the following results:
\begin{prop}[{\cite[Proposition 1.4]{F75}}]
\label{Fol_prop1}
Let $\G$ be a nilpotent Lie group and let $|\cdot|$ be a homogeneous quasi-norm on $\G$. Then, there is a positive constant $C$ such that
$$|xy|\leq C(|x|+|y|),\;\;\forall x,y\in \G.$$
\end{prop}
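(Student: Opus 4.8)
The plan is to reduce the desired quasi-triangle inequality to taking a supremum over a compact set, exploiting the homogeneity of $|\cdot|$ together with the fact that each dilation $D_r$ is an automorphism of $\G$. First I would establish that the closed unit ball $\overline{B}:=\{x\in\G:\,|x|\leq 1\}$ is compact. Working in the exponential coordinates $x=(x_{1},\ldots,x_{n})$, I note that every nonzero point admits a unique representation $x=D_{r}(u)$ with $r>0$ and $u$ on the Euclidean unit sphere $S=\{|x|_{\mathrm{Eucl}}=1\}$: for fixed $u\neq 0$ the function $r\mapsto|D_{r}(u)|_{\mathrm{Eucl}}$ is a strictly increasing bijection of $(0,\infty)$ onto itself, since all dilation weights $\nu_{j}$ are positive. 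As $|\cdot|$ is continuous and strictly positive on the compact set $S$, it attains there a positive minimum $m$; combined with $|D_{r}(u)|=r|u|$ this gives $|x|\geq m\,r\to\infty$ as $x\to\infty$. Hence $|\cdot|$ is coercive, so $\overline{B}$ is Euclidean-bounded, and being closed by continuity of $|\cdot|$, it is compact.

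Next I would set
$$C:=\sup\{|xy|:\,x,y\in\overline{B}\}.$$
Since group multiplication is smooth and $|\cdot|$ is continuous, the map $(x,y)\mapsto|xy|$ is continuous, and $\overline{B}\times\overline{B}$ is compact, so this supremum is finite (and strictly positive).

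Finally I would remove the normalisation by scaling. For arbitrary $x,y\in\G$ not both equal to the identity, put $\lambda:=\max(|x|,|y|)>0$. Then $|D_{1/\lambda}(x)|=|x|/\lambda\leq 1$ and likewise for $y$, so $D_{1/\lambda}(x),D_{1/\lambda}(y)\in\overline{B}$. Because each $D_{r}$ is an automorphism of $\G$, we have $D_{1/\lambda}(x)\,D_{1/\lambda}(y)=D_{1/\lambda}(xy)$, whence by the definition of $C$ and the homogeneity of $|\cdot|$,
$$\frac{|xy|}{\lambda}=|D_{1/\lambda}(xy)|=|D_{1/\lambda}(x)\,D_{1/\lambda}(y)|\leq C.$$
Therefore $|xy|\leq C\lambda=C\max(|x|,|y|)\leq C(|x|+|y|)$, which is the claim (the case $x=y=0$ being trivial).

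The main obstacle is the compactness step: one must verify that the quasi-norm is coercive, i.e. that its sublevel sets are bounded, which is precisely where the positivity of all dilation weights $\nu_{j}$ and the continuity and definiteness of $|\cdot|$ enter. Once compactness of $\overline{B}$ is secured, the remainder is a routine scaling argument relying only on the $D_{r}$ being group automorphisms and on $|\cdot|$ being $D_{r}$-homogeneous of degree one.
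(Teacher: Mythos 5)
Your proposal is correct; note that the paper itself offers no proof of this statement, citing it directly as \cite[Proposition 1.4]{F75}. Your argument --- compactness of the quasi-norm unit ball (via coercivity), boundedness of the continuous map $(x,y)\mapsto|xy|$ on $\overline{B}\times\overline{B}$, and then rescaling by the dilation automorphisms $D_{1/\lambda}$ --- is essentially the standard proof given in Folland's cited work, so there is nothing to correct beyond the routine (and easily filled) gloss that Euclidean-unbounded $x$ forces the polar radius $r$ to tend to infinity.
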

\begin{prop}[{\cite[Proposition 1.15]{F75}}]
\label{Fol_prop2}
Let $\G$ be a nilpotent Lie group and let $|\cdot|$ be a homogeneous quasi-norm on $\G$. For any $f\in C^{2}(\G\backslash\{0\})$ homogeneous of degree $\lambda\in\mathbb{R}$, there are constants $C,\varepsilon>0$ such that
\begin{equation}\label{Fol_prop2_ineq1}
|f(xy)-f(x)|\leq C|y||x|^{\lambda-1}\;\textrm{whenever}\;|y|\leq\frac{1}{2}|x|,
\end{equation}
\begin{equation}\label{Fol_prop2_ineq2}
|f(xy)+f(xy^{-1})-2f(x)|\leq C|y|^{2}|x|^{\lambda-2}\;\textrm{whenever}\;|y|\leq\varepsilon|x|.
\end{equation}
\end{prop}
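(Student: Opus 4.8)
The plan is to deduce both estimates from the dilation structure of $f$ together with the fact that $f$ is genuinely $C^2$ on every compact annulus $\{a\le|x|\le b\}\subset\G\setminus\{0\}$. First I would use homogeneity to reduce to the unit sphere $\wp=\{|x|=1\}$. Given $x\ne 0$, set $r:=|x|$ and write $x=D_r x_0$ with $|x_0|=1$; since each $D_r$ is a group automorphism, writing $y=D_r y_0$ gives $xy=D_r(x_0y_0)$ and $xy^{-1}=D_r(x_0y_0^{-1})$, while $|y_0|=r^{-1}|y|$. Because $f\circ D_r=r^{\lambda}f$, the estimate \eqref{Fol_prop2_ineq1} is equivalent to
\[ |f(x_0y_0)-f(x_0)|\le C|y_0|,\qquad |x_0|=1,\ |y_0|\le\tfrac12, \]
and \eqref{Fol_prop2_ineq2} to
\[ |f(x_0y_0)+f(x_0y_0^{-1})-2f(x_0)|\le C|y_0|^{2},\qquad |x_0|=1,\ |y_0|\le\varepsilon. \]
Now $x_0$ ranges over the compact sphere $\wp$, on which all left-invariant derivatives of $f$ will be bounded.

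For the first difference I would work in exponential coordinates of the first kind adapted to the dilations, writing $y_0=\exp_{\G}(\sum_j u_jX_j)$, so that $V:=\sum_j u_jX_j$ defines the one-parameter path $t\mapsto\exp_{\G}(tV)$ joining the identity to $y_0$. Then
\[ f(x_0y_0)-f(x_0)=\int_0^1(Vf)(x_0\exp_{\G}(tV))\,dt=\sum_j u_j\int_0^1(X_jf)(x_0\exp_{\G}(tV))\,dt. \]
The key numerical input is the scaling of coordinates against the quasi-norm: since the basis is adapted to the dilations, $X_j$ is homogeneous of degree $\nu_j$ and $D_r$ multiplies the $j$-th coordinate by $r^{\nu_j}$, whence $|u_j|\lesssim|y_0|^{\nu_j}$ and $|\exp_{\G}(tV)|\lesssim|y_0|$ for $t\in[0,1]$. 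After normalising the dilation weights so that $\nu_j\ge1$ for all $j$ (which holds automatically in the graded case), one has $|y_0|^{\nu_j}\le|y_0|$ for $|y_0|\le1$. Because $X_jf$ is homogeneous of degree $\lambda-\nu_j$ and continuous on $\G\setminus\{0\}$, it is bounded on the annulus swept out by the path, and summing the $n$ terms yields the reduced form of \eqref{Fol_prop2_ineq1}. Undoing the scaling restores the factor $|x|^{\lambda-1}$, each term contributing $|x|^{\lambda}(|y|/|x|)^{\nu_j}\le|y||x|^{\lambda-1}$.

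For the symmetric second difference I would Taylor-expand $g(u):=f(x_0\exp_{\G}(\sum_j u_jX_j))$ to second order. Inversion negates first-kind coordinates, $y_0^{-1}=\exp_{\G}(\sum_j(-u_j)X_j)$, so the combination becomes $g(u)+g(-u)-2g(0)$, in which the first-order term cancels and the integral form of Taylor's remainder gives $|g(u)+g(-u)-2g(0)|\le\sum_{i,j}|u_iu_j|\sup_{|s|\le1}|\partial_i\partial_jg(su)|$. Each $\partial_i\partial_jg$ is a combination, with coefficients smooth and bounded on the compact $u$-region, of the first- and second-order left-invariant derivatives $X_kf$ and $X_kX_lf$ along the path; these are homogeneous of degrees $\lambda-\nu_k$ and $\lambda-\nu_k-\nu_l$, hence bounded on the relevant annulus, while $|u_iu_j|\lesssim|y_0|^{\nu_i+\nu_j}\le|y_0|^{2}$ for $|y_0|\le1$. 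This gives the reduced form of \eqref{Fol_prop2_ineq2}, and rescaling restores $|x|^{\lambda-2}$.

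The step I expect to be the main obstacle — and the genuinely new point compared with Folland's stratified argument, which uses only the weight-one generators — is ensuring that the connecting paths $x_0\exp_{\G}(tV)$ and the segments $x_0\exp_{\G}(su)$ remain inside a fixed compact annulus bounded away from the origin, so that the homogeneous derivatives $X_jf$ and $X_iX_jf$, which may blow up at $0$, are legitimately bounded along them. This is where Proposition \ref{Fol_prop1} enters: from $|\exp_{\G}(tV)|\lesssim|y_0|$ and $|x_0|=1$ the quasi-triangle inequality controls $|x_0\exp_{\G}(tV)|$ both above and below, and it is the lower bound that forces the smallness threshold on $|y_0|$ (this is the role of $\varepsilon$ in \eqref{Fol_prop2_ineq2}, and the reason $\tfrac12$ in \eqref{Fol_prop2_ineq1} must be read relative to a fixed quasi-norm). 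The remaining care is purely the weighted bookkeeping: one must use \emph{all} the basis directions $X_1,\dots,X_n$ rather than a generating first layer, and verify that the higher-weight directions ($\nu_j>1$) contribute only subdominant powers of $|y|$, which is exactly the content of the scaling estimate $|u_j|\lesssim|y|^{\nu_j}$.
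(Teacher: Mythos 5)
First, a point of reference: the paper does not prove this proposition at all --- it is quoted verbatim from Folland \cite[Proposition 1.15]{F75} --- so your reconstruction can only be measured against the standard argument (which is indeed what the authors implicitly extend when they claim to carry Folland's H\"older analysis from stratified to general homogeneous groups). Your route is that standard argument: reduce by homogeneity and the automorphism property of $D_r$ to $|x_0|=1$; join $x_0$ to $x_0y_0$ along $t\mapsto x_0\exp_{\G}(tV)$ and integrate $Vf=\sum_j u_jX_jf$; use $|u_j|\lesssim|y_0|^{\nu_j}\le|y_0|$ (your insistence on the normalisation $\nu_j\ge1$ is not cosmetic --- the stated estimate is genuinely false if some weight is $<1$, and in the graded case the weights are automatically $\ge1$); bound the homogeneous functions $X_jf$, $X_iX_jf$ on a compact annulus; and for \eqref{Fol_prop2_ineq2} exploit the cancellation of the first-order Taylor term in $g(u)+g(-u)-2g(0)$. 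All of this is sound.

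The genuine gap concerns the range in \eqref{Fol_prop2_ineq1}. Your path argument requires $x_0\exp_{\G}(tV)$ to stay in a fixed annulus away from the origin, and via Proposition \ref{Fol_prop1} this forces $|y_0|\le\varepsilon$ with $\varepsilon$ determined by the quasi-norm constants; in general $\varepsilon<1/2$. You noticed exactly this tension but resolved it the wrong way, by suggesting that the threshold $\tfrac12$ \enquote{must be read relative to a fixed quasi-norm}, i.e.\ by effectively weakening \eqref{Fol_prop2_ineq1} to the range $|y|\le\varepsilon|x|$. The proposition as stated (threshold $\tfrac12$, for any fixed homogeneous quasi-norm, with $C$ depending on it) is correct, and the missing step is elementary: on the compact set $\{(x_0,y_0):|x_0|=1,\ |y_0|\le\tfrac12\}$ one has $x_0y_0\neq 0$, since $x_0y_0=0$ would force $y_0=x_0^{-1}$ and hence $|y_0|=1$; by compactness and continuity $|x_0y_0|$ is therefore bounded below by a positive constant, so in the regime $\varepsilon\le|y_0|\le\tfrac12$ no cancellation is needed at all:
\begin{equation*}
|f(x_0y_0)-f(x_0)|\le 2\sup_{c\le|w|\le C}|f(w)|=:M\le \frac{M}{\varepsilon}\,|y_0|.
\end{equation*}
Splicing this crude bound onto your mean-value bound for $|y_0|\le\varepsilon$ gives \eqref{Fol_prop2_ineq1} in the full stated range (the same trick would even remove the $\varepsilon$ from \eqref{Fol_prop2_ineq2}). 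This is not a pedantic point here: the paper applies \eqref{Fol_prop2_ineq1} in the proof of Lemma \ref{Lip_lem} precisely on the region $|z|>2|y|$, i.e.\ with the threshold $\tfrac12$, so the statement your argument actually proves would not plug directly into the paper's use of it without adjusting that decomposition.
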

Let us now state the first main result of this section.
\begin{thm}\label{BGW_thm} Let $\mathbb{G}$ be a graded Lie group of homogeneous dimension $Q$ and let $\mathcal{R}$ be a positive Rockland operator of homogeneous degree $\nu$. Let $a,p,q\in \mathbb{R}$ with $1<p,q<\infty$ and $a>Q/q$. Then there exists $C_{3}>0$ such that we have
\begin{equation}\label{BGW1}
\|f\|_{L^{\infty}(\G)}\leq C_{3}(1+\log(1+\|\R^{a/\nu} f\|_{L^{q}(\G)}))^{1/p'}
\end{equation}
for all functions $f\in L^{p}_{Q/p}(\G)\cap L^{q}_{a}(\G)$ with $\|f\|_{L^{p}_{Q/p}(\G)}\leq1$.
\end{thm}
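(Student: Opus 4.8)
The plan is to estimate $L:=\|f\|_{L^{\infty}(\G)}$ by confronting two facts: the almost-exponential growth of the $L^{r}$-norms of $f$ coming from the critical Gagliardo--Nirenberg inequality, and a H\"older modulus of continuity for $f$ coming from the \emph{over-critical} regularity $f\in L^{q}_{a}(\G)$ with $a>Q/q$. Put $M:=\|\R^{a/\nu}f\|_{L^{q}(\G)}$ and $\beta:=a-Q/q>0$. Since $\|f\|_{L^{p}_{Q/p}(\G)}\le1$ controls both $\|f\|_{L^{p}(\G)}\le1$ and $\|\R^{Q/(\nu p)}f\|_{L^{p}(\G)}\le1$, Theorem \ref{crit_GN_thm} applied with these unit bounds yields the growth estimate
\begin{equation}\label{BGWgrowth}
\|f\|_{L^{r}(\G)}\le C_{1}\,r^{1/p'}\qquad\textrm{for all }r\ge p.
\end{equation}
I may assume $L>0$ and $M>0$, otherwise $f\equiv0$ and there is nothing to prove.

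The first substantive step is the global H\"older estimate
\begin{equation}\label{BGWholder}
|f(x)-f(y)|\le C\,M\,|xy^{-1}|^{\beta},\qquad x,y\in\G,
\end{equation}
i.e.\ a quantitative form of the Sobolev--Morrey embedding $L^{q}_{a}(\G)\hookrightarrow C^{0,\beta}$. Writing $g:=\R^{a/\nu}f$ with $\|g\|_{L^{q}(\G)}=M$ and inverting, $f=\R^{-a/\nu}g=K_{a}\ast g$, where $K_{a}$ is the convolution kernel of $\R^{-a/\nu}$; by \cite[Section 4.3.4]{FR16} it is a kernel of type $a$ in the sense of Definition \ref{DEF:hom}, hence smooth off the origin and homogeneous of degree $a-Q$. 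Thus $f(x)-f(y)=\int_{\G}(K_{a}(xz^{-1})-K_{a}(yz^{-1}))g(z)\,dz$, and by H\"older's inequality \eqref{BGWholder} reduces, after the change of variable $w=yz^{-1}$, to the kernel bound $\big(\int_{\G}|K_{a}(hw)-K_{a}(w)|^{q'}dw\big)^{1/q'}\le C|h|^{\beta}$ with $h:=xy^{-1}$ and $1/q+1/q'=1$. To prove this I would split the $w$-integration into $\{|w|\le 2|h|\}$ and its complement. On the near region one discards the cancellation, uses only $|K_{a}(w)|\le C|w|^{a-Q}$ and the quasi-triangle inequality of Proposition \ref{Fol_prop1} to treat $K_{a}(hw)$, and computes the $L^{q'}$-integral of $|w|^{a-Q}$ over a quasi-ball of radius $\sim|h|$; this produces exactly the power $|h|^{\beta}$ because $(a-Q)q'+Q=q'\beta>0$. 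On the far region, where $|h|\le\tfrac12|w|$, one invokes the homogeneous mean-value inequality of Proposition \ref{Fol_prop2} to gain a factor $|h|$ against the integrable decay $|w|^{a-Q-1}$, again summing to $|h|^{\beta}$.

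I expect this kernel bound to be the main obstacle. The delicacy, and the reason the paper must extend Folland's H\"older calculus from stratified to general homogeneous groups, is that comparing $K_{a}(xz^{-1})$ with $K_{a}(yz^{-1})$ amounts to a \emph{left} translation of the homogeneous kernel $K_{a}$ by $h=xy^{-1}$, whereas the mean-value estimate is most naturally a right-translation statement; passing between the two involves the conjugate $w^{-1}hw$, and on a non-abelian homogeneous group conjugation distorts the quasi-norm. Controlling this interaction, together with tracking the admissible range of the homogeneous degree $a-Q$ (and the possible logarithmic corrections to $K_{a}$), is precisely what Propositions \ref{Fol_prop1}--\ref{Fol_prop2}, carried over to the homogeneous setting, are designed to handle. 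Once these pointwise estimates are available, \eqref{BGWholder} follows by the splitting above.

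With \eqref{BGWholder} in hand the conclusion is the classical Brezis--Gallouet--Wainger optimization. Since $f\in L^{p}(\G)\cap C^{0,\beta}$ it is continuous and vanishes at infinity, so I may pick $x_{0}$ with $|f(x_{0})|\ge L/2$; on the quasi-ball $B(x_{0},\rho):=\{x:\,|xx_{0}^{-1}|<\rho\}$ estimate \eqref{BGWholder} gives $|f(x)|\ge L/2-CM\rho^{\beta}$, so the choice $\rho:=(L/(4CM))^{1/\beta}$ forces $|f|\ge L/4$ there. As Haar measure scales by dilation and bi-invariance, $|B(x_{0},\rho)|=c_{0}\rho^{Q}$, and testing \eqref{BGWgrowth} against this set gives, for every $r\ge p$,
\begin{equation}\label{BGWtest}
(L/4)^{r}\,c_{0}\rho^{Q}\le\|f\|_{L^{r}(\G)}^{r}\le C_{1}^{\,r}\,r^{\,r/p'}.
\end{equation}
Taking $r$-th roots and inserting $\rho$ turns \eqref{BGWtest} into $L/4\le C_{1}\,r^{1/p'}c_{0}^{-1/r}\exp\!\big(\tfrac{Q}{\beta r}\log(4CM/L)\big)$, and the final step is to minimize the right-hand side in $r$: the optimal choice $r\sim\tfrac{p'Q}{\beta}\log(4CM/L)$ renders both the exponential and $c_{0}^{-1/r}$ of size $O(1)$ and yields $L\le C(1+\log(4CM/L))^{1/p'}$. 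A routine separation of cases finishes: if $L\le1$ the claim is trivial since the right-hand side of \eqref{BGW1} is bounded below, while if $L>1$ then $\log(1/L)<0$, so $\log(4CM/L)\le\log(4C)+\log M\lesssim1+\log(1+M)$, giving the stated bound $L\le C_{3}(1+\log(1+M))^{1/p'}$. The only genuine checks are that $r\ge p$ at the optimum, valid in the only nontrivial regime $M$ large, and that all constants depend solely on $p,q,a,Q$.
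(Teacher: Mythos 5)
For the range $Q/q<a<Q/q+1$ your argument is, in substance, the paper's own proof: the two ingredients are identical (the growth bound $\|f\|_{L^{r}(\G)}\le C_{1}r^{1/p'}$ for $r\ge p$ from Theorem \ref{crit_GN_thm}, and a H\"older modulus for $f=\R^{-a/\nu}g$, $g=\R^{a/\nu}f$, obtained by a near/far splitting of the kernel increment and Propositions \ref{Fol_prop1}--\ref{Fol_prop2}; this is exactly Lemma \ref{Lip_lem}), and your endgame differs only cosmetically: you test the $L^{r}$ bound on a small quasi-ball around a near-maximum point and optimize in $r$, while the paper averages $|f(x)-f(x\beta z)|+|f(x\beta z)|$ over $\{|z|\le 1\}$ with $r=\log(1/\beta)$ and then chooses $\beta=(e^{p}+M^{1/\alpha})^{-1}$, $M:=\|\R^{a/\nu}f\|_{L^{q}(\G)}$. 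Also, the left/right translation \enquote{delicacy} you flag is not an actual obstacle: writing the potential as a right convolution $f=g\ast K_{a}$, the relevant increment is $K_{a}(wh)-K_{a}(w)$ with $h=y^{-1}x$, which is precisely the right-translation form \eqref{Fol_prop2_ineq1} (this is how Lemma \ref{Lip_lem} is set up); equivalently, a left increment of $K_{a}$ is a right increment of the inverted kernel $w\mapsto K_{a}(w^{-1})$, which is again smooth away from the origin and homogeneous of the same degree. No conjugation $w^{-1}hw$ ever enters.

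The genuine gap is that your proof only covers $0<\beta:=a-Q/q<1$, whereas the theorem assumes merely $a>Q/q$. For $\beta\ge 1$ your pivotal H\"older estimate $|f(x)-f(y)|\le C M|xy^{-1}|^{\beta}$ is unobtainable by this method and is in general false: your far-region integral $\int_{|w|>2|h|}\bigl(|h|\,|w|^{a-Q-1}\bigr)^{q'}dw$ diverges at infinity precisely when $(a-Q-1)q'+Q=q'(\beta-1)\ge 0$, and first-difference bounds such as \eqref{Fol_prop2_ineq1} can never gain more than one power of $|h|$, so the method is intrinsically capped at $\beta<1$ (at $\beta=1$ one only gets a Zygmund-type bound via second differences \eqref{Fol_prop2_ineq2}, as in the $\alpha=1$ case of Lemma \ref{Lip_lem}). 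Indeed, already for $\G=(\Rn,+)$ a global estimate $|f(x)-f(y)|\le CM|x-y|^{\beta}$ with $\beta>1$ forces $f$ to be constant, hence $f\equiv 0$ in $L^{p}$, so no such inequality can hold for nontrivial $f$. This missing range is exactly why the paper's proof has a second half: for $a-Q/q\ge 1$ it fixes $a_{0}$ with $Q/q<a_{0}<Q/q+1$, splits $f=\chi_{L}(\R)f+(1-\chi_{L}(\R))f$ by functional calculus, applies the already-proved case to the high-frequency part together with the spectral comparison \eqref{BGW5} (which replaces $\R^{a_{0}/\nu}$ by $\R^{a/\nu}$ on the range of $1-\chi_{L}(\R)$), and controls the low-frequency part by the Nikolskii inequality \eqref{BGW6} from \cite[Theorem 2.1]{CR17}, using $\|f\|_{L^{p}(\G)}\le 1$. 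Note also that one cannot cheaply repair your argument by interpolating $\|\R^{a_{0}/\nu}f\|_{L^{q}}\lesssim\|f\|_{L^{q}}^{1-a_{0}/a}\|\R^{a/\nu}f\|_{L^{q}}^{a_{0}/a}$ via Theorem \ref{Sob-emb-thm}: when $q<p$ the hypothesis $\|f\|_{L^{p}_{Q/p}(\G)}\le 1$ gives no control on $\|f\|_{L^{q}(\G)}$, so some device like the spectral truncation is genuinely needed. Until the case $a-Q/q\ge 1$ is supplied, your proposal proves a strictly weaker statement than Theorem \ref{BGW_thm}.
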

\begin{rem} In the case $\G=(\Rn,+)$ and $\R=-\triangle$ is the Laplacian, the inequality \eqref{BGW1} was obtained in \cite{BW80} by employing Fourier transform methods, and in \cite{E89} for $n/p, m\in \mathbb{Z}$, and in \cite{Oz95} for the general case without using the Fourier transform.
\end{rem}
We recall the Lipschitz spaces and obtain an estimate on nilpotent group $\G$, which will be used in the proof of Theorem \ref{BGW_thm}. So, let $C_{b}(\G)$ be the space of bounded continuous functions on $\G$. Then we define
$$\Gamma_{\alpha}:=\{f\in C_{b}(\G):|f|_{\alpha}:=\sup_{x,y}|f(xy)-f(x)|/|y|^{\alpha}<\infty\}$$
for $0<\alpha<1$, and when $\alpha=1$, we define
$$\Gamma_{1}:=\{f\in C_{b}(\G):|f|_{1}:=\sup_{x,y}|f(xy)+f(xy^{-1})-2f(x)|/|y|<\infty\}.$$
Note that the Lipschitz space $\Gamma_{\alpha}$ with $0<\alpha\leq1$ is a Banach space with norm $\|f\|_{\Gamma_{\alpha}(\G)}=\|f\|_{L^{\infty}(\G)}+|f|_{\alpha}$.
\begin{lem}\label{Lip_lem}Let $\mathbb{G}$ be a homogeneous Lie group of homogeneous dimension $Q$. Let $0<\lambda<Q$, $0<\alpha\leq1$ and $1<p\leq\infty$ with $\alpha=\lambda-Q/p$. Let $K$ be a kernel of type $\lambda$. Then we have
\begin{equation}\label{lem_Holder}
|Tf|_{\alpha}\leq C \|f\|_{L^{p}(\G)}
\end{equation}
for the mapping $T:f\mapsto f\ast K$.
\end{lem}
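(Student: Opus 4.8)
The plan is to work directly with the convolution $Tf=f\ast K$ and to estimate the first- and second-order difference quotients defining the seminorm $|\cdot|_{\alpha}$. Since $|\cdot|_{\alpha}$ is insensitive to additive constants, I may manipulate the difference integrals without worrying about the (only conditional) convergence of $Tf(x)$ itself; the estimates below will show that all the difference integrals converge absolutely. Writing $(f\ast K)(x)=\int_{\G}f(xw^{-1})K(w)\,dw$ and performing the measure-preserving change of variables $w\mapsto wy$ (the group being nilpotent, hence unimodular) in the integral representing $Tf(xy)$, I obtain
\begin{equation*}
Tf(xy)-Tf(x)=\int_{\G}f(xw^{-1})\,[K(wy)-K(w)]\,dw,
\end{equation*}
together with the analogous identity for the second difference $Tf(xy)+Tf(xy^{-1})-2Tf(x)$, whose integrand is $K(wy)+K(wy^{-1})-2K(w)$. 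In each case I would split the integral over the far region $\{|w|\geq c|y|\}$ and the near region $\{|w|<c|y|\}$, with the threshold $c$ fixed in terms of the constants of Propositions \ref{Fol_prop1} and \ref{Fol_prop2}.

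Consider first $0<\alpha<1$. On the far region $|w|\geq 2|y|$ I would apply the mean-value estimate \eqref{Fol_prop2_ineq1} to the function $K$, which is homogeneous of degree $\lambda-Q$, giving $|K(wy)-K(w)|\leq C|y|\,|w|^{\lambda-Q-1}$; Hölder's inequality together with the polar decomposition \eqref{EQ:polar} then bounds the far contribution by
\begin{equation*}
C|y|\,\|f\|_{L^{p}(\G)}\left(\int_{2|y|}^{\infty}r^{(\lambda-Q-1)p'+Q-1}\,dr\right)^{1/p'}\leq C|y|^{\alpha}\|f\|_{L^{p}(\G)},
\end{equation*}
where the $r$-integral converges at infinity precisely because $\alpha=\lambda-Q/p<1$; its $(1/p')$-th power equals a constant times $|y|^{\alpha-1}$, which combines with the prefactor $|y|$ to produce $|y|^{\alpha}$. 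On the near region $|w|<2|y|$ I would instead use the crude pointwise bound $|K(wy)-K(w)|\leq|K(wy)|+|K(w)|$: the term with $K(w)$ is controlled by Hölder exactly as above, the integrability of $|w|^{(\lambda-Q)p'}$ near the origin being exactly the condition $\alpha=\lambda-Q/p>0$, while the term with $K(wy)$ is reduced to the same form by the substitution $v=wy$, the quasi-triangle inequality of Proposition \ref{Fol_prop1} ensuring that the image region stays inside a ball of radius comparable to $|y|$ and the $L^{p}$-norm of $f$ being preserved by translation and inversion. Both near-region terms are thus dominated by $C|y|^{\alpha}\|f\|_{L^{p}(\G)}$, and summing the two regions gives $|Tf(xy)-Tf(x)|\leq C|y|^{\alpha}\|f\|_{L^{p}(\G)}$, which is \eqref{lem_Holder}.

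For the endpoint $\alpha=1$ the same scheme applies to the second difference, now invoking \eqref{Fol_prop2_ineq2}: on the far region $|y|\leq\varepsilon|w|$ one has $|K(wy)+K(wy^{-1})-2K(w)|\leq C|y|^{2}|w|^{\lambda-Q-2}$, and the extra power of decay makes the corresponding $r$-integral converge (here the relevant condition is $\alpha=1<2$), yielding a factor $|y|^{2}\cdot|y|^{\alpha-2}=|y|$. On the near region one estimates the three kernel terms $|K(wy)|$, $|K(wy^{-1})|$ and $|K(w)|$ separately, each by the substitutions already used, and each contributes $C|y|\,\|f\|_{L^{p}(\G)}$. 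This proves $|Tf|_{1}\leq C\|f\|_{L^{p}(\G)}$.

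The main obstacle is the bookkeeping at the two endpoints of the $|w|$-integration: the far region requires decay of the kernel difference, which Folland's mean-value estimates supply under $\alpha<1$ (respectively under $\alpha<2$ in the second-order case), whereas the near region requires local integrability of the raw kernel, which is the condition $\alpha>0$; the point is to check that, after applying Hölder and integrating in polar coordinates, the two contributions produce \emph{exactly} the same homogeneity $|y|^{\alpha}$. A secondary delicate point is that the substitutions $w\mapsto wy$ and $w\mapsto wy^{-1}$ used in the near region deform the domain of integration, and one must verify via Proposition \ref{Fol_prop1} that the deformed domains remain contained in balls of radius comparable to $|y|$, so that the homogeneity count is unaffected.
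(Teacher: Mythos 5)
Your proposal is correct and follows essentially the same route as the paper's proof: the same near/far splitting of the difference integrals at radius comparable to $|y|$, the same use of Propositions \ref{Fol_prop1} and \ref{Fol_prop2} (first-order estimate for $0<\alpha<1$, second-order for $\alpha=1$), and the same H\"older-plus-polar-coordinates computation producing the matching homogeneity $|y|^{\alpha}$ from both regions. The only differences are cosmetic: you make explicit the change of variables behind the difference identity and the exponent bookkeeping that the paper leaves implicit.
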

\begin{proof}[Proof of Lemma \ref{Lip_lem}] First, let us consider the case $0<\alpha<1$. We know that
\begin{equation}\label{lem_Holder1}
Tf(xy)-Tf(x)=\int_{\G}f(xz^{-1})(K(zy)-K(z))dz.
\end{equation}
As in the case of stratified groups (see \cite[Theorem 5.14]{F75}), we write it as follows
$$Tf(xy)-Tf(x)=\int_{|z|>2|y|}f(xz^{-1})(K(zy)-K(z))dz$$
\begin{equation}\label{lem_Holder2}
+\int_{|z|\leq2|y|}f(xz^{-1})(K(zy)-K(z))dz=:I_{1}+I_{2}.
\end{equation}
Then, we estimate $I_{1}$ using \eqref{Fol_prop2_ineq1} and H\"{o}lder's inequality
\begin{equation}\label{lem_Holder3}
\begin{split}
|I_{1}|&\leq C\|f\|_{L^{p}(\G)}\left(\int_{|z|>2|y|}|y|^{p'}|z|^{(\lambda-Q-1)p'}dz\right)^{1/p'}\\&
\leq C\|f\|_{L^{p}(\G)}|y|(2|y|)^{\lambda-Q-1+Q/p'}\leq C\|f\|_{L^{p}(\G)}|y|^{\alpha}.
\end{split}
\end{equation}
Now for $I_{2}$, by Proposition \ref{Fol_prop1} and $|z|\leq2|y|$ there exists a constant $M_{1}\geq2$ such that $|zy|\leq M_{1}|y|$, so by H\"{o}lder's inequality we have
\begin{equation}\label{lem_Holder4}
\begin{split}
|I_{2}|&\leq \|f\|_{L^{p}(\G)}\left(2\int_{|z|\leq M_{1}|y|}|K(z)|^{p'}dz\right)^{1/p'}\\&
\leq C\|f\|_{L^{p}(\G)}\left(\int_{|z|\leq M_{1}|y|}|z|^{(\lambda-Q)p'}dz\right)^{1/p'}\leq C\|f\|_{L^{p}(\G)}|y|^{\alpha}.
\end{split}
\end{equation}
Combining \eqref{lem_Holder3} and \eqref{lem_Holder4}, we obtain \eqref{lem_Holder} for $0<\alpha<1$.

Now, in the case $\alpha=1$, we prove it in the same way as above but using \eqref{Fol_prop2_ineq2}. So, we write for some positive number $M_{2}$:
$$
Tf(xy)+Tf(xy^{-1})-2Tf(x)=\int_{\G}f(xz^{-1})(K(zy)+K(zy^{-1})-2K(z))dz$$
$$=\int_{|z|>M_{2}|y|}f(xz^{-1})(K(zy)+K(zy^{-1})-2K(z))dz$$
\begin{equation}\label{lem_Holder5}
+\int_{|z|\leq M_{2}|y|}f(xz^{-1})(K(zy)+K(zy^{-1})-2K(z))dz=:I_{3}+I_{4}.
\end{equation}
For $I_{3}$, using \eqref{Fol_prop2_ineq2} and H\"{o}lder's inequality one calculates
\begin{equation}\label{lem_Holder6}
\begin{split}
|I_{3}|&\leq C\|f\|_{L^{p}(\G)}\left(\int_{|z|>M_{2}|y|}|y|^{2p'}|z|^{(\lambda-Q-2)p'}dz\right)^{1/p'}\\&
\leq C\|f\|_{L^{p}(\G)}|y|^{2}(M_{2}|y|)^{\lambda-Q-2+Q/p'}\leq C\|f\|_{L^{p}(\G)}|y|,
\end{split}
\end{equation}
since $\alpha=1$.
Now for $I_{4}$, by Proposition \ref{Fol_prop1} and $|z|\leq M_{2}|y|$ there exists a constant $M_{3}\geq M_{2}$ such that $|zy|\leq M_{3}|y|$ and $|zy^{-1}|\leq M_{3}|y|$, so by H\"{o}lder's inequality we have
\begin{equation}\label{lem_Holder7}
\begin{split}
|I_{4}|&\leq \|f\|_{L^{p}(\G)}\left(4\int_{|z|\leq M_{3}|y|}|K(z)|^{p'}dz\right)^{1/p'}\\&
\leq C\|f\|_{L^{p}(\G)}\left(\int_{|z|\leq M_{3}|y|}|z|^{(\lambda-Q)p'}dz\right)^{1/p'}\leq C\|f\|_{L^{p}(\G)}|y|.
\end{split}
\end{equation}
From \eqref{lem_Holder6} and \eqref{lem_Holder7}, we obtain \eqref{lem_Holder} for $\alpha=1$.
\end{proof}
Now we are ready to prove Theorem \ref{BGW_thm}.
\begin{proof}[Proof of Theorem \ref{BGW_thm}] Let us first prove \eqref{BGW1} when $a-Q/q=:\alpha\in (0,1)$. Since $\R^{-a/\nu}$ is the Riesz potential (see e.g. \cite[Section 4.3.4]{FR16}), by Lemma \ref{Lip_lem} we have \begin{equation}\label{BGW2} |f(xy)-f(x)|\leq C\|\R^{a/\nu}f\|_{L^{q}(\G)}|y|^{\alpha}.
\end{equation}
Let $z\in\G$ with $|z|\leq1$ and $0<\beta<e^{-p}$. Then, \eqref{BGW2} implies that
\begin{equation}\label{BGW3} |f(x)-f(x\beta z)|\leq C\beta^{\alpha}\|\R^{a/\nu}f\|_{L^{q}(\G)}.
\end{equation}
Applying H\"{o}lder's inequality and Theorem \ref{crit_GN_thm}, one gets
\begin{equation}\label{BGW4}
\begin{split}
\int_{|z|\leq1}|f(x\beta z)|dz&\leq\left(\frac{|\wp|}{Q}\right)^{1/r'}\left(\int_{|z|\leq1}|f(x\beta  z)|^{r}dz\right)^{1/r}\leq \left(\frac{|\wp|}{Q}\right)^{1/r'} \beta^{-Q/r} \|f\|_{L^{r}(\G)}\\&
\leq C \beta^{-Q/r} r^{1/p'}\|f\|_{L^{p}_{Q/p}(\G)}\leq C \beta^{-Q/r} r^{1/p'}
\end{split}
\end{equation}
for any $r$ with $p\leq r<\infty$. We take $r=\log(1/\beta)$ to obtain \begin{equation}\label{BGW4_1}
\int_{|z|\leq1}|f(x\beta z)|dz\leq C\left(\log\left(\frac{1}{\beta}\right)\right)^{1/p'}
\end{equation}
for any $\beta$ with $0<\beta<e^{-p}$, where the constant $C$ depends only on $p$ and $Q$.

Then, by a direct calculation, one has
\begin{equation*}
    \begin{split}
     |f(x)|=|f(x)|\cdot\frac{Q}{|\wp|}\int_{|z|\leq1}dz&\leq \frac{Q}{|\wp|}\int_{|z|\leq1}(|f(x)-f(x\beta z)|+|f(x\beta z)|)dz\\&
     \leq C\beta^{\alpha}\|\R^{a/\nu}f\|_{L^{q}(\G)}+C\left(\log\left(\frac{1}{\beta}\right)\right)^{1/p'},
    \end{split}
\end{equation*}
which implies \eqref{BGW1}, after setting
$\beta=1/\left(e^{p}+\|\R^{a/\nu}f\|^{1/\alpha}_{L^{q}(\G)}\right).$

Now it remains to consider the case $a-Q/q\geq1$. Let $a_{0}\in\mathbb{R}$ with $Q/q+1>a_{0}>Q/q$. As in \cite[Section 2]{CR17} (see also \cite[Section 3]{CR16}), we consider the operator $\chi_{L}(\R)$ for every $L>0$, defined by functional calculus, where $\chi_{L}$ is the characteristic function of $[0,L]$. Then, taking into account these and using \eqref{BGW1} for $a\geq Q/q+1>a_{0}>Q/q$ as already proved, we have
\begin{equation}\label{BGW5}
\begin{split}
\|(1-\chi_{L}(\R))f\|_{L^{\infty}(\G)}&\leq C(1+\log(1+\|\R^{a_{0}/\nu}((1-\chi_{L}(\R))f)\|_{L^{q}(\G)}))^{1/p'}\\&
\leq C(1+\log(1+\|\R^{a/\nu}((1-\chi_{L}(\R))f)\|_{L^{q}(\G)}))^{1/p'}.
\end{split}
\end{equation}
For $\chi_{L}(\R)f$, we use the Nikolskii inequality on graded groups \cite[Theorem 2.1]{CR17}:
\begin{equation}\label{BGW6}
\|\chi_{L}(\R)f\|_{L^{\infty}(\G)}\lesssim \|\chi_{L}(\R)f\|_{L^{p}}\lesssim \|f\|_{L^{p}(\G)}\leq1.
\end{equation}
Finally, we obtain
\begin{equation*}
\begin{split}\|f\|_{L^{\infty}(\G)}&\leq \|(1-\chi_{L}(\R))f\|_{L^{\infty}(\G)}+\|\chi_{L}(\R)f\|_{L^{\infty}(\G)}\\&\leq C(1+\log(1+\|\R^{a/\nu}f\|_{L^{q}(\G)}))^{1/p'},
\end{split}
\end{equation*}
completing the proof.
\end{proof}
We can also obtain the following estimate using Theorem \ref{crit_GN_thm}:
\begin{thm} \label{BW_thm} Let $\mathbb{G}$ be a graded Lie group of homogeneous dimension $Q$ and let $1<p<\infty$. Then we have
\begin{equation}\label{BW1}
\int_{\Omega}|f(x)|dx\leq C_{4} \|f\|_{L^{p}_{Q/p}(\G)}|\Omega|(1+|\log|\Omega||)^{1/p'}
\end{equation}
for any function $f\in L^{p}_{Q/p}(\G)$ and for any Lebesgue measurable set $\Omega$ with $|\Omega|<\infty$, where the constant $C_{4}$ depends only on $p$ and $Q$, and $|\Omega|$ denotes the Lebesgue measure of $\Omega$.
\end{thm}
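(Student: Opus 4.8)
The plan is to combine a single application of Hölder's inequality with the critical Gagliardo-Nirenberg inequality of Theorem \ref{crit_GN_thm}, and then to optimise the exponent $q$ as a function of the measure $|\Omega|$. We may assume $0<|\Omega|<\infty$, since for $|\Omega|=0$ the inequality is trivial. First I would write, for any $q$ with $p\leq q<\infty$,
$$\int_{\Omega}|f(x)|dx=\int_{\G}|f(x)|\chi_{\Omega}(x)dx\leq\|f\|_{L^{q}(\G)}\|\chi_{\Omega}\|_{L^{q'}(\G)}=|\Omega|^{1-1/q}\|f\|_{L^{q}(\G)},$$
where $1/q+1/q'=1$ and $\chi_{\Omega}$ denotes the indicator function of $\Omega$.

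Next I would control $\|f\|_{L^{q}(\G)}$ by the critical Gagliardo-Nirenberg inequality \eqref{crit_GN_ineq}, which for $p\leq q<\infty$ gives
$$\|f\|_{L^{q}(\G)}\leq C_{1}q^{1-1/p}\|\R^{\frac{Q}{\nu p}}f\|_{L^{p}(\G)}^{1-p/q}\|f\|_{L^{p}(\G)}^{p/q}.$$
Since the inhomogeneous Sobolev norm dominates both factors on the right, i.e.\ $\|f\|_{L^{p}(\G)}\leq C\|f\|_{L^{p}_{Q/p}(\G)}$ and $\|\R^{\frac{Q}{\nu p}}f\|_{L^{p}(\G)}\leq C\|f\|_{L^{p}_{Q/p}(\G)}$ (see \cite[Section 4.4]{FR16}), and since the two exponents $1-p/q$ and $p/q$ sum to $1$, the product is bounded by $C\|f\|_{L^{p}_{Q/p}(\G)}$. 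Combining this with the Hölder step yields
$$\int_{\Omega}|f(x)|dx\leq C|\Omega|\cdot|\Omega|^{-1/q}q^{1-1/p}\|f\|_{L^{p}_{Q/p}(\G)}$$
for every $q\geq p$, with $C$ depending only on $p$ and $Q$.

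It then remains to choose $q$ so that $|\Omega|^{-1/q}q^{1-1/p}\leq C(1+|\log|\Omega||)^{1/p'}$. The natural choice is $q:=\max\{p,\,p'|\log|\Omega||\}$, which is admissible because $q\geq p$ and $q<\infty$; I would split into the two cases defining this maximum. When $q=p'|\log|\Omega||$, a short computation gives $|\Omega|^{-1/q}=\exp\bigl(-\tfrac{\log|\Omega|}{p'|\log|\Omega||}\bigr)\leq e^{1/p'}$ and $q^{1-1/p}=(p')^{1/p'}|\log|\Omega||^{1/p'}$, so the product is $\leq C|\log|\Omega||^{1/p'}\leq C(1+|\log|\Omega||)^{1/p'}$; when $q=p$ one has $p'|\log|\Omega||\leq p$, hence $|\Omega|^{-1/p}\leq e^{1/p'}$ is bounded and the product is dominated by a constant, again $\leq C(1+|\log|\Omega||)^{1/p'}$. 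Substituting this into the displayed bound proves \eqref{BW1}. The only point requiring care, and the main (mild) obstacle, is to make the estimate uniform across the regimes $|\Omega|<1$ and $|\Omega|\geq1$, where $\log|\Omega|$ changes sign; this is exactly what forces the absolute value $|\log|\Omega||$ together with the additive $1$ in $(1+|\log|\Omega||)^{1/p'}$, and it is handled cleanly by the single choice $q=\max\{p,\,p'|\log|\Omega||\}$.
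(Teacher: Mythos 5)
Your proposal is correct and follows essentially the same route as the paper: H\"older's inequality to get $|\Omega|^{1/q'}\|f\|_{L^{q}(\G)}$, then the critical Gagliardo--Nirenberg inequality of Theorem \ref{crit_GN_thm} to bound $\|f\|_{L^{q}(\G)}\leq Cq^{1/p'}\|f\|_{L^{p}_{Q/p}(\G)}$, and finally a logarithmic choice of $q$ split into two regimes (the paper takes $q=\log(1/|\Omega|)$ when $|\Omega|\leq e^{-p}$ and $q=p$ when $|\Omega|>e^{-p}$, which matches your $q=\max\{p,\,p'|\log|\Omega||\}$ up to the harmless factor $p'$). Your handling of the sign of $\log|\Omega|$ is a slightly more explicit version of what the paper does implicitly, and no gap remains.
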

\begin{rem} When $\G=(\Rn,+)$ and $\R=-\triangle$ is the Laplacian, the inequality \eqref{BW1} was obtained in \cite[Lemma 2]{BW80} and in \cite[Theorem 3]{Oz95}. In \cite{BW80}, using this estimate and Morrey's technique the authors proved the Brezis-Wainger inequality: for any function $f\in L^{p}_{n/p+1}(\Rn)$ and for each $x,y\in \Rn$, the inequality
$$|f(x)-f(y)|\leq C\|f\|_{L^{p}_{n/p+1}(\Rn)}|x-y|(1+|\log|x-y||)^{1/p'},\;\;1<p<\infty,$$
holds true, where the constant $C$ depends only on $n$ and $p$.
\end{rem}
\begin{proof}[Proof of Theorem \ref{BW_thm}] Using H\"{o}lder's inequality, we get
\begin{equation}\label{BW2}\int_{\Omega}|f(x)|dx\leq |\Omega|^{1/q'}\|f\|_{L^{q}(\G)}.
\end{equation}
Then, by Theorem \ref{crit_GN_thm}, we can deduce that
\begin{equation}\label{BW3}
\int_{\Omega}|f(x)|dx\leq C|\Omega|^{1/q'}q^{1/p'}\|f\|_{L^{p}_{Q/p}(\G)},
\end{equation}
for $p\leq q <\infty$ with the constant $C$ depending only $p$ and $Q$. Plugging $q=p$ into \eqref{BW2} for $|\Omega|>e^{-p}$, hence $|\Omega|^{1/q'}=|\Omega|^{1-1/p}<e|\Omega|$, one gets
\begin{equation}\label{BW4}\int_{\Omega}|f(x)|dx\leq e |\Omega|\|f\|_{L^{p}(\G)}.
\end{equation}
Now plugging $q=\log(1/|\Omega|)$ into \eqref{BW3} for $0<|\Omega|\leq e^{-p}$, we have
\begin{equation}\label{BW5}
\int_{\Omega}|f(x)|dx\leq C|\Omega||\log|\Omega||^{1/p'}\|f\|_{L^{p}_{Q/p}(\G)}.
\end{equation}
Combining \eqref{BW4} and \eqref{BW5} we obtain \eqref{BW1}.
\end{proof}
\section{Best constants and nonlinear Schr\"{o}dinger type equations}
\label{SEC:applications}
In this section using the critical case of the Gagliardo-Nirenberg inequality \eqref{crit_GN_ineq} we show the existence of least energy solutions for nonlinear Schr\"{o}dinger type equations, and we obtain a sharp expression for the smallest positive constant $C_{1}$ in \eqref{crit_GN_ineq}. For non-critical case on nilpotent Lie group, when $a\neq Q/p$ in the inequality \eqref{GN1}, similar results were obtained in \cite{CR13} on the Heisenberg group and in \cite{RTY17} on graded groups.

Let $\R$ be a positive Rockland operator of homogeneous degree $\nu$, on a graded group $\G$ of homogeneous dimension $Q$. Let $1<p<q<\infty$.
We consider the nonlinear equation with the power nonlinearity
\begin{equation}\label{nonlinear}
\mathcal{R}^{\frac{Q}{\nu p}}(|\mathcal{R}^{\frac{Q}{\nu p}}u|^{p-2}\mathcal{R}^{\frac{Q}{\nu p}}u)+
|u|^{p-2}u=|u|^{q-2}u, \quad u\in L^{p}_{Q/p}(\mathbb{G}).
\end{equation}
Such an equation appears naturally in the analysis of best constants of the established critical inequalities. We prove the existence of least energy solutions to \eqref{nonlinear} and their relation to the best constants in the Gagliardo-Nirenberg inequalities and hence, in view of Theorem \ref{equiv}, also to the best constants in the Trudinger inequalities.

For example, for $p=2$, if $\mathbb{G}$ is a stratified Lie group of homogeneous dimension $Q$ and $\R=\mathcal{L}$ is the positive sub-Laplacian, equation \eqref{nonlinear} becomes
\begin{equation}\label{EQ:sL}
\mathcal{L}^{\frac{Q}{2}}u+
u=|u|^{q-2}u, \quad u\in L^{2}_{Q/2}(\mathbb{G}),
\end{equation}
and if $p\not=2$, equation \eqref{nonlinear} can be regarded as the $p$-sub-Laplacian version of \eqref{EQ:sL}. In the Euclidean case, when $Q=2$, $q=4$ and $\L=-\Delta$, one can note that if $u(x)$ is a solution of \eqref{EQ:sL} then the function $w(x)=u(x)e^{it/2}$ solves the following nonlinear Schr\"{o}dinger equation
$$2iw_{t}+\Delta w +|w|^{2}w=0, \;\;t\in \mathbb{R}^{+},\;\;x\in \mathbb{R}^{2}.$$
Such equations arise in modeling the propagation of a thin electromagnetic beam through a medium with an index of refraction dependent on the field intensity (see for example \cite{CGT64} and \cite[Section V]{Wei83}).

Now, let us give some notations and definitions for this section.

\begin{defn}
A function $u\in L^{p}_{Q/p}(\mathbb{G})$ is said to be a solution of \eqref{nonlinear} if and only if for any function $\psi\in L^{p}_{Q/p}(\mathbb{G})$ the equality
$$\int_{\mathbb{G}}(|\mathcal{R}^{\frac{Q}{\nu p}}u(x)|^{p-2}\mathcal{R}^{\frac{Q}{\nu p}}
u(x)\overline{\mathcal{R}^{\frac{Q}{\nu p}}\psi(x)}dx+
\int_{\mathbb{G}}(|u(x)|^{p-2}u(x)\overline{\psi(x)}$$
\begin{equation}\label{solution}
-|u(x)|^{q-2}u(x)\overline{\psi(x)})dx=0
\end{equation}
holds true.
\end{defn}

We define the functionals $\mathfrak{L}:L^{p}_{Q/p}(\mathbb{G})\to \mathbb R$ and $\mathfrak{I}:L^{p}_{Q/p}(\mathbb{G})\to \mathbb R$ acting on $L^{p}_{Q/p}(\mathbb{G})\cap L^{q}(\G)$ as follows:
\begin{equation}\label{L}
\mathfrak{L}(u):=\frac{1}{p}\int\limits_{\mathbb{G}}|\mathcal{R}^{\frac{Q}{\nu p}}u(x)|^{p}dx+
\frac{1}{p}\int\limits_{\mathbb{G}}|u(x)|^{p}dx
-\frac{1}{q}\int\limits_{\mathbb{G}}|u(x)|^{q}dx
\end{equation}
and
\begin{equation}\label{I}
\mathfrak{I}(u):=\int\limits_{\mathbb{G}}(|\mathcal{R}^{\frac{Q}{\nu p}}u(x)|^{p}+
|u(x)|^{p}-|u(x)|^{q})dx.
\end{equation}

We use the Nehari set
\begin{equation}\label{N}
\mathcal{N}:=\{u\in L^{p}_{Q/p}(\mathbb{G})\ \backslash\{0\}: \I(u)=0\}
\end{equation}
and denote
\begin{equation}\label{d}
d:=\inf\{\L(u):u\in\mathcal{N}\}.
\end{equation}

\begin{defn}\label{def}
Let
$$\Gamma=\{\phi\in L^{p}_{Q/p}(\mathbb{G}):\L'(\phi)=0\;\;{\rm and}\;\;\phi\neq0\}$$
and
$$\mathcal{G}=\{u\in \Gamma: \L(u)\leq \L(\upsilon) \;\;{\rm for \;\;any}\;\;\upsilon\in\Gamma\}$$
be the set of the solutions and the set of least energy solutions of \eqref{nonlinear}, respectively.
\end{defn}

\begin{thm}\label{thm1} Let $\mathbb{G}$ be a graded Lie group of homogeneous dimension $Q$, let $1<p<q<\infty$. Then the Schr\"{o}dinger type equation \eqref{nonlinear} has a least energy solution $\phi\in L^{p}_{Q/p}(\mathbb{G})$.

Furthermore, we have $d=\L(\phi)$.
\end{thm}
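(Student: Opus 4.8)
The plan is to prove the existence of a least energy solution by the variational method, realizing the least energy level $d$ defined in \eqref{d} as a minimum attained on the Nehari manifold $\mathcal N$ of \eqref{N}, and then showing that a minimizer is a critical point of $\L$, hence a solution of \eqref{nonlinear}. First I would set up the variational framework: the functional $\L$ of \eqref{L} is $C^1$ on $L^p_{Q/p}(\G)$ (using that $p<q<\infty$ together with the subcritical Sobolev embedding guaranteed by the hypothesis $p\leq Q/\gamma$, which makes the $L^q$-term finite and differentiable), and its derivative $\L'$ acts as in \eqref{solution}. A direct computation shows that $\langle\L'(u),u\rangle=\I(u)$, so that critical points of $\L$ automatically lie on $\mathcal N$, and on $\mathcal N$ one has the convenient identity $\L(u)=\left(\tfrac1p-\tfrac1q\right)\int_{\G}|u|^q\,dx$, which is manifestly nonnegative and shows $d\geq0$.

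Next I would establish that $d$ is attained. The key structural feature is scaling: for any $u\neq0$ the ray $t\mapsto tu$ meets $\mathcal N$ at exactly one point $t(u)>0$, because $\I(tu)=t^p(\|\R^{Q/(\nu p)}u\|_{L^p}^p+\|u\|_{L^p}^p)-t^q\|u\|_{L^q}^q$ changes sign once for $t>0$ as $q>p$; this both shows $\mathcal N\neq\emptyset$ and gives $d=\inf_{u\neq0}\max_{t>0}\L(tu)$. Then I would take a minimizing sequence $(u_k)\subset\mathcal N$ with $\L(u_k)\to d$. On $\mathcal N$ the identity above bounds $\int_{\G}|u_k|^q\,dx$, and through $\I(u_k)=0$ this bounds $\|\R^{Q/(\nu p)}u_k\|_{L^p}^p+\|u_k\|_{L^p}^p$, so $(u_k)$ is bounded in $L^p_{Q/p}(\G)$. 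By reflexivity I extract a weakly convergent subsequence $u_k\rightharpoonup\phi$; I would also want to rule out $\int|u_k|^q\to0$ (which would force $d=0$ and a trivial limit) by deriving a positive lower bound on $\int_{\G}|u_k|^q$ on $\mathcal N$ directly from the critical Gagliardo-Nirenberg inequality of Theorem \ref{crit_GN_thm}, which yields $\|u\|_{L^q}\leq C q^{1-1/p}\|\R^{Q/(\nu p)}u\|_{L^p}^{1-p/q}\|u\|_{L^p}^{p/q}$ and, combined with $\I(u)=0$, bars the norms from collapsing.

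The main obstacle is the lack of compactness: weak convergence does not immediately give $u_k\to\phi$ strongly, and on a noncompact group $\G$ there is no compact Sobolev embedding, so mass can escape to infinity (the usual concentration-compactness dichotomy) or the $L^q$-norm can drop in the limit. I would handle this by exploiting the translation-invariance of $\R$ and of all the norms: after replacing $u_k$ by suitable left-translates, I apply a concentration-compactness argument (in the spirit of the cited works \cite{CR13} and \cite{RTY17}) to show the minimizing sequence cannot vanish or split, so that the shifted sequence converges weakly to a nonzero $\phi$ with $\int_{\G}|\phi|^q\,dx>0$. By weak lower semicontinuity of the $L^p_{Q/p}$-seminorms and the Brezis-Lieb lemma applied to the $L^q$-term, $\phi$ satisfies $\I(\phi)\leq0$; scaling $\phi$ back onto $\mathcal N$ by a factor $t(\phi)\leq1$ then gives $\L(t(\phi)\phi)\leq\liminf\L(u_k)=d$, forcing $t(\phi)=1$, so $\phi\in\mathcal N$ and $\L(\phi)=d$.

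Finally I would upgrade the constrained minimizer to an unconstrained critical point. Using a Lagrange multiplier for the constraint $\I(\phi)=0$, one obtains $\L'(\phi)=\mu\,\I'(\phi)$ for some $\mu\in\mathbb R$; testing against $\phi$ and using $\langle\L'(\phi),\phi\rangle=\I(\phi)=0$ together with $\langle\I'(\phi),\phi\rangle=p\,\I(\phi)+(p-q)\int_{\G}|\phi|^q\,dx=(p-q)\int_{\G}|\phi|^q\,dx\neq0$ forces $\mu=0$. Hence $\L'(\phi)=0$, so $\phi$ solves \eqref{nonlinear} in the weak sense of \eqref{solution}, and since $d=\min_{\mathcal N}\L$ coincides with the least energy over all nontrivial critical points $\Gamma$ of Definition \ref{def}, $\phi$ is a least energy solution with $d=\L(\phi)$, as claimed.
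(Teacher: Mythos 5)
Your overall framework (Nehari manifold, nonvanishing via the critical Gagliardo--Nirenberg inequality, translation by the group action to get a nonzero weak limit, Lagrange multiplier to pass from the constrained minimiser to a critical point) matches the paper's proof, and most individual steps correspond to the paper's lemmas: the boundedness of the minimising sequence, the lower bound $\inf_{\mathcal N}\|u\|_{L^{p}_{Q/p}(\G)}>0$ (Lemma \ref{LM: 2.2}), the exclusion of vanishing via the concentration compactness result of \cite{ST02} together with Lemma \ref{LM: 2.3}, and the final multiplier argument (Lemma \ref{LM: 2.4}).

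However, there is a genuine gap at the most delicate point: your claim that weak lower semicontinuity of the $L^{p}_{Q/p}$-norm together with the Brezis--Lieb lemma yields $\I(\phi)\leq 0$. The inequalities go the wrong way. Writing $\psi_k=\omega_k-\phi$, weak lower semicontinuity gives $\|\phi\|^{p}_{L^{p}_{Q/p}(\G)}\leq\liminf_k\|\omega_k\|^{p}_{L^{p}_{Q/p}(\G)}$, the constraint gives $\|\omega_k\|^{p}_{L^{p}_{Q/p}(\G)}=\|\omega_k\|^{q}_{L^{q}(\G)}$, and Brezis--Lieb gives $\|\omega_k\|^{q}_{L^{q}(\G)}=\|\phi\|^{q}_{L^{q}(\G)}+\|\psi_k\|^{q}_{L^{q}(\G)}+o(1)$; combining these yields only
\[
\I(\phi)\leq \liminf_{k\to\infty}\|\psi_k\|^{q}_{L^{q}(\G)},
\]
whose right-hand side is nonnegative, so nothing is gained: $\I(\phi)\leq 0$ would follow only if one already knew that no $L^{q}$-mass escapes to infinity, which is precisely the splitting (dichotomy) scenario you are trying to exclude. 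The case $\I(\phi)>0$ is a real possibility at this stage and must be ruled out by a separate argument; this is the core of the paper's proof. There, from the Brezis--Lieb splitting $0=\I(\omega_k)=\I(\phi)+\I(\psi_k)+o(1)$, the assumption $\I(\phi)>0$ forces $\limsup_k\I(\psi_k)<0$; then Lemma \ref{LM: 2.1} projects $\psi_k$ onto $\mathcal N$ with factors $\mu_k$ satisfying $\limsup_k\mu_k\in(0,1)$, and a Fatou-type estimate produces the contradiction $d>\L(\mu_k\psi_k)\geq d$; the case $\I(\phi)<0$ is excluded analogously using $\mu_\phi<1$. Your subsequent steps (scaling $\phi$ back onto $\mathcal N$, forcing $t(\phi)=1$ since $d>0$, and the multiplier argument) are fine once $\I(\phi)=0$ is known, but as written your justification of $\I(\phi)\leq 0$ does not hold, and the appeal to concentration-compactness ``in the spirit of'' \cite{CR13} and \cite{RTY17} does not supply the missing exclusion of splitting.
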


In the sequel, we assume $1<p<q<\infty$.

Let us state and prove the following lemmas, which will be used in the proof of Theorem \ref{thm1}.

\begin{lem}\label{LM: 2.1}
For any function $u\in L^{p}_{Q/p}(\mathbb{G})\setminus\{0\}$, there is a unique $\mu_{u}>0$ such that $\mu_{u}u\in\mathcal{N}$. Moreover, we have $0<\mu_{u}<1$ when $\I(u)<0$.
\end{lem}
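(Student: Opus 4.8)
The plan is to study the real-valued function $g(\mu):=\I(\mu u)$ for $\mu>0$ and show it has a unique positive zero. Writing out the functional explicitly gives
\begin{equation*}
\I(\mu u)=\mu^{p}\int_{\G}\left(|\R^{\frac{Q}{\nu p}}u|^{p}+|u|^{p}\right)dx-\mu^{q}\int_{\G}|u|^{q}dx.
\end{equation*}
Setting $a:=\int_{\G}(|\R^{\frac{Q}{\nu p}}u|^{p}+|u|^{p})\,dx$ and $b:=\int_{\G}|u|^{q}\,dx$, both of which are strictly positive for $u\neq0$ (here $a>0$ because otherwise $u\equiv0$, and $b>0$ since $u\in L^{q}$ is nonzero by the embedding $L^{p}_{Q/p}(\G)\cap L^{p}(\G)\hookrightarrow L^{q}(\G)$ coming from Theorem \ref{crit_GN_thm}), the equation $\I(\mu u)=0$ reduces to $\mu^{p}a=\mu^{q}b$, i.e. $\mu^{q-p}=a/b$. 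Since $q>p$, this has the unique positive solution
\begin{equation*}
\mu_{u}=\left(\frac{a}{b}\right)^{\frac{1}{q-p}}=\left(\frac{\int_{\G}(|\R^{\frac{Q}{\nu p}}u|^{p}+|u|^{p})\,dx}{\int_{\G}|u|^{q}\,dx}\right)^{\frac{1}{q-p}},
\end{equation*}
which establishes existence and uniqueness of $\mu_{u}>0$.

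For the second assertion I would read off the sign of $\mu_{u}-1$ directly from the same formula. By definition $\I(u)=a-b$, so the hypothesis $\I(u)<0$ means precisely $a<b$, hence $a/b<1$. Because $q-p>0$, raising a number in $(0,1)$ to the positive power $1/(q-p)$ keeps it in $(0,1)$, so $\mu_{u}=(a/b)^{1/(q-p)}<1$. Combined with $\mu_{u}>0$ from the first part, this gives $0<\mu_{u}<1$, as claimed.

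The only genuine point requiring care is ensuring that both $a$ and $b$ are finite and strictly positive so that the algebra is legitimate. Finiteness of $a$ is immediate from $u\in L^{p}_{Q/p}(\G)$, while finiteness and positivity of $b=\|u\|_{L^{q}(\G)}^{q}$ for every $q\in(p,\infty)$ is exactly what the critical Gagliardo-Nirenberg inequality \eqref{crit_GN_ineq} supplies, guaranteeing $u\in L^{q}(\G)$ with $\|u\|_{L^{q}(\G)}>0$ whenever $u\not\equiv0$. Thus the functionals $\L$ and $\I$ are well defined on $L^{p}_{Q/p}(\G)$ (as stated in the definitions of \eqref{L}--\eqref{I}), and the reduction to the scalar equation $\mu^{q-p}=a/b$ is valid. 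I expect no serious obstacle here: once the monotone behaviour of $\mu\mapsto\mu^{q-p}$ is invoked, both uniqueness and the localization $0<\mu_{u}<1$ under $\I(u)<0$ follow immediately.
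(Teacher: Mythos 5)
Your proof is correct and follows essentially the same route as the paper: the explicit solution $\mu_{u}=(a/b)^{1/(q-p)}$ you derive is precisely the paper's formula $\mu_{u}=\|u\|_{L^{p}_{Q/p}(\mathbb{G})}^{\frac{p}{q-p}}\|u\|_{L^{q}(\mathbb{G})}^{-\frac{q}{q-p}}$ (recalling $\|u\|_{L^{p}_{Q/p}}^{p}=\int_{\G}(|\R^{\frac{Q}{\nu p}}u|^{p}+|u|^{p})dx$), with uniqueness and the bound $0<\mu_{u}<1$ under $\I(u)<0$ obtained in the same way. You merely supply details the paper leaves implicit, such as the finiteness and positivity of $a$ and $b$ via the critical Gagliardo--Nirenberg inequality.
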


\begin{proof}[Proof of Lemma \ref{LM: 2.1}]
For any $u\in L^{p}_{Q/p}(\mathbb{G})\setminus\{0\}$ and
\begin{equation}\label{mu}
\mu_{u}=\|u\|_{L^{p}_{Q/p}(\mathbb{G})}^{\frac{p}{q-p}} \|u\|^{-\frac{q}{q-p}}_{L^{q}(\mathbb{G})},
\end{equation}
we note that $\mu_{u}u\in\mathcal{N}$. It is easy to see that this $\mu_{u}$ is unique. Then, by \eqref{mu} one gets $0<\mu_{u}<1$ provided that  $\|u\|_{L^{p}_{Q/p}(\mathbb{G})}^{p}<\|u\|^{q}_{L^{q}(\mathbb{G})}$.
\end{proof}

\begin{lem}\label{LM: 2.2}
For all functions $u\in\mathcal{N}$, we have $\underset{u}{\rm inf}\|u\|_{L^{p}_{Q/p}(\mathbb{G})}>0$.
\end{lem}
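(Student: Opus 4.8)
The plan is to derive a uniform positive lower bound on $\|u\|_{L^p_{Q/p}(\G)}$ directly from the Nehari constraint together with the critical Gagliardo-Nirenberg inequality of Theorem~\ref{crit_GN_thm}; the whole point is that the nonlinearity $|u|^q$ has super-critical growth $q>p$, and this strict inequality is exactly what prevents the norm from degenerating to zero. First I would unwind the condition $\I(u)=0$: for $u\in\N$ it states
$$\int_{\G}\left(|\R^{\frac{Q}{\nu p}}u|^p+|u|^p\right)dx=\int_{\G}|u|^q\,dx,$$
and the left-hand side is comparable to $\|u\|_{L^p_{Q/p}(\G)}^p$ (indeed the inhomogeneous Sobolev norm is equivalent to $(\|\R^{\frac{Q}{\nu p}}u\|_{L^p(\G)}^p+\|u\|_{L^p(\G)}^p)^{1/p}$). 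Thus on $\N$ one has $\|u\|_{L^p_{Q/p}(\G)}^p\approx\|u\|_{L^q(\G)}^q$.

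Next I would bound the right-hand side from above by Theorem~\ref{crit_GN_thm}. Setting $A:=\|\R^{\frac{Q}{\nu p}}u\|_{L^p(\G)}$ and $B:=\|u\|_{L^p(\G)}$, the critical Gagliardo-Nirenberg inequality gives
$$\|u\|_{L^q(\G)}^q\leq\left(C_1q^{1-1/p}\right)^q A^{q-p}B^p.$$
Because $q>p$, I would estimate $A^{q-p}\leq(A^p+B^p)^{(q-p)/p}$ and $B^p\leq A^p+B^p$, so that $A^{q-p}B^p\leq(A^p+B^p)^{q/p}$. Writing $t:=A^p+B^p=\|u\|_{L^p_{Q/p}(\G)}^p$ and feeding this into the Nehari identity yields the self-improving inequality $t\leq C\,t^{q/p}$, with $C:=(C_1q^{1-1/p})^q$ a fixed constant depending only on $p$, $q$ and $Q$.

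Finally, since $u\neq0$ forces $t>0$, dividing by $t$ and using $q/p>1$ gives $1\leq C\,t^{(q-p)/p}$, hence
$$\|u\|_{L^p_{Q/p}(\G)}=t^{1/p}\geq C^{-1/(q-p)}>0.$$
As this lower bound is independent of $u\in\N$, it yields $\inf_{u\in\N}\|u\|_{L^p_{Q/p}(\G)}>0$, as claimed. I do not anticipate a genuine obstacle: the argument is short once the ingredients are assembled. The one point requiring care is the norm bookkeeping — making precise the comparison between $\int_{\G}(|\R^{\frac{Q}{\nu p}}u|^p+|u|^p)\,dx$ and $\|u\|_{L^p_{Q/p}(\G)}^p$, and confirming that the constant $C$ above is genuinely uniform over the Nehari set (for fixed $q$ it is), so that the resulting bound holds simultaneously for every element of $\N$.
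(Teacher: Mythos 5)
Your proof is correct and follows essentially the same route as the paper: the Nehari identity $\|u\|_{L^p_{Q/p}(\G)}^p=\|u\|_{L^q(\G)}^q$, the critical Gagliardo--Nirenberg inequality of Theorem \ref{crit_GN_thm}, domination of $\|\R^{\frac{Q}{\nu p}}u\|_{L^p(\G)}$ and $\|u\|_{L^p(\G)}$ by the full Sobolev norm, and then division by the (nonzero) norm to get the uniform lower bound $C^{-1/(q-p)}$. The only cosmetic difference is that you package the last estimate as $A^{q-p}B^p\leq(A^p+B^p)^{q/p}$ while the paper bounds each factor separately, which amounts to the same thing.
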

\begin{proof}[Proof of Lemma \ref{LM: 2.2}]
Using \eqref{crit_GN_ineq} we calculate
\begin{equation*}
\begin{split}
\|u\|_{L^{p}_{Q/p}(\mathbb{G})}^{p}=\|u\|^{q}_{L^{q}(\mathbb{G})}&\leq C q^{q-q/p} \|\mathcal{R}^{\frac{Q}{\nu p}} u\|^{q-p}_{L^{p}(\mathbb{G})} \|u\|^{p}_{L^{p}(\mathbb{G})} \\
&\leq C q^{q-q/p} \|u\|^{q-p}_{L^{p}_{Q/p}(\mathbb{G})} \|u\|^{p}_{L^{p}_{Q/p}(\mathbb{G})}\\
&\leq C \|u\|_{L^{p}_{Q/p}(\mathbb{G})}^{q}
\end{split}
\end{equation*}
for all $u\in\mathcal{N}$. From this we obtain $\|u\|_{L^{p}_{Q/p}(\mathbb{G})}^{q-p}\geq C^{-1}$, which implies $\|u\|_{L^{p}_{Q/p}(\mathbb{G})}\geq \kappa$ for any function $u\in\mathcal{N}$ after setting $\kappa=C^{-\frac{1}{q-p}}>0$.
\end{proof}
Let us show the following Rellich-Kondrachev type lemma. On the Heisenberg group a similar result was obtained by Garofalo and Lanconelli \cite{GL-1992}.

\begin{lem}\label{LM: 2.3}
Let $1<p< q<\infty$. Then, we have the compact embedding $L^{p}_{Q/p}(D)\hookrightarrow L^{q}(D)$ for any smooth bounded domain $D\subset\mathbb{G}$, where $$L^{p}_{Q/p}(D)=\{f\in L^{p}_{Q/p}(\G):\;\;{\rm supp}f\subset D\}.$$
\end{lem}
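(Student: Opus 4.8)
The plan is to deduce compactness from the Kolmogorov--Riesz--Fr\'echet criterion on the homogeneous group $\G$: a family in $L^{q}(\G)$ whose members are all supported in one fixed bounded set is totally bounded in $L^{q}$ as soon as it is bounded in $L^{q}$ and equicontinuous in the mean, i.e. $\sup_{f}\|R_{y}f-f\|_{L^{q}(\G)}\to 0$ as $y\to e$, where $R_{y}f(x):=f(xy)$. Let $\{f_{n}\}$ be bounded in $L^{p}_{Q/p}(D)$. Since each $f_{n}$ is supported in the fixed bounded domain $D$, there is no escape of mass to infinity, so tightness is automatic; and by the critical Gagliardo--Nirenberg inequality of Theorem \ref{crit_GN_thm} the sequence is bounded in $L^{q_{0}}(\G)$ for \emph{every} finite $q_{0}$. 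It therefore only remains to establish the uniform modulus of continuity.

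To reach equicontinuity I would interpolate between $L^{p}$ and a higher exponent. Fix any $q_{0}\in(q,\infty)$ and write $\tfrac1q=\tfrac{1-\theta}{p}+\tfrac{\theta}{q_{0}}$ with $\theta\in[0,1)$, so that
$$\|R_{y}f-f\|_{L^{q}(\G)}\leq\|R_{y}f-f\|_{L^{p}(\G)}^{1-\theta}\,\|R_{y}f-f\|_{L^{q_{0}}(\G)}^{\theta}.$$
The second factor is bounded uniformly by $2\|f\|_{L^{q_{0}}(\G)}\le C\|f\|_{L^{p}_{Q/p}(\G)}$ from the previous step, so everything reduces to a genuine $L^{p}$ translation estimate of the form
$$\|R_{y}f-f\|_{L^{p}(\G)}\leq C\,|y|^{\delta}\,\|f\|_{L^{p}_{Q/p}(\G)},\qquad |y|\leq1,$$
for some $\delta>0$; this forces $\sup_{n}\|R_{y}f_{n}-f_{n}\|_{L^{q}(\G)}\to0$ as $y\to e$.

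The $L^{p}$ modulus of continuity is the heart of the matter, and it is exactly here that the hypothesis $1<p\le Q/\gamma$, i.e. $Q/p\ge\gamma$, enters. Writing $y=\exp_{\G}(s_{1}X_{1})\cdots\exp_{\G}(s_{n}X_{n})$ in exponential coordinates of the second kind, with $|s_{j}|\lesssim|y|^{\nu_{j}}$ in the dilation weights of \eqref{dil_weight}, I would telescope $R_{y}f-f$ along the one-parameter subgroups $s\mapsto\exp_{\G}(sX_{j})$, use $\tfrac{d}{ds}f(x\exp_{\G}(sX_{j}))=(X_{j}f)(x\exp_{\G}(sX_{j}))$ together with the right-invariance of Haar measure, and obtain $\|R_{y}f-f\|_{L^{p}(\G)}\lesssim\sum_{j}|y|^{\nu_{j}}\|X_{j}f\|_{L^{p}(\G)}$. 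Since $X_{j}$ is homogeneous of degree $\nu_{j}$ and $\nu_{j}\le\nu_{n}=\gamma\le Q/p$ (with $\gamma$ as in \eqref{gamma}), the Sobolev mapping properties of $\R$ from \cite{FR16} give $\|X_{j}f\|_{L^{p}(\G)}\lesssim\|f\|_{L^{p}_{\nu_{j}}(\G)}\lesssim\|f\|_{L^{p}_{Q/p}(\G)}$; summing and keeping the smallest power yields the claim with $\delta=\nu_{1}$. The constraint $Q/p\ge\gamma$ is precisely what guarantees that \emph{all} the first-order fields $X_{j}$, including the one of top weight $\nu_{n}=\gamma$, still act boundedly from $L^{p}_{Q/p}(\G)$ into $L^{p}(\G)$.

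I expect this modulus-of-continuity step to be the main obstacle, because the criticality of the smoothness index $Q/p$ blocks the more direct routes. In particular one cannot argue from the Riesz representation $f=\R^{-Q/(\nu p)}g$ with $g\in L^{p}(\G)$ as in the proof of Theorem \ref{crit_GN_thm}, since at the critical exponent the kernel $|x|^{-Q/p'}$ fails to be locally $p'$-integrable, which is exactly why the subcritical detour through interpolation is needed; the Besov embeddings of \cite{CR17} offer an alternative derivation of the same translation estimate. Once the uniform modulus of continuity is in hand, the Kolmogorov--Riesz--Fr\'echet criterion extracts an $L^{q}(D)$-convergent subsequence of $\{f_{n}\}$, establishing the asserted compact embedding $L^{p}_{Q/p}(D)\hookrightarrow L^{q}(D)$.
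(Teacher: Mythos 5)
Your overall scheme --- Kolmogorov--Riesz--Fr\'echet compactness, with $L^{q_{0}}$-boundedness for every finite $q_{0}$ coming from Theorem \ref{crit_GN_thm}, tightness for free from the fixed support, and $L^{q}$ equicontinuity obtained by interpolating a first-order $L^{p}$ translation estimate against that boundedness --- is a legitimate alternative to the paper's argument. The paper instead exhibits the embedding as an operator-norm limit of the compact mollification operators $K_{\varepsilon}f=\phi_{\varepsilon}\ast f$, estimating $\|K_{\varepsilon}f-f\|_{L^{q}(D)}$ by exactly the two ingredients you use: the critical Gagliardo--Nirenberg inequality and a mean-value estimate $\|K_{\varepsilon}f-f\|_{L^{p}(D)}\lesssim\sum_{j}\varepsilon^{\nu_{j}}\|f\|_{L^{p}_{\gamma}(D)}$. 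So the analytic core is the same and only the abstract compactness mechanism differs (total-boundedness criterion versus closedness of the compact operators in norm); note also that your translations are group translations, so you would need the locally-compact-group version of the Kolmogorov--Riesz theorem, a minor point.

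The genuine problem sits in the step you yourself call the heart of the matter. You assert $\nu_{j}\leq\nu_{n}=\gamma$ for all $j$. By \eqref{gamma} this identification holds only in the \emph{non-stratified} case: when $\G$ is stratified, $\gamma=1$ while $\nu_{n}$ equals the step of the group, so $\nu_{n}\geq2$ for every non-abelian stratified group and your inequality is false. This matters because your telescoping runs over \emph{all} one-parameter subgroups $\exp_{\G}(sX_{j})$, $j=1,\dots,n$, and therefore needs $\|X_{j}f\|_{L^{p}(\G)}\lesssim\|f\|_{L^{p}_{Q/p}(\G)}$, i.e.\ $\nu_{j}\leq Q/p$, for \emph{every} $j$, whereas the hypothesis $p\leq Q/\gamma$ only gives $\gamma\leq Q/p$. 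Concretely, on the Heisenberg group $\mathbb{H}^{1}$ one has $Q=4$ and weights $(1,1,2)$; taking $p=3\leq Q/\gamma=4$ gives $Q/p=4/3<2=\nu_{3}$, so the central field $X_{3}$, homogeneous of degree $2$, does not map $L^{p}_{Q/p}(\G)$ into $L^{p}(\G)$, and your translation estimate is unproven precisely in a regime the lemma must cover. The repair --- and the very reason \eqref{gamma} distinguishes the two cases --- is to exploit the stratification when $\G$ is stratified: since the first stratum generates the Lie algebra, one telescopes along horizontal one-parameter subgroups only, i.e.\ one uses Folland's mean value theorem on stratified groups \cite{F75} rather than the general homogeneous-group version of \cite[Proposition 3.1.46]{FR16}, so that every vector field appearing has weight exactly $1=\gamma\leq Q/p$. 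In the non-stratified case your argument is fine as written, since there $\gamma=\nu_{n}$ indeed dominates all the weights. With this case distinction inserted, your proof goes through.
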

\begin{proof}[Proof of Lemma \ref{LM: 2.3}] Because of the density argument, it is enough to prove $\mathring{L}^{p}_{Q/p}(D)\hookrightarrow L^{q}(D)$, where $\mathring{L}^{p}_{Q/p}(D)$ denotes the closure of $C_{0}^{\infty}(D)$ with respect to the norm \eqref{norm} with $a=Q/p$.

Let $\phi \in C_{0}^{\infty}(\G)$, with $0\leq \phi \leq 1$, ${\rm supp}\;\phi\subset \overline{B(0,1)}$ and $\int_{\G}\phi(x)dx=1$. We define
\begin{equation}\label{K_epsilon}
K_{\varepsilon}f:=\phi_{\varepsilon}\ast f(x),
\end{equation}
where $f\in L^{1}_{{\rm loc}}(\G)$ and $\phi_{\varepsilon}(x):=\varepsilon^{-Q}\phi(\varepsilon^{-1}x)$ for every $\varepsilon>0$.

We will also use the following lemma, which is an analogue of \cite[Lemma 3.1]{GL-1992} for graded groups.
\begin{lem}\label{LM: auxil-LG} Let $D\subset\mathbb{G}$ be a bounded open set and $1<q<\infty$. Then, $Z\subset L^{q}(D)$ is relatively compact in $L^{q}(D)$, if and only if
\begin{enumerate}
    \item $Z$ is bounded;
    \item $\|K_{\varepsilon}f-f\|_{q}\rightarrow 0$ as $\varepsilon\rightarrow 0$, uniformly in $f\in Z$.
    \end{enumerate}
\end{lem}
\begin{proof}[Proof of Lemma \ref{LM: auxil-LG}] Let us briefly sketch the proof of the lemma. To show the necessity, we extend functions in $L^{q}(D)$ with zero outside $D$. We can take $f_{1},\ldots,f_{n}$ from $Z$ and $r>0$, so that the balls in $L^{q}(D)$ centred at $f_{k}$ with radius $r$ cover $Z$. For a given $f$, let us take $f_{k}$ such that $\|f_{k}-f\|_{q}<r$. Then we have
$$\|f-K_{\varepsilon}f\|_{q}\leq \|f-f_{k}\|_{q}+\|f_{k}-K_{\varepsilon}f_{k}\|_{q}+\|K_{\varepsilon}f_{k}-K_{\varepsilon}f\|_{q}.$$
Taking into account $\|K_{\varepsilon}f\|_{q}\leq \|f\|_{q}$ and $K_{\varepsilon}f\rightarrow f$ in $L^{q}(D)$ as $\varepsilon\rightarrow 0$ also letting $r\rightarrow 0$, we get (2).

We now show sufficiency. Let $f_{n}$ be a bounded sequence in $Z$. By the Banach-Alouglu theorem and the reflexivity of $L^{q}$, $1<q<\infty$, we know that there exists a subsequence, still denoted by $f_{n}$ weakly convergent in $L^{q}$, that is, there exists $f\in L^{q}$ such that
\begin{equation}\label{com_emb_ineq_0}
\int(f_{n}-f)h\rightarrow 0,\;\;\forall h\in L^{q^{\prime}}.
\end{equation}
Let us now show that it actually converges strongly. For this, we write
\begin{equation}\label{com_emb_ineq}
\|f_{n}-f\|_{q}\leq \|f_{n}-K_{\varepsilon}f_{n}\|_{q}+\|K_{\varepsilon}f_{n}-K_{\varepsilon}f\|_{q}+\|K_{\varepsilon}f-f\|_{q}.
\end{equation}
By the assumption (2), we note that the first and third summands vanish when $\varepsilon\rightarrow 0$. For the second summand, \eqref{com_emb_ineq_0} implies that for all $x$ and for all $\varepsilon$ we have $K_{\varepsilon}(f_{n}-f)(x)\rightarrow 0$ as $n\rightarrow \infty$. Since we also have
$$\||K_{\varepsilon}(f_{n}-f)|^{q}\|_{L^{1}}=\|K_{\varepsilon}(f_{n}-f)\|^{q}_{L^{q}}
\leq \|\phi_{\varepsilon}\|^{q}_{L^{1}}\|f_{n}-f\|^{q}_{L^{q}}<\infty,$$
by the Lebesgue dominated convergence theorem we observe that
$$\int|K_{\varepsilon}(f_{n}-f)(x)|^{q}dx\rightarrow 0$$
as $n\rightarrow \infty$.

Thus, from \eqref{com_emb_ineq} we can conclude that $Z$ is relatively compact in $L^{q}(D)$.
\end{proof}
We now come back to the proof of Lemma \ref{LM: 2.3}. Setting $f\equiv 0$ in $\G\backslash D$ for $f\in \mathring{L}^{p}_{Q/p}(D)$, we get a function in $L^{p}_{Q/p}(\G)$. Let us now use Lemma \ref{LM: auxil-LG}. Let $Z$ be a bounded set in $\mathring{L}^{p}_{Q/p}(D)$. Then, using $|B(x,r)|=r^{Q}|B(0,1)|$ for the Haar measure of any open quasi-ball (see e.g. \cite[Page 140]{FR16}) and the critical Gagliardo-Nirenberg inequality \eqref{crit_GN_ineq}, we note that $Z$ is bounded in $L^{q}(D)$.

To complete the proof, it remains to verify the second part of Lemma \ref{LM: auxil-LG}. For $f\in Z$ by denoting
$$
 \psi_{\varepsilon}:=K_{\varepsilon}f-f
$$ and using \eqref{crit_GN_ineq}, we obtain
\begin{equation}\label{RK1}
\|\psi_{\varepsilon}\|_{L^{q}(D)}\leq C \|\psi_{\varepsilon}\|_{\dot{L}^{p}_{Q/p}(D)}^{1-q/p}\|\psi_{\varepsilon}\|^{q/p}_{\dot{L}^{p}(D)}.
\end{equation}
Therefore, it is enough to show that
$$\|K_{\varepsilon}f-f\|_{\dot{L}^{p}_{Q/p}(D)} \rightarrow 0,$$
that is,
$$\|\R^{\frac{Q}{\nu p}}K_{\varepsilon}f-\R^{\frac{Q}{\nu p}}f\|_{L^{p}(D)} \rightarrow 0.$$
Indeed, it holds since by \cite[Part (i) of Lemma 3.1.58]{FR16} we have
$$\R^{\frac{Q}{\nu p}}K_{\varepsilon}f-\R^{\frac{Q}{\nu p}}f= \phi_{\varepsilon} \ast\R^{\frac{Q}{\nu p}}f
-\R^{\frac{Q}{\nu p}}f\rightarrow 0$$
as $\varepsilon\rightarrow 0$.

Therefore, by Lemma \ref{LM: auxil-LG} we can conclude that $Z$ is relatively compact in $L^{q}(D)$.
\end{proof}
We also note the following property of least energy solutions.
\begin{lem}\label{LM: 2.4}
If $v\in\N$ and $\L(v)=d$ then $v$ must be a least energy solution of the Schr\"{o}dinger type equation \eqref{nonlinear}.
\end{lem}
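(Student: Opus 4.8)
The plan is to read the hypothesis as saying that $v$ is a global minimizer of $\L$ on the Nehari manifold $\N=\{u\neq0:\I(u)=0\}$, since $v\in\N$ and $\L(v)=d=\inf_{\N}\L$, and then to extract the two defining properties of a least energy solution in the sense of Definition \ref{def}: that $v$ solves \eqref{nonlinear}, i.e. $\L'(v)=0$, and that $\L(v)$ is minimal among all nonzero critical points. The structural remark that drives everything is the identity $\I(u)=\langle\L'(u),u\rangle$, obtained by testing the Euler--Lagrange expression of $\L$ against $u$ itself.

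First I would record that $\Gamma\subset\N$: if $\upsilon\in\Gamma$, then $\L'(\upsilon)=0$ and $\upsilon\neq0$, whence $\I(\upsilon)=\langle\L'(\upsilon),\upsilon\rangle=0$, so $\upsilon\in\N$. Granting this, once we know $v\in\Gamma$ the chain $\L(v)=d=\inf_{\N}\L\le\L(\upsilon)$ holds for every $\upsilon\in\Gamma\subset\N$, which is exactly the statement that $v\in\mathcal{G}$ is a least energy solution. Thus the entire lemma reduces to upgrading the constrained minimizer $v$ to a genuine (free) critical point $\L'(v)=0$.

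To obtain $\L'(v)=0$ I would apply the Lagrange multiplier rule on the $C^{1}$ constraint $\I=0$. I first compute $\langle\I'(v),v\rangle$ directly: by Euler's homogeneity relation the degree-$p$ terms $\int_{\G}|\R^{\frac{Q}{\nu p}}u|^{p}$ and $\int_{\G}|u|^{p}$ contribute a factor $p$ and the degree-$q$ term $\int_{\G}|u|^{q}$ a factor $q$, so that $\langle\I'(v),v\rangle=p(\|\R^{\frac{Q}{\nu p}}v\|_{L^{p}(\G)}^{p}+\|v\|_{L^{p}(\G)}^{p})-q\|v\|_{L^{q}(\G)}^{q}$; using $\I(v)=0$ to substitute $\|v\|_{L^{q}(\G)}^{q}=\|\R^{\frac{Q}{\nu p}}v\|_{L^{p}(\G)}^{p}+\|v\|_{L^{p}(\G)}^{p}=\|v\|_{L^{p}_{Q/p}(\G)}^{p}$ collapses this to $\langle\I'(v),v\rangle=(p-q)\|v\|_{L^{p}_{Q/p}(\G)}^{p}$. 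Since $p<q$ and, by Lemma \ref{LM: 2.2}, $\|v\|_{L^{p}_{Q/p}(\G)}\ge\kappa>0$, this quantity is strictly negative, hence nonzero; in particular $\I'(v)\neq0$, so $v$ is a regular point of the constraint and there exists $\mu\in\mathbb{R}$ with $\L'(v)=\mu\,\I'(v)$. Testing this identity against $v$ gives, on the left, $\langle\L'(v),v\rangle=\I(v)=0$, and on the right $\mu\langle\I'(v),v\rangle$; since $\langle\I'(v),v\rangle\neq0$ we conclude $\mu=0$, so $\L'(v)=0$, i.e. $v\in\Gamma$, and the second paragraph finishes the proof.

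I expect the main technical obstacle to be the analytic underpinning of the multiplier step rather than the algebra: namely verifying that $\L$ and $\I$ are continuously Fréchet differentiable on $L^{p}_{Q/p}(\G)$ and that differentiation of the $L^{q}$-term under the integral is legitimate, so that the identity $\langle\L'(v),v\rangle=\I(v)$ and the Euler relations for $\langle\I'(v),v\rangle$ are rigorous. The integrability needed for this comes from the embedding $L^{p}_{Q/p}(\G)\hookrightarrow L^{q}(\G)$ furnished by the critical Gagliardo-Nirenberg inequality \eqref{crit_GN_ineq}, while the quantitative lower bound of Lemma \ref{LM: 2.2} is precisely what forces the multiplier to vanish; the remainder is routine bookkeeping.
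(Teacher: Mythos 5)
Your proof is correct and follows essentially the same route as the paper: the Lagrange multiplier rule on the constraint $\I=0$, the computation $\langle\I'(v),v\rangle=(p-q)\|v\|_{L^{p}_{Q/p}(\G)}^{p}<0$ together with $\langle\L'(v),v\rangle=\I(v)=0$, and the conclusion that the multiplier vanishes, hence $\L'(v)=0$. Your additions—verifying $\I'(v)\neq0$ so that $v$ is a regular point of the constraint, and making explicit the inclusion $\Gamma\subset\N$ needed to pass from $\L(v)=d$ to minimality over all critical points—are details the paper leaves implicit, not a different method.
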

\begin{proof}[Proof of Lemma \ref{LM: 2.4}]  By the Lagrange
multiplier rule there is $\theta\in \mathbb{R}$ such that for any $\psi\in L^{p}_{Q/p}(\mathbb{G})$ we have
\begin{equation}\label{Lag1}
\langle \L'(v), \psi \rangle_{\G}=\theta \langle \I'(v), \psi\rangle_{\G},
\end{equation}
due to the assumption on $v$, where $\langle \cdot, \cdot \rangle_{\G}$ is a dual product between $L^{p}_{Q/p}(\mathbb{G})$ and its dual space.

Taking into account $q>p$, one gets
\begin{equation}\label{Lag2}
\langle \I'(v), v\rangle_{\G} = p \| v \|_{L^{p}_{Q/p}(\mathbb{G})}^{p} - q \int_{\mathbb{G}} |v|^{q} d x=(p-q) \int_{\mathbb{G}}|v|^{q} d x <0.
\end{equation}
On the other hand, we have
\begin{equation}\label{Lag3}
\langle \L'(v), v\rangle_{\G}=\I(v)=0.
\end{equation}
Combining \eqref{Lag2} and \eqref{Lag3}, we obtain $\theta = 0$ from \eqref{Lag1}. It implies that $\L'(v) = 0$. By Definition \ref{def}, we obtain that $v$ is a least energy solution of the nonlinear equation \eqref{nonlinear}.
\end{proof}

Now we are ready to prove Theorem \ref{thm1}.

\begin{proof}[Proof of Theorem \ref{thm1}] We choose $(v_{k})_{k}\subset\N$ as a minimising sequence. By Ekeland variational principle we obtain a sequence $(u_{k})_{k}\subset\N$ satisfying $\L(u_{k})\to d$ and $\L'(u_{k})\to 0$.
Applying the Sobolev inequality and Lemma \ref{LM: 2.2} we see that there exist two positive
constants $A_1$ and $A_2$ with the properties
$$
A_1\leq\|u_{k}\|_{L^{p}_{Q/p}(\mathbb{G})}\leq A_2.
$$
From this and the equality
$$
\|u_{k}\|_{L^{p}_{Q/p}(\mathbb{G})}^{p}=\int_{\G}|u_{k}(x)|^{q}dx,
$$
we obtain the existence of a positive constant $A_3$ so that
\begin{equation}\label{EQ: (2.1)}
\limsup\limits_{k\to\infty}\int_{\G}|u_{k}(x)|^{q}dx\geq A_3 > 0.
\end{equation}
Applying Lemma \ref{LM: 2.3} and the concentration compactness argument of \cite[Lemma 3.1]{ST02}, we have that
 $u_{k}\to0$ in $L^{q}(\G)$ for all $1<p<q<\infty$ if the following
\begin{equation}
\lim\limits_{k\to\infty}\sup\limits_{\eta\in\G}\int\limits_{B(\eta, r)}|u_{k}(x)|^{q} d x = 0
\end{equation}
holds true for some $r > 0$, where $B(\eta, r)$ is a quasi-ball on $\G$ centred at $\eta$ with radius $r$. By \eqref{EQ: (2.1)} there is a constant $A_4>0$ and $r > 1$ such that
\begin{equation}\label{EQ: (2.2)}
\liminf\limits_{k\to\infty}\sup\limits_{\eta\in\G}\int\limits_{B(\eta, r)}|u_{k}(x)|^{q}dx\geq A_4 > 0.
\end{equation}
Assume that there are $\tilde{x}^{k}\in\G$ with
\begin{equation}\label{EQ: (2.3)}
\liminf\limits_{k\to\infty}\int\limits_{B(\tilde{x}^{k}, r)}|u_{k}(x)|^{q}dx\geq \frac{A_4}{2} > 0.
\end{equation}
Taking into account the bi-invariance of the Haar measure and the left invariance of the operator $\R$ one has for all $\tilde{x}^{k}\in\G$
$$
\L(u_{k}(\tilde{x}x))=\L(u_{k}(x))\;\;\text{and}\;\;\I(u_{k}(\tilde{x}x))=\I(u_{k}(x)).$$
Let us denote $\omega_{k}(x):=u_{k}(\tilde{x}x)$. Then we have $\L(\omega_{k})=\L(u_{k})$ and $\I(\omega_{k})=\I(u_{k})$. Moreover, it gives the bounded sequence $(\omega_{k})_{k}$ from $L^{p}_{Q/p}(\mathbb{G})$ with
\begin{equation}\label{EQ: (2.4)}
\liminf\limits_{k\to\infty}\int_{B(0, r)}|\omega_{k}(x)|^{q}dx\geq \frac{A_4}{2} > 0.
\end{equation}
There exists a subsequence, denoted by $\omega_{k}$ that weakly converges to $\phi$ in $L^{p}_{Q/p}(\mathbb{G})$. Then from Lemma \ref{LM: 2.3} we see that $\omega_{k}$ strongly converges to $\phi$ in $L^{q}_{loc}(\G)$. By this and \eqref{EQ: (2.4)}, we have $\phi\neq0$.

Now let us show that $\omega_{k}$ converges strongly to $\phi$ in $L^{p}_{Q/p}(\mathbb{G})$. First we show that $\I(\phi)=0$. We proceed by contradiction. Suppose that $\I(\phi)<0$. Lemma \ref{LM: 2.1} gives that there is a positive number $\mu_{\phi} < 1$ with $\mu_{\phi}\phi\in\N$ for $\I(\phi) < 0$. Since $\I(\omega_{k}) = 0$, applying the Fatou lemma we calculate
$$d+o(1)=\L(\omega_{k})=\left(\frac{1}{p}-\frac{1}{q}\right)\int_{\G} |\omega_{k}(x)|^{q}dx \geq \left(\frac{1}{p}-\frac{1}{q}\right)\int_{\G} |\phi(x)|^{q}dx + o(1), $$
that is,
\begin{equation}\label{EQ: (2.5)}
d+o(1)\geq\left(\frac{1}{p}-\frac{1}{q}\right)\mu_{\phi}^{-q}\int_{\G} |\mu_{\phi}\phi(x)|^{q}dx + o(1)
=\mu_{\phi}^{-q} \L(\mu_{\phi}\phi)  + o(1),
\end{equation}
which implies $d > \L(\mu_{\phi}\phi)$. Since $\mu_{\phi}\phi\in\N$, we arrive at a contradiction.

Suppose now that $\I(\phi)>0$. We need the following lemma:
\begin{lem}[{\cite[Lemma 3]{BL83}}]
\label{BrL_lem}
 Let $\ell:\mathbb{C}\rightarrow\mathbb{R}$ be convex. Then
$$|\ell(a+b)-\ell(a)|\leq \varepsilon[\ell(ma)-m\ell(a)]+|\ell(C_{\varepsilon}b)|+|\ell(-C_{\varepsilon}b)|$$
for any $a,b \in \mathbb{C}$, $0<\varepsilon<\frac{1}{m}<1$ and $\frac{1}{C_{\varepsilon}}=\varepsilon(m-1)$.
\end{lem}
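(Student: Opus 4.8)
The plan is to treat this as a purely finite-dimensional convexity inequality: nothing about the group $\G$, the operator $\R$, or the functionals enters, only the convexity of $\ell$ on $\mathbb{C}$ (viewed as $\mathbb{R}^2$) together with the elementary arithmetic relations among $\varepsilon$, $m$ and $C_\varepsilon$. First I would split the absolute value into the two one-sided estimates
$$\ell(a+b)-\ell(a)\le \varepsilon[\ell(ma)-m\ell(a)]+|\ell(C_\varepsilon b)| \quad\text{and}\quad \ell(a)-\ell(a+b)\le \varepsilon[\ell(ma)-m\ell(a)]+|\ell(-C_\varepsilon b)|,$$
and then combine them: the left-hand side of the lemma is the maximum of the two left-hand sides, hence bounded by the maximum of the two right-hand sides, which is in turn dominated by the full right-hand side of the claim.

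For the first estimate I would exhibit $a+b$ as an explicit convex combination of the three points $ma$, $a$ and $C_\varepsilon b$, namely $a+b=\varepsilon(ma)+(1-\varepsilon m)a+\varepsilon(m-1)(C_\varepsilon b)$. One checks that the weights are nonnegative precisely because $0<\varepsilon<1/m$ and $m>1$, that they sum to $1$, and that the coefficient of $b$ collapses to $\varepsilon(m-1)C_\varepsilon=1$ exactly by the choice $1/C_\varepsilon=\varepsilon(m-1)$. Applying Jensen's inequality for the convex $\ell$ and subtracting $\ell(a)$ gives $\ell(a+b)-\ell(a)\le \varepsilon[\ell(ma)-m\ell(a)]+\varepsilon(m-1)\ell(C_\varepsilon b)$; since $0<\varepsilon(m-1)<1$, one has $\varepsilon(m-1)\ell(C_\varepsilon b)\le|\ell(C_\varepsilon b)|$, which finishes this direction.

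The reverse estimate is the main obstacle, because the naive symmetric substitution $b\mapsto -b$ fails: it bounds $\ell(a)-\ell(a+b)$ by $\varepsilon[\ell(m(a+b))-m\ell(a+b)]+|\ell(-C_\varepsilon b)|$, producing the wrong point $m(a+b)$ rather than $ma$. Instead I would realize $a$ itself as a convex combination of $ma$, $a+b$ and $-C_\varepsilon b$; forcing the $b$-coefficients to cancel and the weights to sum to $1$ pins the weight on $ma$ to $\varepsilon/(1+m\varepsilon)$ (not $\varepsilon$), with weights $1/(1+m\varepsilon)$ on $a+b$ and $\varepsilon(m-1)/(1+m\varepsilon)$ on $-C_\varepsilon b$. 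After clearing the denominator, Jensen's inequality reads $(1+m\varepsilon)\ell(a)\le \varepsilon\ell(ma)+\ell(a+b)+\varepsilon(m-1)\ell(-C_\varepsilon b)$, and the slightly surprising point is that isolating $\ell(a)-\ell(a+b)$ makes the factor $1+m\varepsilon$ cancel cleanly, leaving $\ell(a)-\ell(a+b)\le \varepsilon[\ell(ma)-m\ell(a)]+\varepsilon(m-1)\ell(-C_\varepsilon b)$. Bounding the last term by $|\ell(-C_\varepsilon b)|$ exactly as before and combining with the first estimate yields the claim. The only things requiring care are the nonnegativity of all the weights, guaranteed by $0<\varepsilon<1/m$ and $m>1$, and the identity $\varepsilon(m-1)C_\varepsilon=1$, both of which are immediate from the hypotheses.
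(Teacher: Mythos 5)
Your proposal is correct. Note first that the paper itself gives no proof of this statement: it is imported verbatim as \cite[Lemma 3]{BL83} (Brezis--Lieb) and used as a black box in the proof of Theorem \ref{thm1}, so there is no in-paper argument to compare against; what you have written is a self-contained reconstruction. Checking your steps: the first decomposition $a+b=\varepsilon(ma)+(1-\varepsilon m)a+\varepsilon(m-1)(C_\varepsilon b)$ has nonnegative weights summing to $1$ precisely under $0<\varepsilon<1/m<1$, and the $b$-coefficient equals $\varepsilon(m-1)C_\varepsilon=1$ by the definition of $C_\varepsilon$, so Jensen gives $\ell(a+b)-\ell(a)\le\varepsilon[\ell(ma)-m\ell(a)]+\varepsilon(m-1)\ell(C_\varepsilon b)$. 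Your second decomposition is the genuinely non-obvious step, and it is right: solving for weights on $ma$, $a+b$, $-C_\varepsilon b$ that reproduce $a$ forces exactly $\bigl(\tfrac{\varepsilon}{1+m\varepsilon},\tfrac{1}{1+m\varepsilon},\tfrac{\varepsilon(m-1)}{1+m\varepsilon}\bigr)$, and after multiplying the Jensen inequality by $1+m\varepsilon$ the extra $m\varepsilon\,\ell(a)$ term is precisely what converts $\varepsilon\ell(ma)$ into $\varepsilon[\ell(ma)-m\ell(a)]$, so the factor cancels as you claim. The final passages $\varepsilon(m-1)\ell(\pm C_\varepsilon b)\le|\ell(\pm C_\varepsilon b)|$ (valid since $0<\varepsilon(m-1)<1-1/m<1$, for either sign of $\ell$) and the bound of the maximum of the two one-sided estimates by the full right-hand side (valid since each omitted term $|\ell(\pm C_\varepsilon b)|$ is nonnegative, regardless of the sign of $\ell(ma)-m\ell(a)$) are both legitimate. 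Your observation that the naive substitution $b\mapsto-b$ fails, producing $\ell(m(a+b))$ instead of $\ell(ma)$, is also accurate and explains why the second, asymmetric decomposition is needed; a minor bonus of your argument is that it requires neither continuity nor a normalization such as $\ell(0)=0$, only convexity.
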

As in \cite[Proof of Theorem 1.2]{CR13}, using this lemma for $\psi_{k}=\omega_{k}-\phi$ we have
$$
0=\I(\omega_{k})=\I(\phi)+\I(\psi_{k})+o(1),
$$
where $\I(\phi) > 0$. Then, one has
\begin{equation}\label{EQ: (2.6)}
\limsup\limits_{k\to\infty} \I(\psi_{k})< 0.
\end{equation}
Applying Lemma \ref{LM: 2.1}, there exists a sequence $\mu_{k}:=\mu_{\psi_{k}}$ with $\mu_{k} \psi_{k}\in \mathcal{N}$ and $\limsup\limits_{k\to\infty}\mu_{k}\in(0, 1)$. Indeed, assume that $\limsup\limits_{k\to\infty}\mu_{k}=1$. Then we have a subsequence $(\mu_{k_{j}})_{j\in\mathbb{N}}$ with the property $\lim\limits_{j\to\infty}\mu_{k_{j}}=1$. Since $\mu_{k_{j}}\psi_{k_{j}}\in \mathcal{N}$ we get that $\I(\psi_{k_{j}})=\I(\mu_{k_{j}}\psi_{k_{j}})+o(1)=o(1)$, which is a contradiction because of \eqref{EQ: (2.6)}. So, we have that $\limsup\limits_{k\to\infty}\mu_{k}\in(0,1)$. A direct calculation gives that
$$
d+o(1)=\L(\omega_{k})=\left(\frac{1}{p}-\frac{1}{q}\right)\int_{\mathbb{G}}|\omega_{k}(x)|^{q}dx
\geq\left(\frac{1}{p}-\frac{1}{q}\right)\int_{\mathbb{G}}|\psi_{k}(x)|^{q}dx,$$
that is,
\begin{equation}\label{EQ: (2.7)}
d+o(1)\geq\left(\frac{1}{p}-\frac{1}{q}\right)\mu_{k}^{-q}\int_{\mathbb{G}}|\mu_{k}\psi_{k}(x)|^{q}dx+o(1)
=\mu_{k}^{-q}\L(\mu_{k}\psi_{k})+o(1).
\end{equation}
Therefore, the fact $\limsup\limits_{k\to\infty}\mu_{k}\in(0,1)$ gives that $d>\L(\mu_{k}\psi_{k})$. It contradicts $\mu_{k}\phi_{k}\in\N$.

Thus, we must have $\I(\phi)=0$. Now we prove that $\psi_{k}=\omega_{k}-\phi\to0$ in the space $L^{p}_{Q/p}(\mathbb{G})$. Indeed, suppose that $\|\psi_{k}\|_{L^{p}_{Q/p}(\mathbb{G})}$ does not vanish as $k\to\infty$. Then we consider the following cases. The first one is when $\int_{\G}|\psi_{k}(x)|^{q}dx$ does not converge to $0$ as $k\to\infty$. Then the following identity
$$
0=\I(\omega_{k})=\I(\phi)+\I(\psi_{k})+o(1)=\I(\psi_{k})+o(1)
$$
with the Brezis-Lieb lemma imply the contradiction
$$d+o(1)=\L(\omega_{k})=\L(\phi)+\L(\psi_{k})+o(1)
\geq d+d+o(1).$$
In the case, when $\int_{\G}|\psi_{k}(x)|^{q}dx$ converges to $0$ as $k\to\infty$, again we obtain a contradiction:
\begin{equation*}
\begin{split}
d+o(1)&=\L(\omega_{k})=\L(\phi)+\frac{1}{p}\|\psi_{k}\|_{L^{p}_{Q/p}(\mathbb{G})}^{p}+o(1)\\
&\geq d+\frac{1}{p}\|\psi_{k}\|_{L^{p}_{Q/p}(\mathbb{G})}^{p}+o(1)>d,
\end{split}
\end{equation*}
when $\|\psi_{k}\|_{L^{p}_{Q/p}(\mathbb{G})}\nrightarrow0$ as $k\to\infty$. Thus, we conclude that $\omega_{k}$ converges strongly to $\phi$ in $L^{p}_{Q/p}(\mathbb{G})$ and $d=\L(\phi)$, where $\phi$ is a least energy solution of \eqref{nonlinear} by Lemma \ref{LM: 2.4}.
\end{proof}

\begin{thm}\label{sharp}
Let $1<p<q<\infty$. Let $\phi$ be a least energy solution of \eqref{nonlinear} and let $C_{GN, \R}$ be the smallest positive constant $C_{1}$ in \eqref{crit_GN_ineq}. Then we have
$$C_{GN, \R}=q^{-q+q/p}\frac{q}{p}\left(\frac{q-p}{p}\right)^{\frac{p-q}{p}}
\|\phi\|_{L^{p}(\mathbb{G})}^{p-q}$$
\begin{equation}\label{sharp1} =
q^{-q+q/p}\frac{q}{p}\left(\frac{q-p}{p}\right)^{\frac{p-q}{p}}
\left(\frac{p^{2}}{q-p}d\right)^{\frac{p-q}{p}},
\end{equation}
where $d$ are defined in \eqref{d}.
\end{thm}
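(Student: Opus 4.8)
The plan is to realise $C_{GN,\R}$ (more precisely its $q$-th power) as the reciprocal of the minimum of the Weinstein-type quotient
$$W(f):=\frac{\|\R^{\frac{Q}{\nu p}}f\|_{L^{p}(\G)}^{q-p}\,\|f\|_{L^{p}(\G)}^{p}}{\|f\|_{L^{q}(\G)}^{q}},\qquad f\in L^{p}_{Q/p}(\G)\setminus\{0\}.$$
Raising \eqref{crit_GN_ineq} to the power $q$ and taking the supremum over $f$ shows that the smallest admissible $C_{1}$ satisfies $C_{GN,\R}^{q}=q^{-q+q/p}\,(\inf_{f}W(f))^{-1}$, so the entire problem reduces to evaluating $\inf_{f}W(f)$ and proving it is attained. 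First I would record that $W$ is invariant both under the amplitude scaling $f\mapsto\lambda f$ and, crucially in this critical regime, under the dilation $f\mapsto f\circ D_{\tau}$: since $\R^{\frac{Q}{\nu p}}$ is homogeneous of degree $Q/p$ one has $\|\R^{\frac{Q}{\nu p}}(f\circ D_{\tau})\|_{L^{p}(\G)}=\|\R^{\frac{Q}{\nu p}}f\|_{L^{p}(\G)}$, while $\|f\circ D_{\tau}\|_{L^{p}(\G)}^{p}$ and $\|f\circ D_{\tau}\|_{L^{q}(\G)}^{q}$ both pick up the factor $\tau^{-Q}$, which cancels.

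Next I would identify the extremisers of $W$ with the least energy solutions of \eqref{nonlinear}. Computing the Euler--Lagrange equation of $W$ at a minimiser $u$ gives, for all test functions,
$$(q-p)\frac{\|u\|_{L^{p}(\G)}^{p}}{\|\R^{\frac{Q}{\nu p}}u\|_{L^{p}(\G)}^{p}}\,\R^{\frac{Q}{\nu p}}\p{|\R^{\frac{Q}{\nu p}}u|^{p-2}\R^{\frac{Q}{\nu p}}u}+p|u|^{p-2}u-\frac{q\|u\|_{L^{p}(\G)}^{p}}{\|u\|_{L^{q}(\G)}^{q}}|u|^{q-2}u=0,$$
which, after the two-parameter rescaling $u\mapsto\lambda\,u\circ D_{\tau}$ (the two free parameters $\lambda,\tau$ matching the two independent coefficient ratios to $1$), becomes exactly \eqref{nonlinear}; conversely every solution of \eqref{nonlinear} is a rescaled critical point of $W$. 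I would then invoke Theorem \ref{thm1} to guarantee existence of a least energy solution $\phi$, and note that on any solution $v$ of \eqref{nonlinear} both $W(v)$ and $\L(v)$ are increasing functions of $\|v\|_{L^{p}(\G)}^{p}$; hence minimising $\L$ over solutions (i.e. selecting the level $d$) also minimises $W$, giving $\inf_{f}W(f)=W(\phi)$.

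It then remains to evaluate $W(\phi)$ in closed form. Writing $A=\|\R^{\frac{Q}{\nu p}}\phi\|_{L^{p}(\G)}^{p}$, $B=\|\phi\|_{L^{p}(\G)}^{p}$ and $D=\|\phi\|_{L^{q}(\G)}^{q}$, the Nehari condition $\I(\phi)=0$ gives $A+B=D$, whence $d=\L(\phi)=\p{\tfrac1p-\tfrac1q}D$ and $D=\frac{pq}{q-p}d$. To split $A$ and $B$ I would use a Pohozaev-type identity: since $\L'(\phi)=0$, differentiating $\tau\mapsto\L(\phi\circ D_{\tau})=\frac1p A+\tau^{-Q}\p{\frac1p B-\frac1q D}$ at $\tau=1$ forces $\frac1p B=\frac1q D$, i.e. $B=\frac pq D=\frac{p^{2}}{q-p}d$ and $A=\frac{q-p}{p}B$. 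Substituting into $W(\phi)=A^{(q-p)/p}B/D=\frac pq\p{\frac{q-p}{p}}^{(q-p)/p}\|\phi\|_{L^{p}(\G)}^{q-p}$ and inverting via $C_{GN,\R}^{q}=q^{-q+q/p}(\inf_{f}W)^{-1}$ produces both expressions in \eqref{sharp1}.

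The main obstacle is the identification $\inf_{f}W=W(\phi)$: one must establish simultaneously that the infimum is attained and that the attaining function is a least energy solution rather than merely some critical point. The difficulty is exactly the two-parameter scale invariance of $W$, which destroys compactness of minimising sequences; this is the same degeneracy that is handled in the proof of Theorem \ref{thm1} by the concentration compactness argument, and I would lean on that result instead of redoing it. A secondary technical point is to justify that the dilation generator $\frac{d}{d\tau}(\phi\circ D_{\tau})|_{\tau=1}$ is an admissible direction for $\L'(\phi)$, which rests on the hypoelliptic regularity of the solution $\phi$. By contrast, the dilation invariance of the homogeneous term is actually \emph{helpful} at the Pohozaev step: it makes $\frac{d}{d\tau}\L(\phi\circ D_{\tau})$ depend only on $B$ and $D$, delivering the clean relation $B=\frac pq D$ directly.
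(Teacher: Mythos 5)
Your reduction of the theorem to the quotient $W$, the identities $\|\R^{\frac{Q}{\nu p}}\phi\|_{L^{p}(\G)}^{p}=\frac{q-p}{p}\|\phi\|_{L^{p}(\G)}^{p}$, $\|\phi\|_{L^{q}(\G)}^{q}=\frac{q}{p}\|\phi\|_{L^{p}(\G)}^{p}$, $\|\phi\|_{L^{p}(\G)}^{p}=\frac{p^{2}}{q-p}d$, and your evaluation of $W(\phi)$ reproduce exactly the paper's Lemma \ref{sharp_lem} and the upper-bound half of its proof of Theorem \ref{sharp} (the paper's functional $J$ in \eqref{J} is just $q^{q-q/p}W$). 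The genuine gap is in the lower bound $\inf_{f}W(f)\geq W(\phi)$. Your chain of reasoning --- minimisers of $W$ are, after rescaling, solutions of \eqref{nonlinear}; among solutions the least energy one minimises $W$ --- yields $\inf_{f}W=W(\phi)$ \emph{only if} the infimum is attained, and attainment is precisely what you have not established. Theorem \ref{thm1} cannot simply be ``leaned on'' here: it proves attainment for the constrained problem $d=\inf\{\L(u):u\in\N\}$, which is a different variational problem, and transporting attainment from the Nehari problem to the unconstrained, doubly scale-invariant quotient $W$ is the entire content of the missing step, not a corollary of the cited theorem. Without it, your argument gives only the trivial direction $\inf_{f}W\leq W(\phi)$.

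The paper closes this step with a direct comparison valid for \emph{every} $u\neq0$, requiring no compactness: Lemma \ref{sharp_lemma} (proved from the Nehari projection of Lemma \ref{LM: 2.1} together with the $d$-minimality of $\phi$) shows that $\phi$ minimises the full Sobolev norm $\|\cdot\|_{L^{p}_{Q/p}(\G)}^{p}$ among functions with the same $L^{q}$-mass; then, for arbitrary $u$, the two-parameter rescaling $\omega=\lambda u(\mu\,\cdot)$ is chosen to match \emph{both} $\|\omega\|_{L^{p}(\G)}=\|\phi\|_{L^{p}(\G)}$ and $\|\omega\|_{L^{q}(\G)}=\|\phi\|_{L^{q}(\G)}$, whence $\|\R^{\frac{Q}{\nu p}}\omega\|_{L^{p}(\G)}\geq\|\R^{\frac{Q}{\nu p}}\phi\|_{L^{p}(\G)}$ and, unwinding the scaling, $J(u)\geq J(\phi)$. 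You actually have all the pieces for such a direct argument in your own scaling computations: given $f\neq0$, choose $\mu^{Q}=\frac{q-p}{p}\|f\|_{L^{p}(\G)}^{p}/\|\R^{\frac{Q}{\nu p}}f\|_{L^{p}(\G)}^{p}$ and then $\lambda$ so that $g=\lambda f(\mu\,\cdot)\in\N$; by construction $\|\R^{\frac{Q}{\nu p}}g\|_{L^{p}(\G)}^{p}=\frac{q-p}{p}\|g\|_{L^{p}(\G)}^{p}$, so $W(f)=W(g)=\frac{p}{q}\left(\frac{q-p}{p}\right)^{\frac{q-p}{p}}\|g\|_{L^{p}(\G)}^{q-p}$, while $d\leq\L(g)=\frac{q-p}{p^{2}}\|g\|_{L^{p}(\G)}^{p}$ and \eqref{sharp_d} give $\|g\|_{L^{p}(\G)}^{p}\geq\|\phi\|_{L^{p}(\G)}^{p}$; hence $W(f)\geq W(\phi)$ with no attainment needed. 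Finally, a bookkeeping discrepancy you should resolve: with $C_{GN,\R}$ taken literally as the smallest $C_{1}$ in \eqref{crit_GN_ineq}, your (correct) relation $C_{GN,\R}^{q}=q^{-q+q/p}(\inf_{f}W)^{-1}$ produces \eqref{sharp1} with $C_{GN,\R}^{q}$ on the left-hand side; the paper instead works with the convention $C_{GN,\R}^{-1}=\inf J$, i.e.\ the best constant in the $q$-th power form of \eqref{crit_GN_ineq}, and only under that convention does one land on \eqref{sharp1} exactly as written.
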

We will use the following lemmas to prove Theorem \ref{sharp}:
\begin{lem}\label{sharp_lem}
Let $\phi$ be a least energy solution of \eqref{nonlinear}. Then we have
\begin{equation}\label{sharp_lem1}\int_{\mathbb{G}}|\mathcal{R}^{\frac{Q}{\nu p}}\phi(x)|^{p}dx=
\frac{q-p}{p}\int_{\mathbb{G}}|\phi(x)|^{p}dx
\end{equation}
and
\begin{equation}\label{sharp_lem2}\int_{\mathbb{G}}|\phi(x)|^{q}dx=\frac{q}{p}
\int_{\mathbb{G}}|\phi(x)|^{p}dx.\end{equation}
Moreover, we have
\begin{equation}\label{sharp_d}
\int_{\mathbb{G}}|\phi(x)|^{p}dx=\frac{p^{2}}{q-p}d.
\end{equation}
\end{lem}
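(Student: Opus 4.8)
The plan is to extract the three identities from just two facts about a least energy solution $\phi$: that it lies on the Nehari manifold $\mathcal N$, and that it minimises $\L$ over $\mathcal N$. Throughout I would abbreviate
\[
P:=\int_{\G}|\R^{\frac{Q}{\nu p}}\phi(x)|^{p}dx,\qquad R:=\int_{\G}|\phi(x)|^{p}dx,\qquad S:=\int_{\G}|\phi(x)|^{q}dx,
\]
so that \eqref{sharp_lem1}, \eqref{sharp_lem2} and \eqref{sharp_d} read $P=\frac{q-p}{p}R$, $S=\frac{q}{p}R$ and $R=\frac{p^{2}}{q-p}d$. Since $\phi$ is a least energy solution, $\L'(\phi)=0$, and testing this against $\phi$ (i.e. pairing $\langle\L'(\phi),\phi\rangle=\I(\phi)$, exactly as in \eqref{Lag3}) gives $\I(\phi)=0$, that is $\phi\in\mathcal N$ and
\[
P+R=S.
\]
This is the first relation; the whole content of the lemma is to produce one further independent relation among $P,R,S$.

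The key step is a Pohozaev-type identity obtained by dilating $\phi$. For $\lambda>0$ set $\phi_{\lambda}:=\phi\circ D_{\lambda}$. Because $\R$ is homogeneous of degree $\nu$, the operator $\R^{\frac{Q}{\nu p}}$ is homogeneous of degree $Q/p$, so $\R^{\frac{Q}{\nu p}}(\phi\circ D_{\lambda})=\lambda^{Q/p}(\R^{\frac{Q}{\nu p}}\phi)\circ D_{\lambda}$; together with the Jacobian factor $\lambda^{-Q}$ of the Haar measure under $D_{\lambda}$ this gives the scalings $\int_{\G}|\R^{\frac{Q}{\nu p}}\phi_{\lambda}|^{p}=P$, $\int_{\G}|\phi_{\lambda}|^{p}=\lambda^{-Q}R$ and $\int_{\G}|\phi_{\lambda}|^{q}=\lambda^{-Q}S$. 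The crucial feature — and precisely where the criticality $a=Q/p$ enters — is that the leading (gradient) term is \emph{invariant} under the dilation, while the two lower order terms pick up the factor $\lambda^{-Q}$. By Lemma \ref{LM: 2.1} there is a unique $\mu_{\phi_{\lambda}}>0$ with $\mu_{\phi_{\lambda}}\phi_{\lambda}\in\mathcal N$; inserting the explicit formula \eqref{mu} and using the elementary identity $\L(u)=(\frac{1}{p}-\frac{1}{q})\int_{\G}|u|^{q}$ valid for $u\in\mathcal N$, the energy of the Nehari projection becomes the explicit elementary function
\[
g(\lambda):=\L(\mu_{\phi_{\lambda}}\phi_{\lambda})=\left(\frac{1}{p}-\frac{1}{q}\right)S^{-\frac{p}{q-p}}\,\lambda^{-Q}\left(\lambda^{Q}P+R\right)^{\frac{q}{q-p}}.
\]

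Now $g(\lambda)\geq d$ for every $\lambda$ since $\mu_{\phi_{\lambda}}\phi_{\lambda}\in\mathcal N$, whereas $g(1)=\L(\phi)=d$ because $\phi\in\mathcal N$ forces $\mu_{\phi_{1}}=1$; hence $g$ attains an interior minimum at $\lambda=1$ and $g'(1)=0$. Differentiating the explicit formula and cancelling the positive prefactor yields $\frac{q}{q-p}P=P+R$, equivalently $P=\frac{q-p}{p}R$, which is \eqref{sharp_lem1}. Substituting into $P+R=S$ gives $S=\frac{q-p}{p}R+R=\frac{q}{p}R$, which is \eqref{sharp_lem2}. Finally, $d=\L(\phi)=(\frac{1}{p}-\frac{1}{q})S=(\frac{1}{p}-\frac{1}{q})\frac{q}{p}R=\frac{q-p}{p^{2}}R$ rearranges to $R=\frac{p^{2}}{q-p}d$, which is \eqref{sharp_d}.

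The step I expect to be the main obstacle is the rigorous justification of the dilation manipulations: one must confirm that $\phi_{\lambda}\in L^{p}_{Q/p}(\G)$, that the claimed homogeneity of $\R^{\frac{Q}{\nu p}}$ under $D_{\lambda}$ holds at the level of the fractional power defined by functional calculus, and that the scalings of all three integrals are as stated. Working with $g$ — an explicit function of $\lambda$ whose minimality at $\lambda=1$ comes from the minimality of $\phi$ over $\mathcal N$ established in Theorem \ref{thm1} — is deliberate, since it lets me apply the first-order condition $g'(1)=0$ without differentiating $\L(\phi_{\lambda})$ directly and thus without imposing any extra regularity on $\phi$ (the naive Pohozaev computation would require $\partial_{\lambda}\phi_{\lambda}|_{\lambda=1}\in L^{p}_{Q/p}(\G)$, which I would rather avoid).
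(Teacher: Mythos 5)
Your proposal is correct: the scalings of the three integrals, the formula for $g(\lambda)$, and the conclusion $\tfrac{q}{q-p}P=P+R$ from $g'(1)=0$ all check out, and the remaining algebra is the same as in the paper. The core idea also coincides with the paper's: both proofs exploit the dilation family and the fact that at the critical exponent the term $\int_{\G}|\R^{\frac{Q}{\nu p}}\cdot|^{p}dx$ is invariant under $D_{\lambda}$ while the $L^{p}$ and $L^{q}$ terms pick up $\lambda^{-Q}$, and both extract the missing relation from a first-order condition at $\lambda=1$. Where you genuinely differ is in how that first-order condition is justified. The paper works with the normalised family $\widetilde{\phi}_{\lambda}(x)=\lambda^{Q/p}\phi(\lambda x)$ and asserts $0=\partial_{\lambda}\L(\widetilde{\phi}_{\lambda})|_{\lambda=1}$ directly from the criticality $\L'(\phi)=0$, i.e. it differentiates the energy along the dilation curve through the critical point (which implicitly needs the chain rule, hence differentiability of $\lambda\mapsto\widetilde{\phi}_{\lambda}$ in $L^{p}_{Q/p}(\G)$ — exactly the issue you flag). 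You instead project the unnormalised family $\phi_{\lambda}$ back onto the Nehari manifold via Lemma \ref{LM: 2.1} and use $\L(\phi)=d=\inf_{\N}\L$, so that $g(\lambda)\geq g(1)=d$ turns the first-order condition into an interior-minimum statement for an explicit scalar function. The trade-off: the paper's route establishes \eqref{sharp_lem1}--\eqref{sharp_lem2} for \emph{any} nontrivial solution, using only the Euler--Lagrange equation, and invokes $\L(\phi)=d$ only for \eqref{sharp_d}; your route needs the full least-energy property $\L(\phi)=d$ (available from Theorem \ref{thm1} and Definition \ref{def}) throughout, but in exchange it sidesteps the regularity of the dilation curve entirely. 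The homogeneity $\R^{\frac{Q}{\nu p}}(f\circ D_{\lambda})=\lambda^{Q/p}(\R^{\frac{Q}{\nu p}}f)\circ D_{\lambda}$ that you list as the main technical obstacle is used by the paper's computation as well, so it is a shared prerequisite rather than an extra cost of your approach.
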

\begin{proof}[Proof of Lemma \ref{sharp_lem}] Since $\phi$ is a least energy solution of \eqref{nonlinear}, we have
\begin{equation}\label{sharp_lem3}
\int_{\mathbb{G}}|\mathcal{R}^{\frac{Q}{\nu p}}\phi(x)|^{p}dx+\int_{\mathbb{G}}|\phi(x)|^{p}dx
=\int_{\mathbb{G}}|\phi(x)|^{q}dx.
\end{equation}
On the other hand, a direct calculation gives for $\lambda>0$ and $\widetilde{\phi}_{\lambda}(x):=\lambda^{\frac{Q}{p}}\phi(\lambda x)$ that
$$\L(\widetilde{\phi}_{\lambda}(x))=\frac{\lambda^{Q}}{p}\int_{\G}|\R^{\frac{Q}{\nu p}}(\phi(\lambda x))|^{p}dx
+\frac{\lambda^{Q}}{p}\int_{\G}|\phi(\lambda x)|^{p}dx
-\frac{\lambda^{\frac{Qq}{p}}}{q}\int_{\G}|\phi(\lambda x)|^{q}dx$$
$$=\frac{\lambda^{Q}}{p}\int_{\G}|\R^{\frac{Q}{\nu p}}\phi(x)|^{p}dx
+\frac{1}{p}\int_{\G}|\phi(x)|^{p}dx
-\frac{\lambda^{\frac{Qq}{p}-Q}}{q}\int_{\G}|\phi(x)|^{q}dx,$$
which implies that
\begin{equation}\label{sharp_lem4} 0=\frac{\partial}{\partial \lambda}\L(\widetilde{\phi}_{\lambda})|_{\lambda=1}=\frac{Q}{p}\int_{\mathbb{G}}|\mathcal{R}^{\frac{Q}{\nu p}}\phi(x)|^{p}dx
-\frac{Q(q-p)}{pq}\int_{\mathbb{G}}|\phi(x)|^{q}dx.
\end{equation}
Thus, from \eqref{sharp_lem3} and \eqref{sharp_lem4} we obtain \eqref{sharp_lem1} and \eqref{sharp_lem2}, respectively.

In order to show \eqref{sharp_d}, using \eqref{L} and \eqref{sharp_lem2}, we calculate
\begin{multline*}
d=\L(\phi)=\frac{1}{p}\|\phi\|_{L^{p}_{Q/p}(\mathbb{G})}^{p}-\frac{1}{q}
\|\phi\|^{q}_{L^{q}(\mathbb{G})}
=\left(\frac{1}{p}-\frac{1}{q}\right)\|\phi\|^{q}_
{L^{q}(\mathbb{G})}=\frac{q-p}{p^{2}}
\int_{\mathbb{G}}|\phi(x)|^{p}dx.
\end{multline*}
Then, it follows that
\begin{equation*}
\int_{\mathbb{G}}|\phi(x)|^{p}dx=\frac{p^{2}}{q-p}d,
\end{equation*}
which is \eqref{sharp_d}.
\end{proof}

\begin{lem}\label{sharp_lemma} Let $T_{\rho, p, q}$ is defined as
\begin{equation}\label{T_rho}
T_{\rho, p, q}:=\inf\left\{\|u\|_{L^{p}_{Q/p}(\mathbb{G})}^{p}:u\in L^{p}_{Q/p}(\mathbb{G}) \;\;{\rm and}\;\; \int_{\mathbb{G}}|u(x)|^{q}dx=\rho\right\},
\end{equation}
for $\rho>0$. If $\phi$ is a minimiser obtained in Theorem \ref{thm1}, then $\phi$ is a minimiser of $T_{\rho_{0}, p, q}$ such that $\rho_{0}=\int_{\mathbb{G}}|\phi(x)|^{q}dx$.
\end{lem}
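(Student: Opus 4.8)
The plan is to show that $T_{\rho_0,p,q}=\|\phi\|_{L^p_{Q/p}(\G)}^p$, i.e.\ that the least energy solution $\phi$ produced in Theorem \ref{thm1} actually realizes the constrained infimum $T_{\rho_0,p,q}$ at the level $\rho_0=\int_{\G}|\phi(x)|^q\,dx$. Since $\phi\in\N$, the Nehari constraint $\I(\phi)=0$ reads $\|\phi\|_{L^p_{Q/p}(\G)}^p=\int_{\G}|\phi(x)|^q\,dx=\rho_0$; hence $\phi$ is admissible for the minimisation problem defining $T_{\rho_0,p,q}$, and therefore $T_{\rho_0,p,q}\le\rho_0$. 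It then remains to establish the reverse inequality, namely that no competitor carrying the same $L^q$-mass $\rho_0$ can have strictly smaller Sobolev norm.

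To this end I would argue by contradiction. Suppose there were $v\in L^p_{Q/p}(\G)$ with $\int_{\G}|v(x)|^q\,dx=\rho_0$ and $\|v\|_{L^p_{Q/p}(\G)}^p<\rho_0$. Then $\I(v)=\|v\|_{L^p_{Q/p}(\G)}^p-\int_{\G}|v(x)|^q\,dx<0$, so Lemma \ref{LM: 2.1} supplies a unique scaling parameter $\mu_v$ with $\mu_v v\in\N$ and, crucially, $0<\mu_v<1$.

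The decisive computation is to evaluate $\L$ on the Nehari set. For any $u\in\N$ the constraint $\I(u)=0$ gives $\|u\|_{L^p_{Q/p}(\G)}^p=\int_{\G}|u(x)|^q\,dx$, whence $\L(u)=\left(\frac1p-\frac1q\right)\int_{\G}|u(x)|^q\,dx$. Applying this with $u=\mu_v v$ and using $\int_{\G}|v(x)|^q\,dx=\rho_0$ gives $\L(\mu_v v)=\left(\frac1p-\frac1q\right)\mu_v^q\rho_0$, while the same identity with $u=\phi$ gives $\L(\phi)=\left(\frac1p-\frac1q\right)\rho_0=d$. Because $0<\mu_v<1$ and $q>p$, we get $\L(\mu_v v)<\left(\frac1p-\frac1q\right)\rho_0=d$, which contradicts $d=\inf\{\L(u):u\in\N\}$ since $\mu_v v\in\N$. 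This forces $T_{\rho_0,p,q}=\rho_0=\|\phi\|_{L^p_{Q/p}(\G)}^p$, so $\phi$ is indeed a minimiser for $T_{\rho_0,p,q}$.

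I expect the only genuinely delicate point to be the strict inequality $\mu_v<1$: the argument degenerates into the trivial bound $T_{\rho_0,p,q}\le\rho_0$ unless one can convert a hypothetical better competitor into a genuine element of $\N$ with energy strictly below $d$. That strictness is precisely what Lemma \ref{LM: 2.1} provides through the sign of $\I(v)$, and the reduction of $\L$ restricted to $\N$ to a fixed multiple of the $L^q$-mass is what turns it into a contradiction with the definition of $d$.
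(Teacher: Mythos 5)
Your proof is correct and follows essentially the same route as the paper: both take an arbitrary competitor with the same $L^q$-mass $\rho_0$, rescale it onto the Nehari set $\N$ via Lemma \ref{LM: 2.1}, and play the resulting energy against the minimality of $d=\L(\phi)$ over $\N$. The only cosmetic difference is that you argue by contradiction using the sign information $0<\mu_v<1$ from the statement of Lemma \ref{LM: 2.1}, whereas the paper computes $\L(\lambda_{0}u)$ directly from the explicit formula for the scaling parameter and deduces $\|\phi\|_{L^{p}_{Q/p}(\G)}^{p}\leq\|u\|_{L^{p}_{Q/p}(\G)}^{p}$ for every competitor $u$.
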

\begin{proof}[Proof of Lemma \ref{sharp_lemma}]
From the definition of $T_{\rho_{0}, p, q}$, one has $\|\phi\|_{L^{p}_{Q/p}(\mathbb{G})}^{p}\geq T_{\rho_{0}, p, q}$. We claim that $T_{\rho_{0}, p, q}\geq\|\phi\|_{L^{p}_{Q/p}(\mathbb{G})}^{p}$. Indeed, using Lemma \ref{LM: 2.1} for any $u\in L^{p}_{Q/p}(\mathbb{G})$ satisfying $\int_{\mathbb{G}}|u(x)|^{q}dx=\int_{\mathbb{G}}|\phi(x)|^{q}dx$ there is a unique
$$\lambda_{0}=\|u\|_{L^{p}_{Q/p}(\mathbb{G})}^{\frac{p}{q-p}}\left(\int_{\G}|u|^{q}dx\right)^{-\frac{1}{q-p}}$$
with $\I(\lambda_{0}u)=0$. Since we know that $\lambda_{0}u\neq0$ and $\phi$ achieves the minimum $d$, then a direct calculation gives
$$\left(\frac{1}{p}-\frac{1}{q}\right)\|\phi\|_{L^{p}_{Q/p}(\mathbb{G})}^{p}=\L(\phi)\leq \L(\lambda_{0}u)=\left(\frac{1}{p}-\frac{1}{q}\right)\lambda_{0}^{p}\|u\|_{L^{p}_{Q/p}(\mathbb{G})}^{p}$$
$$=\left(\frac{1}{p}-\frac{1}{q}\right)\|u\|_{L^{p}_{Q/p}(\mathbb{G})}
^{\frac{p^{2}}{q-p}}\left(\int_{\mathbb{G}}|u(x)|^{q}dx\right)^{-\frac{p}{q-p}}\|u\|_{L^{p}_{Q/p}(\mathbb{G})}^{p}.$$
Then, from $\int_{\mathbb{G}}|u(x)|^{q}dx=\int_{\mathbb{G}}|\phi(x)|^{q}dx$ and $\int_{\mathbb{G}}|\phi(x)|^{q}dx=\|\phi\|_{L^{p}_{Q/p}(\mathbb{G})}^{p}$, we obtain $\|\phi\|_{L^{p}_{Q/p}(\mathbb{G})}^{p}\leq\|u\|_{L^{p}_{Q/p}(\mathbb{G})}^{p}$. From the arbitrariness of the function $u$ one gets $T_{\rho_{0}, p, q}\geq\|\phi\|_{L^{p}_{Q/p}(\mathbb{G})}^{p}$. Thus, by $T_{\rho_{0}, p, q}=\|\phi\|_{L^{p}_{Q/p}(\mathbb{G})}^{p}$, we conclude that $\phi$ is a minimiser of $T_{\rho_{0}, p, q}$.
\end{proof}
Now let us prove Theorem \ref{sharp}.
\begin{proof}[Proof of Theorem \ref{sharp}] For $u\neq0$ we define
\begin{equation}\label{J}
J(u):=q^{q-q/p}\left(\int_{\mathbb{G}}|\mathcal{R}^{\frac{Q}{\nu p}}u(x)|^{p}dx\right)^{\frac{q-p}{p}}
\left(\int_{\mathbb{G}}|u(x)|^{p}dx\right)\left(\int_{\mathbb{G}}|u(x)|^{q}dx\right)^{-1}.
\end{equation}
We estimate the sharp expression $C_{GN, \R}$ by studying the following minimisation problem
$$C_{GN, \R}^{-1}=\inf\{J(u):u\in L^{p}_{Q/p}(\mathbb{G})\ \backslash\{0\}\}.$$
Taking into account $\phi(x)\neq0$ and $\phi \in  L^{p}_{Q/p}(\mathbb{G})$, and using  Lemma \ref{sharp_lem} we get
$$J(\phi)=q^{q-q/p-1}p\left(\frac{q-p}{p}\right)^{\frac{q-p}{p}}
\left(\int_{\mathbb{G}}|\phi(x)|^{p}dx\right)^{\frac{q-p}{p}},$$
which gives that
\begin{equation}\label{sharp_thm001}
C_{GN, \R}^{-1}\leq q^{q-q/p-1}p\left(\frac{q-p}{p}\right)^{\frac{q-p}{p}}
\left(\int_{\mathbb{G}}|\phi(x)|^{p}dx\right)^{\frac{q-p}{p}}.
\end{equation}
Now we obtain a lower estimate for $C_{GN, \R}^{-1}$. We denote $\omega(x):=\lambda u(\mu x)$ for $\lambda, \mu>0$ and for all $u\in L^{p}_{Q/p}(\mathbb{G})\ \backslash\{0\}$. Then, we calculate
\begin{equation}\label{sharp_thm01}
\int_{\mathbb{G}}|\mathcal{R}^{\frac{Q}{\nu p}}\omega(x)|^{p}dx=\lambda^{p}\int_{\mathbb{G}}
|\mathcal{R}^{\frac{Q}{\nu p}}u(x)|^{p}dx,
\end{equation}
$$\int_{\mathbb{G}}|\omega(x)|^{p}dx=\lambda^{p}\mu^{-Q}\int_{\mathbb{G}}
|u(x)|^{p}dx$$
and
$$\int_{\mathbb{G}}|\omega(x)|^{q}dx=\lambda^{q}\mu^{-Q}\int_{\mathbb{G}}|u(x)|^{q}dx.$$
Choosing $\lambda$ and $\mu$ such that
\begin{equation}\label{sharp_thm1}\lambda^{p}\mu^{-Q}\int_{\mathbb{G}}
|u(x)|^{p}dx=\int_{\mathbb{G}}|\phi(x)|^{p}dx
\end{equation}
and
\begin{equation}\label{sharp_thm2}\lambda^{q}\mu^{-Q}\int_{\mathbb{G}}|u(x)|^{q}dx=\int_{\mathbb{G}}|\phi(x)|^{q}dx,\end{equation}
and using \eqref{sharp_lem2}, we obtain
$$\lambda^{q}\mu^{-Q}\int_{\mathbb{G}}|u(x)|^{q}dx=\int_{\mathbb{G}}|\phi(x)|^{q}dx=\frac{q}
{p}\int_{\mathbb{G}}|\phi(x)|^{p}dx$$
$$=\frac{q}
{p}\lambda^{p}\mu^{-Q}\int_{\mathbb{G}}|u(x)|^{p}dx,$$
which implies that
\begin{equation}\label{sharp_thm3}
\lambda^{p}=\left(\frac{q}
{p}\right)^{\frac{p}{q-p}}
\left(\int_{\mathbb{G}}|u(x)|^{p}dx\right)^{\frac{p}{q-p}}
\left(\int_{\mathbb{G}}|u(x)|^{q}dx\right)^{-\frac{p}{q-p}}.
\end{equation}
From \eqref{sharp_thm1} and \eqref{sharp_thm2} one has
$$
\int_{\mathbb{G}}|\omega(x)|^{p}dx=\int_{\mathbb{G}}|\phi(x)|^{p}dx \;\textrm{ and }\;\int_{\mathbb{G}}|\omega(x)|^{q}dx=\int_{\mathbb{G}}|\phi(x)|^{q}dx.
$$
Since $\phi$ is a minimiser of $T_{\rho_{0}, p, q}$ such that $\rho_{0}=\int_{\mathbb{G}}|\phi(x)|^{q}dx$, then using Lemma \ref{sharp_lemma}, we have
$$\int_{\mathbb{G}}|\mathcal{R}^{\frac{Q}{\nu p}}\omega(x)|^{p}dx\geq
\int_{\mathbb{G}}|\mathcal{R}^{\frac{Q}{\nu p}}\phi(x)|^{p}dx.$$
Then by \eqref{sharp_thm01} and \eqref{sharp_lem1}, one calculates
$$\lambda^{p}\int_{\mathbb{G}}
|\mathcal{R}^{\frac{Q}{\nu p}}u(x)|^{p}dx=\int_{\mathbb{G}}|\mathcal{R}^{\frac{Q}{\nu p}}\omega(x)|^{p}dx$$
$$\geq\int_{\mathbb{G}}|\mathcal{R}^{\frac{Q}{\nu p}}\phi(x)|^{p}dx=
\frac{q-p}{p}\int_{\mathbb{G}}|\phi(x)|^{p}dx.$$
Plugging here \eqref{sharp_thm3}, we obtain that
$$\left(\frac{q}
{p}\right)^{\frac{p}{q-p}}
\left(\int_{\mathbb{G}}|u(x)|^{p}dx\right)^{\frac{p}{q-p}}$$
$$\times\left(\int_{\mathbb{G}}|u(x)|^{q}dx\right)^{-\frac{p}{q-p}}
\int_{\G}|\mathcal{R}^{\frac{Q}{\nu p}}u(x)|^{p}dx\geq \frac{q-p}{p}
\int_{\mathbb{G}}|\phi(x)|^{p}dx.$$
Now, by the definition of $J(u)$ in \eqref{J}, we arrive at
$$J(u)\geq q^{q-q/p-1}p\left(\frac{q-p}{p}\right)^{\frac{q-p}{p}}
\left(\int_{\mathbb{G}}|\phi(x)|^{p}dx\right)^{\frac{q-p}{p}}.$$
Using again the arbitrariness of the function $u$, we get
\begin{equation}\label{sharp_thm5}C_{GN, \R}^{-1}\geq q^{q-q/p-1}p\left(\frac{q-p}{p}\right)^{\frac{q-p}{p}}
\left(\int_{\mathbb{G}}|\phi(x)|^{p}dx\right)^{\frac{q-p}{p}}.
\end{equation}
Combining \eqref{sharp_thm001} and \eqref{sharp_thm5}, one obtains the first equality in \eqref{sharp1}. Then, this first equality with \eqref{sharp_d} imply the second equality in \eqref{sharp1}.
\end{proof}

\end{document}